        \newtheorem{thm}{Theorem}[section]
          \newtheorem{cor}[thm]{Corollary}
          \newtheorem{lem}[thm]{Lemma}
          \newtheorem{prop}[thm]{Proposition}
        \theoremstyle{definition}
          \newtheorem{rem}{Remark}
          \newcommand\M{{\mathcal M}}
          \newcommand\E{{\mathcal E}}
          \newcommand\N{{\mathcal N}}
           \newcommand\F{{\mathcal F}}
           \newcommand\G{{\mathcal G}}
          \newcommand\im{\mathrm{Im}}
          \newcommand\Pic{\mathrm{Pic}}
          \newcommand\W{\mathcal W}
          \newcommand\oo{\mathcal O}
          \newcommand\Z{\mathbb{Z}}
          \newcommand\Ext{\mathrm{Ext}}
          \newcommand\Hom{\mathrm{Hom}}
          \newcommand\Cliff{\mathrm{Cliff}}
          \newcommand\rk{\mathrm{rk}}
          \newcommand\NL{\mathcal{NL}}
           \newcommand\U{\mathcal U}
          \newcommand\K{\mathcal K}
\title
{Stability of rank-$3$ Lazarsfeld-Mukai bundles on $K3$ surfaces}
\author{Margherita Lelli--Chiesa}
\address{Humboldt Universit\"at zu Berlin, Institut f\"ur Mathematik, 10099 Berlin}
\email{lelli@math.hu-berlin.de}
\begin{document}
\begin{abstract}
Given an ample line bundle $L$ on a $K3$ surface $S$, we study the slope stability with respect to $L$ of rank-$3$ Lazarsfeld-Mukai bundles associated with complete, base point free nets of type $g^2_d$ on curves $C$ in the linear system $\vert L\vert$. When $d$ is large enough and $C$ is general, we obtain a dimensional statement for the variety $W^2_d(C)$. If the Brill-Noether number is negative, we prove that any $g^2_d$ on any smooth, irreducible curve in $\vert L\vert$ is contained in a $g^r_e$ which is induced from a line bundle on $S$, thus answering a conjecture of Donagi and Morrison. Applications towards transversality of Brill-Noether loci and higher rank Brill-Noether theory are then discussed.
\end{abstract}
\maketitle
{\noindent2010 Mathematical Subject Classification: 14C20, 14H51, 14J28}

\section{Introduction and statement of the results}
Many results of Brill-Noether theory regarding a general point in the moduli space $M_g$, which parametrizes isomorphism classes of smooth, irreducible curves of genus $g$, have been proved by studying curves lying on $K3$ surfaces. One of the advantages of considering an irreducible curve $C\subset S$, where $S$ is a smooth $K3$ surface, is that some interesting properties, such as the Clifford index, do not change while moving $C$ in its linear system (cf. \cite{green}). Moreover, Brill-Noether theory on $C$ is strictly connected with the geometry of some moduli spaces of vector bundles on the $K3$ surface.  Indeed, given a complete, base point free linear series $A$ on $C$, one associates with the pair $(C,A)$ a vector bundle on $S$,  the so-called Lazarsfeld-Mukai bundle, denoted by $E_{C,A}$.

Lazarsfeld-Mukai bundles were first used by Lazarsfeld, in order to show that, given a $K3$ surface $S$ such that $\Pic(S)=\Z\cdot L$, a general curve $C\in\vert L\vert$ satisfies the Gieseker-Petri Theorem, that is, for any line bundle $A\in \Pic(C)$ the Petri map
$$
\mu_{0,A}:H^0(C,A)\otimes H^0(C,\omega_C\otimes A^\vee)\to H^0(C,\omega_C)
$$
is injective (cf. \cite{lazarsfeld}, \cite{pareschi}, or \cite{lazarsfeld1} for a more geometric argument). 

It is natural to investigate what happens if the Picard number of $S$ is greater than $1$. In order to do this, having denoted by $\vert L\vert_s$ the locus of smooth, connected curves in the linear system $\vert L\vert$ and chosen two positive integers $r,d$, one studies the natural projection $\pi:\W^r_d(\vert L\vert)\to \vert L\vert_s$, whose fibre over $C$ coincides with the Brill-Noether variety $W^r_d(C)$. We set $g:=1+L^2/2$; this coincides with the genus of curves in $\vert L\vert_s$.

At first we look at the cases where $\rho(g,r,d)<0$. Following \cite{donagi}, we say that a line bundle $M$ is {\em adapted} to $\vert L\vert$ whenever 
\renewcommand{\theenumi}{\alph{enumi}}
\begin{enumerate}
\item[\em (i)]\label{pe2} $h^0(S,M)\ge2$, $h^0(S,L\otimes M^\vee)\geq 2$,
\item[\em (ii)]\label{pe4} $h^0(C,M\otimes\oo_{C})$ is independent of the curve $C\in\vert L\vert _s$.
\end{enumerate}
Conditions (i) and (ii) ensure that $M\otimes \oo_C$ contributes to the Clifford index of $C$ and $\Cliff(M\otimes \oo_C)$ is the same for any $C\in\vert L\vert_s$.

Donagi and Morrison (\cite{donagi} Theorem (5.1')) proved that, if $A$ is a complete, base point free pencil $g^1_d$  on a nonhyperelliptic curve $C\in\vert L\vert_s$ and $\rho(g,1,d)<0$, then $\vert A\vert$ is contained in the restriction to $C$ of a line bundle $M\in\Pic(S)$ which is adapted to $\vert L\vert$ and such that $\Cliff(M\otimes \oo_C)\leq \Cliff(A)$. The same is expected to hold true for any linear series of type $g^r_d$ with $\rho(g,r,d)<0$ (compare with \cite{donagi} Conjecture (1.2)). We prove this conjecture for $r=2$ under some mild hypotheses on $L$.
\begin{thm}\label{thm:magari}
Let $S$ be a $K3$ surface and $L\in\Pic(S)$ be an ample line bundle such that a general curve in $\vert L\vert$ has genus $g$, Clifford dimension $1$ and maximal gonality $k=\left\lfloor \frac{g+3}{2}\right\rfloor$. Let $A$ be a complete, base point free $g^2_d$ on a curve $C\in\vert L\vert_s$ such that $\rho(g,2,d)<0$. 

Then, there exists $M\in\Pic(S)$ adapted to $\vert L\vert$ such that the linear system $\vert A\vert$ is contained in $\vert M\otimes \oo_C\vert$ and $\Cliff(M\otimes \oo_C)\leq \Cliff(A)$. Moreover, one has $c_1(M)\cdot C\leq (4g-4)/3$.
\end{thm}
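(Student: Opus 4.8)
The plan is to deduce the statement from the instability of the rank-$3$ Lazarsfeld--Mukai bundle attached to $(C,A)$. First I would form $E:=E_{C,A}$, which is locally free of rank $3$, globally generated off a finite set, with $\det E=L$ and $c_2(E)=d$, and fits into
$$0\longrightarrow H^0(C,A)^\vee\otimes\oo_S\longrightarrow E\longrightarrow \iota_*\!\left(\omega_C\otimes A^\vee\right)\longrightarrow 0,\qquad \iota\colon C\hookrightarrow S.$$
A Riemann--Roch computation yields the Mukai vector $v(E)=(3,L,g+2-d)$, so that $\langle v(E),v(E)\rangle=L^2-6(g+2-d)=2\rho(g,2,d)-2$. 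Since any $\mu_L$-stable sheaf $F$ on a $K3$ surface is simple and satisfies $\Ext^2(F,F)\cong\Hom(F,F)^\vee$, one has $\langle v(F),v(F)\rangle=\dim\Ext^1(F,F)-2\ge-2$; as $\rho(g,2,d)<0$ forces $\langle v(E),v(E)\rangle\le-4$, the bundle $E$ is not $\mu_L$-stable.

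Next I would pick a saturated subsheaf $\F\subsetneq E$ with $\mu_L(\F)\ge\mu_L(E)=(2g-2)/3$, chosen of maximal slope and then of maximal rank, so that $\F$ is $\mu_L$-semistable and $E/\F$ is torsion free; then $\rk\F\in\{1,2\}$. If $\rk\F=1$ then $\F$ is a line bundle; if $\rk\F=2$ write $E/\F\cong N'\otimes I_\xi$ for a line bundle $N'$ and a zero-dimensional $\xi\subset S$. In either case one obtains a genuine line bundle on $S$ sitting as a sub-line bundle of $E$ and one sitting as a line-bundle quotient; restricting the pertinent exact sequence to $C$, where the $\mathrm{Tor}_1$ term is a twist of $A$ (resp. of $\omega_C\otimes A^\vee$) by an effective divisor, shows that one of these, call it $M$, satisfies $\vert A\vert\subseteq\vert M\otimes\oo_C\vert$. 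The slope inequality satisfied by $\F$, read on both $M$ and its complement $L\otimes M^\vee$, together with the effectivity coming from $h^0(S,M)\ge2$ and $h^0(S,L\otimes M^\vee)\ge2$ and the Hodge index theorem on $S$, then yields the numerical bound $c_1(M)\cdot C\le(4g-4)/3$.

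It remains to check that $M$ is adapted to $\vert L\vert$ and that $\Cliff(M\otimes\oo_C)\le\Cliff(A)=d-4$. Conditions (i) follow from $\vert A\vert\subseteq\vert M\otimes\oo_C\vert$, $h^0(C,A)=3$ and the cohomology sequences of $0\to L\otimes M^\vee\to L\to M\otimes\oo_C\to0$ and of its Serre dual, using $H^1(S,L^\vee)=H^2(S,L^\vee)=0$. For condition (ii) --- the independence of $h^0(C,M\otimes\oo_C)$ from $C\in\vert L\vert_s$ --- and for the Clifford comparison, I would confront the pair $\bigl(\deg(M\otimes\oo_C),h^0(M\otimes\oo_C)\bigr)$ with $(d,3)$, using the degree bound above, Clifford's theorem, and the hypotheses that the general curve of $\vert L\vert$ has Clifford dimension $1$ and maximal gonality $k=\left\lfloor \frac{g+3}{2}\right\rfloor$: these exclude unexpected linear series of small Clifford index on $C$, forcing $h^0(C,M\otimes\oo_C)$ to be computed by $H^0(S,M)$ uniformly in $C$. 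Donagi--Morrison's Theorem $(5.1')$, applied to the base-point-free part of a general pencil contained in $\vert A\vert$, can be used to dispose of the borderline configurations in which $M\otimes\oo_C$ would otherwise fail the Clifford estimate.

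The main obstacle is the destabilizing analysis itself: deciding which of the sub- and quotient line bundles produced by $\F$ both contains $\vert A\vert$ and has the correct numerical type, and proving it is \emph{effective} on $S$ with uniform restriction to every smooth member of $\vert L\vert$. The case $\rk\F=2$ is the delicate one, since there one must also control the length and position of $\xi$; and it is precisely in excluding the pathological numerical types --- equivalently, auxiliary low-Clifford-index series on $C$ not induced from $S$ --- that the hypotheses on the Clifford dimension and on the gonality of the general curve of $\vert L\vert$ are indispensable. Once the right $M$ has been pinned down, by contrast, both the bound $c_1(M)\cdot C\le(4g-4)/3$ and the inequality $\Cliff(M\otimes\oo_C)\le\Cliff(A)$ are formal consequences of the slope inequalities and of adaptedness.
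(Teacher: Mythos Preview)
Your overall framework is right --- non-simplicity forces $E=E_{C,A}$ to be non-$\mu_L$-stable, and the sought line bundle $M$ should come from the resulting filtration --- but the core of the paper's argument is precisely the step you flag as ``the main obstacle'' and then leave open.

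The paper does \emph{not} try to handle both ranks of the maximal destabilizing sheaf $\F$. Instead it proves (Lemma~5.1, Lemma~5.4, Corollary~5.5) that under $\rho(g,2,d)<0$ only one Harder--Narasimhan type can occur: the maximal destabilizing sheaf is a line bundle $N$ and $E/N$ is torsion free, $\mu_L$-\emph{stable}, of rank~$2$. The other types are excluded numerically: if $\F$ has rank~$2$ and is $\mu_L$-stable, then $\langle v(\F),v(\F)\rangle\ge-2$ gives $c_2(\F)\ge \tfrac32+\tfrac14 c_1(\F)^2$, which combined with $c_1(\F)\cdot c_1(E/\F)\ge k$ (coming from the Clifford index) forces $d\ge \tfrac34 k+\tfrac76+\tfrac{g}{3}$; and if the filtration has length~$3$ one gets $d\ge \tfrac32 k$. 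Both bounds exceed $\tfrac23 g+2$, contradicting $\rho(g,2,d)<0$. This exclusion is the substance of the proof; your proposal does not contain it, and without it you cannot guarantee the existence of a sub-line-bundle $N$ of slope $\ge (2g-2)/3$ with $E/N$ stable.

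Once one knows $E/N$ is $\mu_L$-stable, the rest is not just ``formal'': the paper takes $M=\det(E/N)=L\otimes N^\vee$ and uses stability of $E/N$ twice more (Proposition~7.2). First, $\langle v(E/N),v(E/N)\rangle\ge-2$ yields $c_2(E/N)\ge 2$ and $c_1(M)^2\ge 2$, which gives $\Cliff(M\otimes\oo_C)\le d-4$. Second, $c_1(M)^2>0$ together with stability rules out $(E/N)^{\vee\vee}\simeq\oo_S(\Sigma)^{\oplus2}$ for an elliptic curve $\Sigma$, whence $h^1(S,M)=0$; this is what gives condition~(ii) of adaptedness, not a Clifford-theorem argument on $C$. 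Your route to (ii) via ``forcing $h^0(C,M\otimes\oo_C)$ to be computed by $H^0(S,M)$'' and an appeal to Donagi--Morrison $(5.1')$ is not a proof. Finally, the inclusion $\vert A\vert\subset\vert M\otimes\oo_C\vert$ is obtained simply by observing that $N\hookrightarrow E\twoheadrightarrow \omega_C\otimes A^\vee$ is nonzero since $h^2(S,N)=0$; no restriction-to-$C$/Tor computation is needed, and the degree bound $c_1(M)\cdot C\le (4g-4)/3$ is just $\mu_L(E/N)\le\mu_L(E)$ --- the Hodge index theorem plays no role.
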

We recall that the assertion that $\vert A\vert$ is contained in $\vert M\otimes \oo_C\vert$ is equivalent to the requirement $h^0(C, A^\vee\otimes M\otimes \oo_C)>0$.
The assumption on the gonality $k$ is used for computational reasons; however, the methods of our proof might be adapted in order to treat the cases where $k$ is not maximal. It was proved by Ciliberto and Pareschi (cf. \cite{ciliberto} Proposition 3.3) that the ampleness of $L=\oo_S(C)$ forces $C$ to have Clifford dimension $1$ with only one exception  occurring for $g=10$.

The case of pencils is very particular, since it involves vector bundles of rank $2$. Donagi and Morrison used the fact that any non-simple, indecomposable Lazarsfeld-Mukai bundle of rank $2$ can be expressed as an extension of the image and the kernel of a nilpotent endomorphism, which both have rank $1$. Their proof cannot be adapted to linear series with $r>1$, corresponding to Lazarsfeld-Mukai bundles of rank at least $3$. Our techniques consist of showing that, under the hypotheses of Theorem \ref{thm:magari}, the rank-$3$ Lazarsfeld-Mukai bundle $E=E_{C,A}$ is given by an extension
$$
0\to N\to E\to E/N\to 0,$$
where $N\in\Pic(S)$ and $E/N$ is a $\mu_L$-stable, torsion free sheaf of rank $2$. When $E$ is $\mu_L$-unstable, the line bundle $N$ coincides with its maximal destabilizing sheaf and the determinant of $E/N$ plays the role of the line bundle $M$ in the statement. Something similar happens if $E$ is properly $\mu_L$-semistable. 

This suggests that the notion of stability might play a fundamental role in a general proof of the Donagi-Morrison Conjecture. \vspace{0.3cm}
 
Now, we turn our attention to the cases where $\rho(g,r,d)\geq 0$. In the course of their proof of Green's Conjecture for curves on arbitrary $K3$ surfaces, Aprodu and Farkas (cf. \cite{aprodu}) showed that, if $L$ is an ample line bundle on a $K3$ surface such that a general curve $C\in\vert L\vert$ has Clifford dimension $1$ and gonality $k$, given $d>g-k+2$, any dominating component of $\W^1_d(\vert L\vert)$ corresponds to simple Lazarsfeld-Mukai bundles. In particular, when the gonality is maximal this ensures that, if $C$ is general in its linear system and the Brill-Noether number $\rho(g,1,d)$ is positive, the variety $W^1_d(C)$ is reduced and of the expected dimension. In the case $\rho(g,1,d)=0$, one finds that $W^1_d(C)$ is $0$-dimensional, even though not necessarily reduced. 

It is natural to wonder to what extent such a result can be expected to hold for linear series of type $g^r_d$ with $r>1$. We prove the following theorem.
\begin{thm}\label{thm:principale}
Let $S$ be a $K3$ surface and $L\in\Pic(S)$ be an ample line bundle such that a general curve in $\vert L\vert$ has genus $g$, Clifford dimension $1$ and maximal gonality $k=\left\lfloor \frac{g+3}{2}\right\rfloor$. Fix a positive integer $d$ such that $\rho(g,2,d)\geq 0$ and assume $(g,d)\not\in\{(2,4),(4,5),(6,6),(10,9)\}$.
Then, the following hold:
\renewcommand{\theenumi}{\alph{enumi}}
\begin{enumerate}
\item\label{aa} If $d> \frac{3}{4}g+2$, no dominating component of $\mathcal{W}^2_d(\vert L\vert)$ corresponds to rank-$3$ Lazarsfeld-Mukai bundles which are not $\mu_L$-stable.
\item\label{bb} If $d\leq\frac{3}{4}g+2$, let $\W$ be a dominating component of $\W^2_d(\vert L\vert)$ that corresponds to Lazarsfeld-Mukai bundles which are not $\mu_L$-stable. Then, there exists $M\in\Pic(S)$ adapted to $\vert L\vert$ such that, for a general $(C,A)\in\W$, the linear system $\vert A\vert$ is contained in $\vert M\otimes \oo_C\vert$ and $\Cliff(M\otimes \oo_C)\leq \Cliff(A)$.  Moreover, $c_1(M)\cdot C\leq (4g-4)/3$.
\end{enumerate}
\end{thm}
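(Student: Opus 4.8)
\medskip
\noindent\textbf{Strategy of the proof.}
The plan is to adapt the argument behind Theorem~\ref{thm:magari}, replacing the hypothesis $\rho(g,2,d)<0$ by the assumption that $\W$ is a dominating component. So suppose $\W$ is a dominating component of $\W^2_d(\vert L\vert)$ whose general member $(C,A)$ corresponds to a rank-$3$ Lazarsfeld-Mukai bundle $E=E_{C,A}$ which is \emph{not} $\mu_L$-stable. Recall that $E$ has $\rk E=3$, $\det E=L$, $c_2(E)=d$, $h^0(S,E^\vee)=h^1(S,E)=h^2(S,E)=0$, $h^0(S,E)=h^0(C,A)+h^1(C,A)=\chi(E)$, that $E$ is globally generated off the base locus of $\vert A\vert$, and that $\mu_L(E)=(2g-2)/3$. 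The first and decisive step is to show that the failure of $\mu_L$-stability forces $E$ into an exact sequence
$$
0\to N\to E\to E/N\to 0
$$
with $N\in\Pic(S)$, $N\cdot L\ge(2g-2)/3$, and $E/N$ a torsion-free, $\mu_L$-(semi)stable sheaf of rank $2$; here $N$ is the maximal destabilizing subsheaf of $E$, with $N\cdot L>(2g-2)/3$, when $E$ is $\mu_L$-unstable, while $N\cdot L=(2g-2)/3$ when $E$ is properly $\mu_L$-semistable. This is exactly where the ampleness of $L$, the hypothesis on the Clifford dimension and the maximality of the gonality $k=\lfloor(g+3)/2\rfloor$ enter: I would rule out that a destabilizing subsheaf of $E$ has rank $2$, or that the Harder-Narasimhan filtration of $E$ has three steps, by confronting the numerical identity $c_1(E)^2-2\,\rk(E)\,c_2(E)=2g-2-6d$ with the Bogomolov inequality and the Hodge index theorem on $S$, and using $h^0(S,E^\vee)=0$ together with the genericity of $(C,A)$ inside $\W$. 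The pairs $(g,d)\in\{(2,4),(4,5),(6,6),(10,9)\}$ excluded from the statement are precisely those for which this reduction degenerates (for $g=10$ already because a general curve in $\vert L\vert$ need not have Clifford dimension $1$).

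\medskip
Granting this extension, I would set $M:=\det(E/N)=L\otimes N^\vee$, so that $L=N\otimes M$ and, from $N\cdot L\ge(2g-2)/3$,
$$
c_1(M)\cdot C=M\cdot L=(2g-2)-N\cdot L\le\frac{4g-4}{3},
$$
which is the last inequality claimed. It then remains to verify, just as in the proof of Theorem~\ref{thm:magari}, that $M$ is adapted to $\vert L\vert$, that $\vert A\vert\subseteq\vert M\otimes\oo_C\vert$, and that $\Cliff(M\otimes\oo_C)\le\Cliff(A)=d-4$. In outline: a cohomological analysis of the extension gives $h^0(S,M)\ge2$ and $h^0(S,L\otimes M^\vee)=h^0(S,N)\ge2$, which is condition~(i); the vanishing $h^1(S,N)=0$ then forces $h^0(C,M\otimes\oo_C)=h^0(S,M)$ for every $C\in\vert L\vert_s$, which is condition~(ii); and restricting $0\to N\to E\to E/N\to 0$ to a general $C$ and comparing it with the canonical sequences defining $E_{C,A}\otimes\oo_C$ yields $h^0(C,A^\vee\otimes M\otimes\oo_C)>0$, hence $\vert A\vert\subseteq\vert M\otimes\oo_C\vert$, after which the slope inequality $N\cdot L\ge(2g-2)/3$ gives $\Cliff(M\otimes\oo_C)\le\Cliff(A)$. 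This establishes assertion~\eqref{bb}.

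\medskip
For assertion~\eqref{aa} I would argue by contradiction. Assume a dominating component $\W$ as above exists with $d>\tfrac{3}{4}g+2$. By the previous step there is then a \emph{single} $M\in\Pic(S)$, adapted to $\vert L\vert$ and with $c_1(M)\cdot C\le(4g-4)/3$, such that for a general $(C,A)\in\W$ the net $\vert A\vert$ is cut out on $C$ inside the fixed linear system $\vert M\otimes\oo_C\vert$, whose dimension $h^0(S,M)-1$ does not depend on $C$. Hence $\W$ is contained in the locus $\Sigma$ of pairs $(C,A)$ with $C\in\vert L\vert_s$ and $A$ a complete, base point free $g^2_d$ satisfying $\vert A\vert\subseteq\vert M\otimes\oo_C\vert$; any such $A$ is obtained from $\vert M\otimes\oo_C\vert$ by removing an effective base divisor of degree $c_1(M)\cdot C-d$. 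Parametrizing $\Sigma$ this way, and using $c_1(M)\cdot C\le(4g-4)/3$ together with the bound on $h^0(S,M)$ coming from $\Cliff(M\otimes\oo_C)\ge\Cliff(C)=\lfloor(g-1)/2\rfloor$ (available because $M$ is adapted), I expect to obtain $\dim\Sigma<\dim\vert L\vert_s+\rho(g,2,d)$ precisely in the range $d>\tfrac{3}{4}g+2$. On the other hand, since $\W$ dominates $\vert L\vert_s$ one has $\dim\W\ge\dim\vert L\vert_s+\rho(g,2,d)$, and this contradiction shows that no such component exists.

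\medskip
The hard part will be the first step: proving that non-$\mu_L$-stability of the rank-$3$ bundle $E$ is always witnessed by a \emph{line} subbundle $N$ of the prescribed slope, rather than by a rank-$2$ destabilizing subsheaf or by a three-step Harder-Narasimhan filtration. This is the technical core that Theorem~\ref{thm:principale} shares with Theorem~\ref{thm:magari}, and it rests on a delicate interplay between the Bogomolov inequality, the Hodge index theorem and the cohomological constraints satisfied by Lazarsfeld-Mukai bundles; it is also where the exceptional low-genus pairs are singled out. Once this structural statement is in place, the verification that $M$ is adapted and satisfies the Donagi-Morrison inequalities (assertion~\eqref{bb}) and the dimension count excluding large $d$ (assertion~\eqref{aa}) should be comparatively routine.
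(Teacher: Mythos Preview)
Your proposal correctly identifies the structural heart of the argument—the extension $0\to N\to E\to E/N\to 0$ with $N$ a line bundle—and the subsequent verification that $M=L\otimes N^\vee$ is adapted and satisfies the Donagi--Morrison inequalities is essentially the content of Proposition~\ref{prop:vai}. However, the route you sketch to reach this extension is not the one the paper takes, and it cannot succeed as stated.

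The paper does \emph{not} prove that for a general $(C,A)$ in a dominating component the bundle $E$ is always destabilized by a line bundle. Instead, Section~\ref{tempo} lists all possible Harder--Narasimhan and Jordan--H\"older patterns for a non-$\mu_L$-stable rank-$3$ Lazarsfeld--Mukai bundle: a $\mu_L$-stable rank-$2$ subbundle, a $\mu_L$-stable rank-$2$ quotient (your desired case), or a length-three filtration by line bundles. Sections~\ref{mare}, \ref{section:cola}, \ref{spiaggia} then carry out, for each pattern separately, a moduli count via Artin stacks of filtered sheaves (the stacks $\E_{N,l}$, $\F_N$, $\F_{N,N_2,l_2}$), bounding the dimension of the corresponding locus inside $\W^2_d(\vert L\vert)$. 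Assertion~\eqref{aa} follows because for $d>\tfrac{3}{4}g+2$ \emph{each} of the three bounds (Propositions~\ref{prop:cambridge}, \ref{prop:fiducia}, \ref{prop:nutella}) falls strictly below $g+\rho(g,2,d)$. Assertion~\eqref{bb} follows because for $d\le\tfrac{3}{4}g+2$ Propositions~\ref{prop:cambridge} and~\ref{prop:nutella} still exclude the rank-$2$-subbundle and length-three patterns from dominating, leaving only the line-bundle-destabilizer case, to which Proposition~\ref{prop:vai} applies. The excluded pairs $(g,d)$ arise in the length-three count (Proposition~\ref{prop:nutella}), not in the reduction you describe.

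The gap in your plan is that Bogomolov and Hodge index alone cannot rule out a rank-$2$ destabilizing subsheaf or a three-step filtration. What these tools yield (Lemmas~\ref{lem:det} and~\ref{lem:caso3}) are \emph{lower bounds} on $d$—roughly $d\ge\tfrac{3}{4}k+\tfrac{7}{6}+\tfrac{g}{3}$ and $d\ge\tfrac{3}{2}k$—when such patterns occur, not their exclusion for large $d$. Hence for~\eqref{aa} one cannot first reduce to a line-bundle destabilizer and then bound your locus $\Sigma$; the other two patterns must be eliminated by their own dimension counts, and these counts are precisely what the paper supplies. Your parameter count for $\Sigma$ is close in spirit to Proposition~\ref{prop:fiducia}, though the paper bounds extensions directly via $\chi(E/N,N)$ rather than parametrizing sub-linear-series of $\vert M\otimes\oo_C\vert$. (A small slip: adaptedness condition~(ii) needs $h^1(S,M)=0$, not $h^1(S,N)=0$; the paper obtains this in the proof of Proposition~\ref{prop:vai} by excluding the case $c_1(M)^2=0$ via stability of $E/N$.)
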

Unlike case (\ref{aa}), case (\ref{bb}) does not exclude the existence of dominating components of $\W^2_d(\vert L\vert)$ which correspond to either $\mu_L$-stable or properly $\mu_L$-semistable Lazarsfeld-Mukai bundles. However, general points of such a component $\W$ give nets $g^2_d$, which are all contained in the restriction of the same line bundle $M\in \Pic(S)$ to curves in $\vert L\vert$. Furthermore, the Clifford index of $M\otimes \oo_C$ is the same for any $C\in\vert L\vert_s$ and does not exceed $d-4$. 

For a curve $C\in\vert L\vert_s$ and for a fixed value of $d$, we define the variety
$$
\widetilde{W}^2_d(C):=\{A\in W^2_d(C)\,\,\vert\,\, A\textrm{ is base point free}\},
$$
which is an open subscheme of $W^2_d(C)$, not necessarily dense. The following result is a direct consequence of Theorem \ref{thm:principale}.
\begin{cor}
Under the same hypotheses of Theorem \ref{thm:principale}, for a general $C\in\vert L\vert_s$ the following hold. 
\renewcommand{\theenumi}{\alph{enumi}}
\begin{enumerate}
\item If $d> \frac{3}{4}g+2$, the variety $\widetilde{W}^2_d(C)$ is reduced of the expected dimension $\rho(g,2,d)$.
\item If $d\leq\frac{3}{4}g+2$, let $W$ be an irreducible component of $\widetilde{W}^2_d(C)$ which either is non-reduced or has dimension greater than $\rho(g,2,d)$. Then, there exists an effective divisor $D\subset S$ such that $\oo_S(D)$ is adapted to $\vert L\vert$ and, for a general $A\in W$, the linear system $\vert A\vert$ is contained in $\vert \oo_C(D)\vert$ and $\Cliff(\oo_C(D))\leq \Cliff(A)$.
\end{enumerate}
\end{cor}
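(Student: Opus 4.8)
The plan is to deduce the corollary directly from Theorem~\ref{thm:principale}, using in addition only the standard fact that if $A$ is a base point free $g^2_d$ on $C$ whose associated rank-$3$ Lazarsfeld--Mukai bundle $E_{C,A}$ is \emph{simple}, then $(C,A)$ is a smooth point of $\W^2_d(\vert L\vert)$ at which this scheme has the expected dimension $g+\rho(g,2,d)$; this holds because the obstructedness of $\W^2_d(\vert L\vert)$ at $(C,A)$ is measured by the failure of injectivity of the Petri map $\mu_{0,A}$, whose cokernel Lazarsfeld's argument identifies with a subspace of $\Hom(E_{C,A},E_{C,A})/\mathbb C$. Since $\mu_L$-stable sheaves are simple, I would first record that any \emph{dominating} component $\W$ of $\W^2_d(\vert L\vert)$ whose general Lazarsfeld--Mukai bundle is $\mu_L$-stable is generically smooth of dimension $g+\rho(g,2,d)$; consequently, by generic smoothness of the projection $\pi:\W\to\vert L\vert_s$, the fibre of $\W$ over a general $C\in\vert L\vert_s$ is smooth---hence reduced---of pure dimension $\rho(g,2,d)$. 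I would also observe that, because $C$ is general, every irreducible component of $\widetilde{W}^2_d(C)$ is a component of the fibre over $C$ of some dominating component of $\W^2_d(\vert L\vert)$, the non-dominating ones having empty fibre over a general curve, and that by generic flatness such a fibre component has dimension $\dim\W-g$.

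For part (a), where $d>\frac{3}{4}g+2$, Theorem~\ref{thm:principale}(\ref{aa}) asserts that no dominating component of $\W^2_d(\vert L\vert)$ corresponds to non-$\mu_L$-stable rank-$3$ Lazarsfeld--Mukai bundles, so every dominating component is of the type considered above. Applying the first paragraph simultaneously to the finitely many dominating components, I would conclude that for a general $C$ the scheme $\widetilde{W}^2_d(C)$ is reduced of the expected dimension $\rho(g,2,d)$.

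For part (b), where $d\le\frac{3}{4}g+2$, I would take an irreducible component $W$ of $\widetilde{W}^2_d(C)$ ($C$ general) that is either non-reduced or of dimension exceeding $\rho(g,2,d)$, and let $\W$ be a dominating component of $\W^2_d(\vert L\vert)$ having $W$ as a component of its fibre over $C$. If the general Lazarsfeld--Mukai bundle of $\W$ were $\mu_L$-stable, the first paragraph would force $W$ to be reduced of dimension $\rho(g,2,d)$, a contradiction; hence $\W$ corresponds to Lazarsfeld--Mukai bundles that are not $\mu_L$-stable, and Theorem~\ref{thm:principale}(\ref{bb}) supplies $M\in\Pic(S)$ adapted to $\vert L\vert$ with $\vert A\vert\subseteq\vert M\otimes\oo_C\vert$ and $\Cliff(M\otimes\oo_C)\le\Cliff(A)$ for a general $(C,A)\in\W$. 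A routine dimension count---the locus of $\W$ where these conclusions fail is a proper closed subset, so for a general $C$ its dominating pieces meet the fibre over $C$ in dimension strictly less than $\dim W$ and therefore cannot contain $W$---transfers them to a general $A\in W$. Finally, $M$ being adapted, we have $h^0(S,M)\ge2>0$, so $M\cong\oo_S(D)$ for some effective divisor $D\subset S$; then $\oo_S(D)$ is adapted to $\vert L\vert$, $\vert A\vert\subseteq\vert\oo_C(D)\vert$ for a general $A\in W$, and $\Cliff(\oo_C(D))\le\Cliff(A)$, which is what part (b) asserts.

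The one step that is not purely formal is the deformation-theoretic input of the first paragraph---the implication ``$E_{C,A}$ simple $\Rightarrow$ $(C,A)$ is an unobstructed point of $\W^2_d(\vert L\vert)$ of the expected dimension''; this is the higher-rank counterpart of Lazarsfeld's argument for Gieseker--Petri, and once it is in hand the corollary genuinely is a direct consequence of Theorem~\ref{thm:principale}.
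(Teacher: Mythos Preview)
Your approach matches the paper's, which presents the corollary as a direct consequence of Theorem~\ref{thm:principale} together with the deformation-theoretic facts recalled in Section~\ref{background1}. However, your justification of the implication ``$E_{C,A}$ simple $\Rightarrow$ $\mu_{0,A}$ injective'' is imprecise: from the exact sequence~(\ref{nuvole}) one obtains that $\Hom(E_{C,A},E_{C,A})/\mathbb{C}$ is a subspace of $\ker\mu_{0,A}$, not conversely (and you wrote ``cokernel'' where ``kernel'' is surely meant). Simplicity of $E_{C,A}$ alone therefore yields only $\dim\ker\mu_{0,A}\le 1$, the possible one-dimensional excess being detected by the coboundary map, which the paper identifies (up to scalar) with the composite $\mu_{1,A,S}$ of the Gaussian map with the transposed Kodaira--Spencer map. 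The paper supplies the missing ingredient via Sard's Lemma: for a \emph{dominating} component $\W$ and a \emph{general} $C\in\vert L\vert_s$, one has $\mu_{1,A,S}=0$ at every point of the fibre, and only then is simplicity of $E_{C,A}$ equivalent to injectivity of $\mu_{0,A}$. Since you apply the implication precisely in this setting (general $C$, dominating $\W$), your argument goes through once this correct justification is substituted; but the unqualified assertion that simplicity of $E_{C,A}$ makes $(C,A)$ a smooth point of $\W^2_d(\vert L\vert)$ of the expected dimension is not valid for an arbitrary pair $(C,A)$.
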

Aprodu and Farkas' result follows from a parameter count for spaces of Donagi-Morrison extensions corresponding to non-simple Lazarsfeld-Mukai bundles of rank $2$. The strategy used to prove Theorem \ref{thm:principale} consists, instead, of counting the number of moduli of $\mu_L$-unstable and properly $\mu_L$-semistable Lazarsfeld-Mukai bundles of rank $3$; this involves Artin stacks that parametrize the corresponding Harder-Narasimhan and Jordan-H\"older filtrations.\vspace{0.3cm}

The plan of the paper is as follows. Sections \ref{background1} and \ref{background2} give background information on Lazarsfeld-Mukai bundles and stability of sheaves on $K3$ surfaces. 

In Section \ref{bistrot} we present a different proof of Aprodu and Farkas' result and show that, if $\rho(g,1,d)>0$, the Lazarsfeld-Mukai bundles corresponding to general points of any dominating component of $\W^1_d(\vert L\vert)$ are not only simple, but even $\mu_L$-stable (Theorem \ref{thm:stable}). We introduce stacks of filtrations, studied for instance by Bridgeland in \cite{bridgeland} and Yoshioka in \cite{yoshioka}, and explain our parameter count in an easier case. The space of Lazarsfeld-Mukai bundles $E$, such that the bundles appearing in the Harder-Narasimhan filtration of $E$ have prescribed Mukai vectors, turns out to be an Artin stack, whose dimension can be computed by using some well known facts regarding morphisms between semistable sheaves. 

In Section \ref{tempo} we look at the different types of possible Harder-Narashiman and Jordan-H\"older filtrations of a rank-$3$ Lazarsfeld-Mukai bundle $E$ with $\det(E)=L$ and $c_2(E)=d$. If the determinants of both the subbundles $E_i$ and the quotient sheaves $E^j$, given by the filtration of $E$, have at least $2$ global sections, their restriction to a general curve $C\in\vert L\vert$ contributes to the Clifford index. This is used in order to bound from below the intersection products between the first Chern classes of the sheaves $E_i$ and $E^j$.

In Sections \ref{mare}, \ref{section:cola}, \ref{spiaggia} we estimate the number of moduli of pairs $(C,A)$ corresponding to rank-$3$ Lazarsfeld-Mukai bunldes which are not $\mu_L$-stable. The subdivision in three sections reflects the different methods necessary to treat various types of filtrations, depending on their length and on the rank of the sheaves $E_i$ and $E^j$. At the end of Section \ref{spiaggia} the proofs of both Theorem \ref{thm:magari}  and Theorem \ref{thm:principale} are given.

In Section \ref{bus}, an application towards transversality of Brill-Noether loci and Gieseker-Petri loci is presented. Recall that the Gieseker-Petri locus $GP_g$ consists, by definition, of curves inside $M_g$ that violate the Gieseker-Petri Theorem. For values of $r,d$ such that $\rho(g,r,d)\geq 0$, one defines the component of $GP_g$ of type $(r,d)$ as
$$GP^r_{g,d}:=\{[C]\in M_g\,\vert\,\exists\,(A,V)\in G^r_d(C)\textrm{ with }\ker\mu_{0,V}\neq 0\},$$
where $\mu_{0,V}$ is the Petri map. The subscheme
$$\widetilde{GP}^r_{g,d}:=\{[C]\in M_g\,\vert\,\exists\,A\in W^r_d(C)\setminus W^{r+1}_d(C)\textrm{ with }\ker\mu_{0,A}\neq 0\}$$
is open in $GP^r_{g,d}$ but not necessarily dense. We prove the following:
\begin{thm}\label{thm:tra}
Let $r\geq 3$, $g\geq 0$, $d\leq g-1$ be positive integers such that $\rho(g,r,d)<0$ and $d-2r+2\geq \lfloor(g+3)/2\rfloor$. If $r\geq 4$, assume $d^2>4(r-1)(g+r-2)$. For $r=3$, let $d^2>8g+1$. If $-1$ is not represented by the quadratic form
$$
Q(m,n)=(r-1)m^2+mnd+(g-1)n^2,\,\,\,m,n\in\mathbb{Z},
$$
then:
\renewcommand{\theenumi}{\alph{enumi}}
\begin{enumerate}
\item $M^r_{g,d}\not\subset M^1_{g,f}$   for $f<(g+2)/2$.
\item $M^r_{g,d}\not\subset \widetilde{GP}^1_{g,f}$    for $f\geq (g+2)/2$.
\item $M^r_{g,d}\not\subset M^2_{g,e}$    if $e<d-2r+5$ and $\rho(g,2,e)<0$. 
\item $M^r_{g,d}\not\subset \widetilde{GP}^2_{g,e}$   if $e<\min\left\{\frac{17}{24}g+\frac{23}{12}, d-2r+5\right\}$ and $\rho(g,2,e)\geq 0$. 
\end{enumerate}
\end{thm}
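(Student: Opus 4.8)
The plan is to establish each of the four non-inclusions by exhibiting a \emph{single} curve $[C]\in M^r_{g,d}$ lying outside the locus in question; the curve will be realized on a $K3$ surface whose Picard group has rank $2$, so that, after this geometric reduction, everything comes down to a lattice computation combined with the results already available (\cite{donagi} for pencils, Theorems~\ref{thm:magari} and \ref{thm:principale} for nets, and the result of Aprodu--Farkas \cite{aprodu}, see also Theorem~\ref{thm:stable}).

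For the construction I would take the rank-$2$ even lattice $\Lambda$ with Gram matrix $\left(\begin{smallmatrix} 2g-2 & d\\ d & 2r-2\end{smallmatrix}\right)$ in a basis $\{L,H\}$. Its discriminant $(2g-2)(2r-2)-d^2$ is negative by the hypothesis $d^2>4(r-1)(g+r-2)$ (respectively $d^2>8g+1$ when $r=3$), so $\Lambda$ has signature $(1,1)$; and since $\tfrac12(mH+nL)^2=Q(m,n)$ for $m,n\in\Z$, the assumption that $-1$ is not represented by $Q$ is exactly the statement that $\Lambda$ has no vector of square $-2$. Hence $\Lambda$ embeds primitively into the $K3$ lattice, and by the surjectivity of the period map and the Torelli theorem there is a $K3$ surface $S$ with $\Pic(S)\cong\Lambda$; the absence of $(-2)$-classes means that, after a sign change, both $L$ and $H$ are ample. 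A general $C\in\vert L\vert$ is then smooth and irreducible of genus $g$, and since $(H-L)\cdot L=d-2g+2<0$ the class $H-L$ is not effective, so $h^0(C,H\otimes\oo_C)\geq h^0(S,H)=2+H^2/2=r+1$; thus $W^r_d(C)\neq\emptyset$ and $[C]\in M^r_{g,d}$. I would then record several numerical features of this $C$, all coming from the same lattice bookkeeping: because $S$ has no $(-2)$-curves, every nonzero effective class $D$ satisfies $D^2\geq 0$ and $h^0(S,D)=2+D^2/2$; $C$ has Clifford dimension $1$ (using also \cite{ciliberto}); and for any $M\in\Pic(S)$ adapted to $\vert L\vert$ one has $\Cliff(M\otimes\oo_C)=M\cdot L-M^2-2=M\cdot(L-M)-2$ (the sub-case in which $M$ or $L-M$ is a multiple of an elliptic class being treated separately). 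Writing $M=aL+bH$, the effectivity of $M$ and of $L-M$ forces $0<M\cdot L<L^2$ and $M^2,(L-M)^2\geq 0$, a system with only finitely many integral solutions; the numerical hypotheses — and, in cases (c) and (d), the degree bound $c_1(M)\cdot C\leq(4g-4)/3$ supplied by Theorems~\ref{thm:magari} and \ref{thm:principale} — then leave $M=H$ and $M=L-H$ as the only adapted line bundles whose Clifford index is below the thresholds occurring in the statement, and for these $\Cliff(M\otimes\oo_C)=d-2r$. In particular $C$ has maximal gonality $\lfloor(g+3)/2\rfloor$ and maximal Clifford index, by applying Donagi--Morrison to any hypothetical pencil of smaller degree.

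With these facts in hand, cases (a) and (c) are immediate. If $[C]\in M^1_{g,f}$ with $f<(g+2)/2$, then, after removing base points, $C$ carries a complete base point free $g^1_{f'}$ with $f'\leq f$ and $\rho(g,1,f')<0$, so by \cite{donagi} it lies in $\vert M\otimes\oo_C\vert$ for some $M$ adapted to $\vert L\vert$ with $\Cliff(M\otimes\oo_C)\leq f'-2<(g-2)/2$; this contradicts $M\in\{H,L-H\}$ and $\Cliff(M\otimes\oo_C)=d-2r\geq\lfloor(g+3)/2\rfloor-2\geq(g-2)/2$. Case (c) is the same with Theorem~\ref{thm:magari} replacing \cite{donagi}: a complete base point free $g^2_e$ with $e<d-2r+5$ and $\rho(g,2,e)<0$ would lie in $\vert M\otimes\oo_C\vert$ with $\Cliff(M\otimes\oo_C)\leq e-4<d-2r+1$, again impossible. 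For (b) and (d) I would use that a linear series $A$ with $\ker\mu_{0,A}\neq 0$ has non-simple, hence non-$\mu_L$-stable, Lazarsfeld--Mukai bundle; since $C$ is general in $\vert L\vert$, the pair $(C,A)$ must then lie in a \emph{dominating} component $\W$ of $\W^1_f(\vert L\vert)$ (resp. $\W^2_e(\vert L\vert)$) with non-simple (resp. non-$\mu_L$-stable) general member. In case (b), $f\geq(g+2)/2$ gives $f>g-\lfloor(g+3)/2\rfloor+2$, so by Aprodu--Farkas the general member of $\W$ is simple, a contradiction (the boundary case $\rho(g,1,f)=0$ being handled apart). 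In case (d), $e<\tfrac{17}{24}g+\tfrac{23}{12}<\tfrac34 g+2$ puts us in part (b) of Theorem~\ref{thm:principale}, which again produces an adapted $M$ with $\Cliff(M\otimes\oo_C)\leq e-4$ and $c_1(M)\cdot C\leq(4g-4)/3$; the sharper bound $e<\tfrac{17}{24}g+\tfrac{23}{12}$ is exactly what makes the lattice estimate force $M\in\{H,L-H\}$, yielding the contradiction $\Cliff(M\otimes\oo_C)=d-2r\geq e-3$.

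The main obstacle is the finiteness-and-enumeration step for adapted line bundles $M=aL+bH$: one must combine $M^2\geq 0$, $(L-M)^2\geq 0$, $0<M\cdot L<L^2$, the degree bound $c_1(M)\cdot C\leq(4g-4)/3$ and the precise arithmetic inequalities of the hypotheses to conclude $\vert b\vert=1$, and it is precisely here that the constants $d-2r+2\geq\lfloor(g+3)/2\rfloor$, $d^2>8g+1$ for $r=3$, $d^2>4(r-1)(g+r-2)$ for $r\geq 4$, $e<d-2r+5$ and $e<\tfrac{17}{24}g+\tfrac{23}{12}$ get pinned down; the parity of $g$ and the elliptic-class sub-cases add further bookkeeping. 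A secondary difficulty, in (b) and (d), is to pass from the non-simplicity of the Lazarsfeld--Mukai bundle of one special pair $(C,A)$ to a genuinely \emph{dominating} component of the Brill--Noether stack with non-simple (resp. non-$\mu_L$-stable) general member: here one leans on the genericity of $C$ together with the dimension counts of Sections~\ref{mare}--\ref{spiaggia} (resp. of \cite{aprodu}), being careful that a component of the non-$\mu_L$-stable locus need not be a component of the whole of $\W^r_e(\vert L\vert)$.
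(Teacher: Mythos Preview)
Your overall architecture is the same as the paper's: construct a $K3$ surface $S$ with $\Pic(S)=\Z L\oplus\Z H$ and no $(-2)$-classes (the paper cites Rathmann, you use the period map directly---same thing), show $L$ is ample and $C$ has Clifford dimension $1$, and then reduce each of (a)--(d) to the lattice estimate $D\cdot(C-D)\geq d-2r+2$ for adapted $D$, with equality only when $D\in\{H,C-H\}$. For (a) and (c) your citations of Donagi--Morrison and Theorem~\ref{thm:magari} are exactly what the paper uses (these results apply to \emph{every} $(C,A)$ with $\rho<0$, not just general ones), so those cases are fine.

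The real issue is in (b) and (d), and it is precisely the ``secondary difficulty'' you flag but do not resolve. Theorem~\ref{thm:stable} (and Aprodu--Farkas) only asserts that the \emph{general} member of each dominating component of $\W^1_f(\vert L\vert)$ has simple LM bundle; likewise Theorem~\ref{thm:principale}(b) produces the adapted $M$ only for the \emph{general} $(C,A)$ in a bad dominating component. Knowing that $C$ is general in $\vert L\vert$ does force $(C,A)$ to lie in some dominating component $\W$, but it does \emph{not} force $(C,A)$ to be general in $\W$; so a specific $A$ on your general $C$ could still have $\ker\mu_{0,A}\neq0$ without contradicting either theorem. The paper does not try to close this gap by a dominating-component argument. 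Instead:
\begin{itemize}
\item For (b), it argues directly on the destabilizing sequence $0\to M\to E_{C,A}\to N\otimes I_\xi\to 0$. The absence of $(-2)$-classes forces $(M\otimes N^\vee)^2\geq0$, i.e.\ $c_1(M)\cdot c_1(N)\leq(g-1)/2$, which contradicts Lemma~\ref{lem:uova} unless the sequence splits; and for each split bundle $E=M\oplus N$ the map $G(2,H^0(E))\dashrightarrow\vert L\vert$ factors through $h_E$ with fibers of dimension $\geq1$, so its image has dimension $<g$. Thus the locus of $C$ carrying such an $A$ is a proper closed subset.
\item For (d), instead of Theorem~\ref{thm:principale}(b) the paper invokes Corollary~\ref{cor:lasagna} and Proposition~\ref{prop:vai} \emph{directly} on the specific pair $(C,A)$. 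These are pointwise statements: once $E_{C,A}$ is not $\mu_L$-stable (which follows from $\ker\mu_{0,A}\neq0$ and $C$ general via the vanishing of the coboundary map in Section~\ref{background1}) and $d_A<\tfrac34 k+\tfrac{g}{3}+\tfrac76=\tfrac{17}{24}g+\tfrac{23}{12}$, they produce the adapted $M$ with $c_1(M)\cdot C\leq(4g-4)/3$ and $\Cliff(M\otimes\oo_C)\leq d_A-4$, after which your lattice computation applies.
\end{itemize}
So your strategy is right, but in (b) and (d) you should cite the pointwise inputs (Lemma~\ref{lem:uova} plus the split-bundle count for (b); Corollary~\ref{cor:lasagna} and Proposition~\ref{prop:vai} for (d)) rather than the dominating-component Theorems~\ref{thm:stable} and~\ref{thm:principale}.
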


The assumption on the quadratic form $Q$ is a mild hypothesis. For instance, it is automatically satisfied when $r$ and $g$ are odd and $d$ is even.

In the last section we exhibit an application of our methods to higher rank Brill-Noether Theory. We give a negative answer to Question 4.2 in \cite{new}, which asks whether the second Clifford index $\Cliff_2(C)$, associated with rank-$2$ vector bundles on a curve $C$, equals $\Cliff(C)$ whenever $C$ is a Petri curve. We analyze what happens in genus $11$ and look at the Noether-Lefschetz divisor $\NL^4_{11,13}$, which consists of curves that lie on a $K3$ surface $S\subset \mathbb{P}^4$ with Picard number at least $2$; this coincides with the locus of curves $[C]\in M_{11}$ such that $\Cliff_2(C)<\Cliff(C)$ (cf. \cite{gan}). We prove the following:
\begin{thm}\label{thm:chisa}
A general curve $[C]\in \NL^4_{11,13}$ satisfies the Gieseker-Petri Theorem.
\end{thm}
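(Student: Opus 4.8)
First I would make explicit the geometry of a general $[C]\in\NL^4_{11,13}$: it lies on a $K3$ surface $S$ which, for $[C]$ general, has $\Pic(S)$ of rank exactly $2$, generated by $L:=\oo_S(C)$ (with $L^2=20$) and a class $H$ with $H^2=6$, $H\cdot L=13$, so that $|H|$ realises $S\subset\mathbb{P}^4$. A short computation in this lattice shows that $S$ carries no $(-2)$-curve, that $L$ is ample, and that the effective (hence nef) cone of $S$ is spanned by the two isotropic classes $F:=L-H$ and $G:=10H-3L$; in particular $|F|$ is an elliptic pencil. The only effective classes $D$ on $S$ with $h^0(S,D)\geq 2$ and $h^0(S,L-D)\geq 2$ are $F$ and $H$, both of Clifford index $5$, so by Green--Lazarsfeld $\Cliff(C)=5=\lfloor(g-1)/2\rfloor$; consequently $C$ has Clifford dimension $1$ and maximal gonality $k=7$, and the hypotheses of Theorems \ref{thm:magari} and \ref{thm:principale} are fulfilled. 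Since $GP_{11}$ is closed in $M_{11}$, it suffices to treat the general such $C$. So assume $A$ is a complete, base point free $g^r_d$ on $C$ with $h^0(A),h^1(A)\geq 2$ and $\ker\mu_{0,A}\neq 0$; then $A\otimes\oo_C$ contributes to the Clifford index, so $d-2r=\Cliff(A)\geq 5$, and $h^1(A)=11-d+r\geq 2$ gives $d\leq r+9$. These force $r\leq 4$ and leave only $r=1$ ($7\leq d\leq 10$), $r=2$ ($9\leq d\leq 11$), $r=3$ ($11\leq d\leq 12$) and $r=4$ ($d=13$) -- a finite list.

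Next I would pass to the Lazarsfeld--Mukai bundle $E=E_{C,A}$, of rank $r+1$, with $\det E=L$, $c_2(E)=d$, $h^0(E)=h^0(A)+h^1(A)$, and use Lazarsfeld's identity $\dim\ker\mu_{0,A}=\dim\Hom(E,E)-1$: thus $\mu_{0,A}$ is non-injective precisely when $E$ is not simple, hence not $\mu_L$-stable. The ``generic'' series are covered by the earlier sections: Theorem \ref{thm:stable} (Section \ref{bistrot}) handles $r=1$; Theorem \ref{thm:principale}(a) rules out non-$\mu_L$-stable bundles in dominating components of $\W^2_{11}(|L|)$ because $11>\frac{3}{4}g+2$, and Theorem \ref{thm:principale}(b) applies for $r=2$, $d\in\{9,10\}$; the parameter count of Sections \ref{mare}--\ref{spiaggia} disposes of $r=3$, $d=12$; and for $r=3$, $d=11$ the general $[C]\in\NL^4_{11,13}$ does not lie on the Brill--Noether divisor $M^3_{11,11}$, so $W^3_{11}(C)=\emptyset$ and the case is vacuous. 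In each case the conclusion is that a non-$\mu_L$-stable $E$ forces $|A|$ to lie in $|M\otimes\oo_C|$ for some $M\in\Pic(S)$ adapted to $|L|$ with $c_1(M)\cdot C\leq(4g-4)/3=\frac{40}{3}$; since $S$ has no $(-2)$-curve, running through the lattice $\langle L,H\rangle$ shows that the only such $M$ are $F$ and $H$.

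It then remains to examine directly the series dominated by $F\otimes\oo_C$ or $H\otimes\oo_C$: essentially the $g^1_7$ given by $\oo_C(F)$, the $g^4_{13}$ given by $\oo_C(H)$, and their twists $\oo_C(H)(-E)$ by small effective $E$. As $\oo_C(F)$ and $\oo_C(H)$ are Serre dual ($\omega_C\otimes\oo_C(-F)\cong\oo_C(H)$), the base point free pencil trick applied to the pencil $\oo_C(F)$ gives
\[
\ker\mu_{0,\oo_C(F)}\;\cong\;\ker\mu_{0,\oo_C(H)}\;\cong\;H^0\bigl(C,\oo_C(2H-L)\bigr),
\]
a line bundle of degree $(2H-L)\cdot L=6$ on $C$, while the twisted series reduce to this (or to a rank-$2$ version of the pencil trick handled as in Theorem \ref{thm:principale}) after removing base points. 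So the crux is $H^0(C,\oo_C(2H-L))=0$ for general $C$. To prove it, write $\oo_C(2H-L)=\oo_C(H)(-D)$ with $D=F_0\cap C$ for $F_0\in|F|$ a general fibre of the elliptic fibration $f\colon S\to\mathbb{P}^1$; since $N_{F_0/S}=\oo_{F_0}$ one has $L|_{F_0}=H|_{F_0}$, and $H^1(S,L-F)=H^1(S,H)=0$ makes $H^0(S,L)\to H^0(F_0,L|_{F_0})$ surjective, so $D$ is cut on $F_0$ by an arbitrary section of $H|_{F_0}$. Realising $F_0$ in $\mathbb{P}^4$ as the projection of the degree-$7$ elliptic normal curve $F_0\subset\mathbb{P}^6$ from a fixed line, $D$ is a general hyperplane section of $F_0\subset\mathbb{P}^6$ projected down, which for general $C$ spans all of $\mathbb{P}^4$; hence $D$ imposes $5=h^0(\oo_C(H))$ independent conditions on $|H\otimes\oo_C|$, i.e.\ $h^0(C,\oo_C(H)(-D))=h^0(C,\oo_C(2H-L))=0$. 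Combining the cases, no complete base point free linear series on $C$ has non-injective Petri map, so $C$ is a Petri curve.

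The step I expect to be the main obstacle is the vanishing $H^0(C,\oo_C(2H-L))=0$ for the general curve: the class $(2H-L)\otimes\oo_C$ has degree $6$ and is not restricted from an effective class on $S$ (indeed $(2H-L)^2=-8$ and $S$ has no $(-2)$-curve, so $h^0(S,\oo_S(2H-L))=0$), yet this does not preclude its effectivity on $C$; making the argument work really uses the elliptic fibration $|L-H|$, equivalently that the splitting type of $f_*\oo_S(L)$ on $\mathbb{P}^1$ is balanced enough for the multiplication map $H^0(S,L)\otimes H^1(S,\oo_S(2H-2L))\to H^1(S,\oo_S(2H-L))$ to be nonzero. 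A secondary difficulty is that the numerical hypotheses of Theorem \ref{thm:tra} fail at $g=11$, so the cases $r=2,3$ must be passed through Theorems \ref{thm:magari}--\ref{thm:principale} and the parameter counts individually; this remains manageable only because $S$ has no $(-2)$-curve and only $F$ and $H$ contribute to the Clifford index, which pins down all candidate adapted line bundles to a finite list.
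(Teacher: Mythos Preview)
Your setup—the lattice, absence of $(-2)$-curves, ampleness of $L$, $\Cliff(C)=5$, and the enumeration of adapted line bundles with $c_1(M)\cdot C\le 40/3$ as $F=L-H$ and $H$—is correct and agrees with the paper. Your argument for $H^0(C,\oo_C(2H-L))=0$ is also essentially right: since $H^0(S,L)\to H^0(F_0,L\vert_{F_0})$ is surjective while $H^0(S,H)\to H^0(F_0,H\vert_{F_0})$ is only a $5$-dimensional subspace of a $7$-dimensional space, a general $D=C\cap F_0\in\lvert H\vert_{F_0}\rvert$ is \emph{not} a hyperplane section of $F_0\subset\mathbb P^4$, hence spans $\mathbb P^4$, and so imposes $5$ conditions on $\lvert\oo_C(H)\rvert$. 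This gives a clean alternative treatment of the $g^1_7=\oo_C(F)$ and its Serre dual $g^4_{13}=\oo_C(H)$; the paper instead disposes of $r=1$ via the splitting argument at the end of Theorem~\ref{thm:gon}. (Your cases $r=3,4$ are redundant by Serre duality—$g^3_{11}\leftrightarrow g^2_9$, $g^3_{12}\leftrightarrow g^1_8$, $g^4_{13}\leftrightarrow g^1_7$—and the parameter counts of Sections~\ref{mare}--\ref{spiaggia} concern rank-$3$ bundles only, so cannot be invoked for $r=3$ as you do.)

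The genuine gap is for $r=2$. Theorem~\ref{thm:principale}(b) does \emph{not} exclude dominating components of $\W^2_{10}(\lvert L\rvert)$ with non-$\mu_L$-stable bundles; it only tells you that on such a component the general $A$ sits in $\lvert H\otimes\oo_C\rvert$. Knowing $\ker\mu_{0,\oo_C(H)}=0$ gives no control over $\ker\mu_{0,A}$ for a proper subseries $A=\oo_C(H)(-E)$, and your claim that ``the twisted series reduce to this after removing base points'' is unjustified—removing base points from $A$ returns $A$, not $\oo_C(H)$, and the pencil trick does not apply to a net. Likewise for $d=9$: Theorem~\ref{thm:magari} only yields $\lvert A\rvert\subset\lvert H\otimes\oo_C\rvert$, which amounts to $C\subset\mathbb P^4$ having a $4$-secant line, and you give no argument ruling this out. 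The paper closes both cases by a different route: via Corollary~\ref{cor:lasagna}, Proposition~\ref{prop:vai} and Lemma~\ref{lem:endo1} it extracts a Diophantine system for $D=c_1(E/N)$ in the rank-$2$ lattice, shows it has no solution for $d=9$, and for $d=10$ shows the only solution $D\equiv H$ forces $E\simeq\oo_S(C-H)\oplus(E/N)$ with $E/N$ the unique stable sheaf of Mukai vector $(2,H,2)$, whence $\dim G(3,H^0(S,E))=9<11=\dim\lvert L\rvert$. So the obstacle is not the vanishing you flagged, but the passage from the Donagi--Morrison containment $\lvert A\rvert\subset\lvert H\otimes\oo_C\rvert$ to any Petri or emptiness conclusion.
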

In other words, the Gieseker-Petri divisor $GP_{11}$ and the Noether-Lefschetz divisor $\NL^4_{11,13}$ are transversal.
\vspace{0.3cm}\\
\textbf{Acknowledgements:} This paper is part of my Ph.D. thesis and I am grateful to my advisor Gavril Farkas for discussions. I would like to thank Marian Aprodu for an inspiring conversation had last February in Berlin. A special thank goes to Peter Newstead for giving me the opportunity of spending a productive period at the Newton Institute in Cambridge and suggesting to me the genus-$11$ problem and further applications to higher rank Brill-Noether theory.

\section{Lazarsfeld-Mukai bundles}\label{background1}
In this section we briefly recall the definition and the main properties of Lazarsfeld-Mukai bundles (LM bundles in the sequel) associated with complete, base point free linear series on curves lying on $K3$ surfaces. We refer to \cite{lazarsfeld}, \cite{lazarsfeld1}, \cite{pareschi} for the proofs. Let $S$ be a $K3$ surface and $C\subset S$ a smooth connected curve of genus $g$. Any base point free linear series $A\in W^r_d(C)\setminus W^{r+1}_d(C)$ can be considered as a globally generated sheaf on $S$; therefore, the evaluation map $\mathrm{ev}_{A,S}:H^0(C,A)\otimes\oo_S\to A$ is surjective and one defines the bundle $F_{C,A}$ to be its kernel, i.e., 
\begin{equation}\label{equation:prima}
0\to F_{C,A}\to H^0(C,A)\otimes\oo_S\to A\to 0.
\end{equation}
The LM bundle associated with the pair $(C,A)$ is, by definition, $E_{C,A}:=F_{C,A}^\vee$. By dualizing (\ref{equation:prima}), one finds that $E_{C,A}$ sits in the following short exact sequence:
\begin{equation}\label{equation:seconda}
0\to H^0(C,A)^\vee\otimes\oo_S\to E_{C,A}\to \omega_C\otimes A^\vee\to 0;
\end{equation}
in particular, $E_{C,A}$ is equipped with a ($r+1$)-dimensional subspace of sections. The following proposition summarizes the most important properties of $E_{C,A}$:
\begin{prop}\label{prop:basic}
If $E_{C,A}$ is the LM bundle corresponding to a base point free linear series $A\in W^r_d(C)\setminus W^{r+1}_d(C)$, then:
\begin{itemize}
\item $\rk\, E_{C,A}=r+1$.
\item $\det E_{C,A}=L$, where $C\in\vert L\vert$.
\item $c_2( E_{C,A})=d$.
\item The bundle $E_{C,A}$ is globally generated off the base locus of $\omega_C\otimes A^\vee$.
\item $h^0(S, E_{C,A})=h^0(C,A)+h^0(C,\omega_C\otimes A^\vee)=r+1+g-d+r$,\\ $h^1(S, E_{C,A})=h^2(S, E_{C,A})=0$.
\item $\chi(S, E_{C,A}\otimes  F_{C,A})=2(1-\rho(g,r,d))$.
\end{itemize}
\end{prop}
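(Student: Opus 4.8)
The plan is to read the numerical invariants directly off the two defining sequences (\ref{equation:prima}) and (\ref{equation:seconda}) and then to deduce the cohomological statements from Serre duality on the $K3$ surface together with Riemann--Roch on $C$. Throughout, $i\colon C\hookrightarrow S$ denotes the inclusion and $A$ is viewed as the torsion sheaf $i_*A$ on $S$.

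First I would settle rank, determinant and $c_2$. Since $i_*A$ has rank $0$ and $h^0(C,A)=r+1$, sequence (\ref{equation:prima}) gives $\rk F_{C,A}=r+1$; being the kernel of a morphism of locally free sheaves, $F_{C,A}$ is reflexive, hence locally free on the smooth surface $S$, so $E_{C,A}=F_{C,A}^\vee$ is a rank-$(r+1)$ bundle. For the Chern classes I would compute $\mathrm{ch}(i_*A)$ by Grothendieck--Riemann--Roch, $\mathrm{ch}(i_*A)=i_*\!\big(\mathrm{ch}(A)\,\mathrm{td}(N_{C/S})^{-1}\big)$; since $\deg N_{C/S}=C^2=2g-2$ this gives $c_1(i_*A)=c_1(L)$ and $c_2(i_*A)=2g-2-d$ (alternatively $c_2(i_*A)$ is forced by $\chi(S,i_*A)=\chi(C,A)=d+1-g$). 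Feeding this into the Whitney formula for (\ref{equation:prima}) yields $c_1(F_{C,A})=-c_1(L)$ and $c_2(F_{C,A})=c_1(L)^2-c_2(i_*A)=d$; dualizing, $\det E_{C,A}=L$ and $c_2(E_{C,A})=c_2(F_{C,A})=d$.

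Global generation and the value of $h^0$ both follow from sequence (\ref{equation:seconda}). Away from $C$ the quotient $\omega_C\otimes A^\vee$ vanishes, so $E_{C,A}$ is free there; and since $H^1(S,\oo_S)=0$ the restriction $H^0(S,E_{C,A})\to H^0(C,\omega_C\otimes A^\vee)$ is surjective, so at any point where $\omega_C\otimes A^\vee$ is globally generated its fibre is hit by sections of $E_{C,A}$, which together with the trivial subsheaf $H^0(C,A)^\vee\otimes\oo_S$ (globally generated everywhere) gives global generation of $E_{C,A}$ off the base locus of $\omega_C\otimes A^\vee$. Taking dimensions in the long exact sequence, $h^0(S,E_{C,A})=(r+1)+h^0(C,\omega_C\otimes A^\vee)$, while Riemann--Roch and Serre duality on $C$ give $h^0(C,\omega_C\otimes A^\vee)=h^1(C,A)=(r+1)-(d+1-g)=g-d+r$; hence $h^0(S,E_{C,A})=h^0(C,A)+h^0(C,\omega_C\otimes A^\vee)=r+1+g-d+r$.

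For the vanishing of $h^1$ and $h^2$ I would go back to $F_{C,A}$ via Serre duality on the $K3$ surface: $H^1(S,E_{C,A})\cong H^1(S,F_{C,A})^\vee$ and $H^2(S,E_{C,A})\cong H^0(S,F_{C,A})^\vee$. In the long exact sequence of (\ref{equation:prima}), the map $H^0(S,\oo_S)\otimes H^0(C,A)\to H^0(S,i_*A)$ is just the tautological identification $H^0(C,A)\xrightarrow{\sim}H^0(C,A)$, hence an isomorphism; therefore $H^0(S,F_{C,A})=0$, the connecting map into $H^1(S,F_{C,A})$ vanishes, and $H^1(S,F_{C,A})$ injects into $H^1(S,\oo_S)\otimes H^0(C,A)=0$. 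Thus $h^1(S,E_{C,A})=h^2(S,E_{C,A})=0$. Finally, since $F_{C,A}$ is locally free, $H^i(S,E_{C,A}\otimes F_{C,A})=\Ext^i(F_{C,A},F_{C,A})$, so $\chi(S,E_{C,A}\otimes F_{C,A})=\chi(F_{C,A},F_{C,A})=-\langle v(F_{C,A}),v(F_{C,A})\rangle$ for the Mukai vector $v$ and Mukai pairing $\langle\,,\,\rangle$. From the invariants above $v(F_{C,A})=(r+1,\,-c_1(L),\,g-d+r)$, and using $c_1(L)^2=2g-2$ one gets $\langle v(F_{C,A}),v(F_{C,A})\rangle=(2g-2)-2(r+1)(g-d+r)=2\rho(g,r,d)-2$ by definition of $\rho$; hence $\chi(S,E_{C,A}\otimes F_{C,A})=2(1-\rho(g,r,d))$. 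There is no genuine obstacle here — everything is Chern-class bookkeeping together with Serre duality and Riemann--Roch — the only delicate point being the Chern-class computation for the torsion sheaf $i_*A$ and keeping the signs correct under dualization.
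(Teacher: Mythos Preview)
Your argument is correct and follows the standard route; the paper itself does not supply a proof but simply refers the reader to \cite{lazarsfeld}, \cite{lazarsfeld1}, \cite{pareschi}, so there is nothing to compare against directly. One small slip: you justify local freeness of $F_{C,A}$ by calling it ``the kernel of a morphism of locally free sheaves'', but $i_*A$ is not locally free on $S$. The correct reason is that $i_*A$, being a line bundle on a smooth divisor, has homological dimension $1$ as an $\oo_S$-module, so the kernel in (\ref{equation:prima}) has projective dimension $0$, i.e.\ is locally free. Everything else --- the Chern class bookkeeping via GRR/Whitney, the Serre duality argument for $h^1=h^2=0$, and the Mukai-pairing computation yielding $2(1-\rho)$ --- is carried out correctly.
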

In particular, if $\rho(g,r,d)<0$, the LM bundle $E_{C,A}$ is non-simple.

Being a LM bundle is an open condition. Indeed, a vector bundle $E$ of rank $r+1$ is a LM bundle whenever $h^1(S,E)=h^2(S,E)=0$ and there exists $\Lambda\in G(r+1,H^0(S,E))$ such that the degeneracy locus of the evaluation map $ev_\Lambda:\Lambda\otimes\oo_S\to E$ is a smooth connected curve. 

Analogously, given $(C,A)$ as above, one defines a rank-$r$ vector bundle $M_A$ on $C$ as the kernel of $\mathrm{ev}_{A,C}:H^0(C,A)\otimes\oo_C\to A$. It turns out that $$H^0(C,M_A\otimes \omega_C\otimes A^\vee)=\ker\mu_{0,A}.$$ Similarly, by tensoring (\ref{equation:seconda}) by $F_{C,A}$ and taking cohomology, one shows that $$H^0(S,E_{C,A}\otimes F_{C,A})\simeq H^0(C,F_{C,A}\otimes \omega_C\otimes A^\vee).$$ Moreover, there is the following short exact sequence:
\begin{equation}\label{nuvole}
0\to \oo_C\to F_{C,A}\otimes \omega_C\otimes A^\vee\to M_A\otimes \omega_C\otimes A^\vee\to 0.
\end{equation}
Having denoted by $\pi: \W^r_d(\vert L\vert)\to \vert L\vert_s$ the natural projection and by $\mu_{1,A,S}$ the composition of the Gaussian map $\mu_{1,A}:\ker\mu_{0,A}\to H^0(C,\omega_C^2)$ with the transpose of the Kodaira-Spencer map $\delta_{C,S}^\vee:H^0(C,\omega_C^2)\to (T_C\vert L\vert)^\vee=H^1(C,\oo_C)$, one has that $\im(d\pi_{(C,A)})\subset \mathrm{Ann}(\im(\mu_{1,A,S}))$. Sard's Lemma applied to the projection $\pi$ implies that, if $\W\subset \W^r_d(\vert L\vert)$ is a dominating component and $C$ is general in its linear system, the sequence (\ref{nuvole}) is exact on the global sections for any $(C,A)\in(\pi\vert_\W)^{-1}(C)$ such that $A$ is base point free and $h^0(C,A)=r+1$; indeed, the coboundary map $H^0(C, M_A\otimes \omega_C\otimes A^\vee)\to H^1(C,\omega_C)$ coincides, up to a scalar factor, with $\mu_{1,A,S}$. As a consequence, the simplicity of $E_{C,A}$ is equivalent to the injectivity of $\mu_{0,A}$. In particular, if general points of $\W$ are complete, base point free linear series corresponding to simple LM bundles, the fiber $(\pi\vert_\W)^{-1}(C)$ over a general $C\in\vert L\vert_s$ is reduced of the expected dimension. Standard Brill-Noether theory implies that no component of $\W^r_d(\vert L\vert)$ is entirely contained in $\W^{r+1}_d(\vert L\vert)$. Therefore, the variety $W^r_d(C)$ is reduced of the expected dimension for a general $C\in\vert L\vert_s$ if no dominating component $\W$ of $\W^r_d(\vert L\vert)$ is of one of the following types:
\begin{enumerate}
\item For $(C,A)\in\W$ general, $A$ is complete, base point free and $E_{C,A}$ is non-simple.
\item\label{referendum}  For $(C,A)\in\W$ general, $A$ is not base point free and $\ker\mu_{0,A}\neq 0$.
\end{enumerate}
In order to exclude (\ref{referendum}), one can proceed by induction on $d$ because, if $B$ denotes the base locus of $A$ and $\ker\mu_{0,A}\neq 0$, then $\mu_{0,A(-B)}\neq 0$, too.

\section{Mumford stability for sheaves on $K3$ surfaces}\label{background2}
For later use, we recall some facts about coherent sheaves on smooth projective surfaces referring to \cite{lehn} and \cite{shatz} for most of the proofs. Let $S$ be a smooth, projective surface over $\mathbb{C}$ and $H$ an ample line bundle on it. Given a torsion free sheaf $E$ on $S$ of rank $r$, the $H$-slope of $E$ is defined as
$$
\mu_H(E)=\frac{c_1(E)\cdot c_1(H)}{r},
$$
and $E$ is called $\mu_H$-semistable (resp. $\mu_H$-stable) in the sense of Mumford-Takemoto if for any subsheaf $0\neq F\subset E$ with $\rk\, F<\rk\, E$, one has $\mu_H(F)\leq \mu_H(E)$ (resp. $\mu_H(F)< \mu_H(E)$). 
The Harder-Narasimhan filtration of $E$ (HN filtration in the sequel) is the unique filtration
$$0=E_0\subset E_1\subset\ldots\subset E_s=E,$$
such that $E^i:=E_i/E_{i-1}$ is a torsion free, $\mu_H$-semistable sheaf for $1\leq i\leq s$, and $\mu_H(E_{i+1}/E_i)<\mu_H(E_i/E_{i-1})$ for $1\leq i\leq s-1$. Such a filtration always exists. It can be easily checked that, if $E$ is a vector bundle, the sheaves $E_i$ are locally free; moreover,
$$
\mu_H(E_1)>\mu_H(E_2)>\ldots >\mu_H(E).$$
The sheaf $E_1$ is called the maximal destabilizing sheaf of $E$; the number $\mu_H(E_1)$ is the maximal slope of a proper subsheaf of $E$ and, among the subsheaves of $E$ of slope equal to $\mu_H(E_1)$, the sheaf $E_1$ has maximal rank. In particular, $E_1$ is $\mu_H$-semistable.

Now, we assume $E$ is $\mu_H$-semistable. A Jordan-H\"older filtration of $E$ (later on, JH filtration) is a filtration
$$0=JH_0(E)\subset JH_1(E)\subset\ldots\subset JH_s(E)=E,$$
such that all the factors $\mathrm{gr}_i(E):=JH_i(E)/JH_{i-1}(E)$ are torsion free, $\mu_H$-stable sheaves of slope equal to $\mu_H(E)$. This implies that $\mu_H(JH_i(E))=\mu_H(E)$ for $1\leq i\leq s$. The Jordan-H\"older filtration always exists but is not uniquely determined, while the graded object $\mathrm{gr}(E):=\oplus_i  \mathrm{gr}_i(E)$ is.                                                         

The following result concerns morphisms between $\mu_H$-semistable and $\mu_H$-stable sheaves on $S$ (cf. \cite{shatz}, \cite{friedman}).  
\begin{prop}\label{prop:morfismi}
Given two torsion free sheaves $E$ and $F$ on $S$, the following holds:
\begin{enumerate}
\item\label{utile} If $E$ and $F$ are $\mu_H$-semistable and $\mu_H(E)>\mu_H(F)$, then $\Hom(E,F)=0$.
\item If $E$ and $F$ are $\mu_H$-stable, $\mu_H(E)=\mu_H(F)$ and there exists $0\neq\varphi\in \Hom(E,F)$, then $\rk\, E=\rk\, F$ and $\varphi$ is an isomorphism in codim $\leq 1$ (in particular it is injective).
\end{enumerate}
\end{prop}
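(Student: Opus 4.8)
The plan is to analyze any nonzero morphism $\varphi\in\Hom(E,F)$ through its image $I:=\im(\varphi)$, which is simultaneously a quotient of $E$ and a subsheaf of the torsion-free sheaf $F$ (hence itself torsion-free). The entire argument rests on comparing the three slopes $\mu_H(E)$, $\mu_H(I)$ and $\mu_H(F)$, together with the elementary observation that, since $H$ is ample, any nonzero effective divisor $D$ satisfies $c_1(\oo_S(D))\cdot c_1(H)>0$; consequently a torsion sheaf on $S$ has nonnegative $H$-degree, with equality exactly when its support has codimension $\geq 2$. First I would record the dual formulation of semistability I need: writing $K:=\ker(\varphi)\subset E$, additivity of rank and degree in $0\to K\to E\to I\to 0$ shows that $\mu_H(K)\leq\mu_H(E)$ forces $\mu_H(I)\geq\mu_H(E)$ and conversely, so that $\mu_H$-semistability of $E$ (controlling intermediate-rank subsheaves $K$) is equivalent to every nonzero quotient of $E$ having slope $\geq\mu_H(E)$, while $\mu_H$-semistability of $F$ says every intermediate-rank subsheaf has slope $\leq\mu_H(F)$.

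For part (\ref{utile}) I would run through the possible ranks of $I$. If $0<\rk\,I<\rk\,E$ then $K$ has intermediate rank, so $\mu_H(K)\leq\mu_H(E)$ and hence $\mu_H(I)\geq\mu_H(E)$; if instead $\rk\,I=\rk\,E$ then $K$ is torsion, so $K=0$ by torsion-freeness of $E$, giving $\mu_H(I)=\mu_H(E)$. On the other side $I\subset F$: if $\rk\,I<\rk\,F$ then semistability of $F$ gives $\mu_H(I)\leq\mu_H(F)$, while if $\rk\,I=\rk\,F$ then $F/I$ is torsion of nonnegative $H$-degree, again yielding $\mu_H(I)\leq\mu_H(F)$. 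Chaining these produces $\mu_H(E)\leq\mu_H(I)\leq\mu_H(F)$, contradicting the hypothesis $\mu_H(E)>\mu_H(F)$; hence no nonzero $\varphi$ exists.

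For the second part the same dichotomy, now with the strict inequalities coming from stability, does all the work. If $K\neq 0$ then $0<\rk\,K<\rk\,E$ (a full-rank kernel would force $I=0$, contradicting $\varphi\neq 0$), so stability of $E$ gives $\mu_H(K)<\mu_H(E)$ and therefore $\mu_H(I)>\mu_H(E)=\mu_H(F)$; but $I\subset F$ forces $\mu_H(I)\leq\mu_H(F)$ exactly as above, a contradiction. Thus $\varphi$ is injective and $I\cong E$ with $\mu_H(I)=\mu_H(F)$. Stability of $F$ then rules out $\rk\,I<\rk\,F$, which would give $\mu_H(I)<\mu_H(F)$, so $\rk\,E=\rk\,I=\rk\,F$. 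Finally, from $\rk\,I=\rk\,F$ and $\mu_H(I)=\mu_H(F)$ the torsion quotient $F/I$ has $H$-degree zero, so by the ampleness remark its support has codimension $\geq 2$; equivalently $\varphi$ is an isomorphism away from a finite set of points, that is, an isomorphism in codimension $\leq 1$.

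The only genuinely delicate points, which I would treat carefully rather than routinely, are the rank-equality cases: one must use torsion-freeness of $E$ and $F$ to kill a rank-zero kernel and to control the cokernel, and one must invoke ampleness of $H$ to convert ``$F/I$ is torsion of $H$-degree $0$'' into ``$F/I$ is supported in codimension $\geq 2$.'' These are precisely the places where the statement's phrasing ``isomorphism in codim $\leq 1$,'' rather than a genuine isomorphism, is forced, since $F/I$ may remain a nonzero sheaf concentrated at points.
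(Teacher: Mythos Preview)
Your argument is correct and is the standard proof of this classical fact. Note, however, that the paper does not supply its own proof of this proposition: it is stated as background with references to \cite{shatz} and \cite{friedman}, so there is nothing in the paper to compare your approach against. Your write-up would serve perfectly well as the omitted proof.
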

In the case where $S$ is a $K3$ surface, by Serre duality $H^2(S,E)\simeq\Hom(E,\oo_S)^\vee$; hence (\ref{utile}) implies that, if $E$ is $\mu_H$-semistable and $\mu_H(E)>0$, then $h^2(S,E)=0$.

From now on, we assume $S$ to be a $K3$ surface. Throughout the paper we will often use the following fact:
\begin{lem}\label{lem:marghe}
Let $E,Q\in\mathrm{Coh}(S)$ be torsion free and $\rk\, E\geq 2$. If $E$ is globally generated off a finite number of points, $h^2(S,E)=0$ and there exists a surjective morphism $\varphi:E\to Q$, then $h^0(S,Q^{\vee\vee})\geq 2$. 
\end{lem}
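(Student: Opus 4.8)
The plan is to reduce everything to a statement about global sections of a rank-one quotient. First I would produce a rank-one torsion-free quotient of $Q$: since $\rk\, Q \leq \rk\, E$ and we want to end up with something whose double dual is a line bundle, I would pass from $Q$ to a suitable quotient line bundle. Concretely, let $r' = \rk\, Q$. If $r' = 1$, set $N := Q^{\vee\vee}$, a line bundle on $S$ (reflexive rank-one sheaves on a smooth surface are line bundles), and the surjection $E \to Q$ gives a generically surjective map $E \to N$. If $r' \geq 2$, I would compose $E \twoheadrightarrow Q$ with a generic surjection $Q \to N$ onto a line bundle $N$ (take $N := (\det Q)$ twisted appropriately, or more simply a rank-one torsion-free quotient of $Q$ obtained by projecting away a general subsheaf of rank $r'-1$, and then replace it by its double dual); the point is only that $N$ is a line bundle receiving a generically surjective map from $E$, and $h^0(S,N) \leq h^0(S, Q^{\vee\vee})$ because the map $Q^{\vee\vee} \to N^{\vee\vee} = N$ induced on double duals is still generically surjective, hence... actually here I must be careful, so instead I would argue directly with $N = Q^{\vee\vee}$ when $r'=1$ and handle $r' \geq 2$ by first taking the image of $E$ in an appropriate quotient. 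Let me keep the essential case $r' = 1$ in the foreground, since the higher-rank case will follow by composing with a further quotient and noting $h^0$ can only drop.

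So assume we have a line bundle $N$ and a morphism $\psi : E \to N$ which is surjective off finitely many points. I claim $h^0(S,N) \geq 2$. Since $E$ is globally generated off a finite set, the composite $H^0(S,E) \otimes \oo_S \to E \to N$ is surjective off a finite set, so $N$ is globally generated off a finite set; in particular $h^0(S,N) \geq 1$ and $N$ is effective. Suppose for contradiction that $h^0(S,N) = 1$, so $N = \oo_S(D)$ with $D$ the unique effective divisor in $|N|$ (possibly $D = 0$). Let $K := \ker \psi$, a torsion-free sheaf of rank $\rk\, E - 1 \geq 1$, sitting in
\begin{equation*}
0 \to K \to E \to N \to 0
\end{equation*}
(exact off a finite set; I would replace $N$ by the actual image sheaf, which is $N \otimes \mathcal{I}_Z$ for a finite $Z$, to make it genuinely exact — this does not change any $h^0$ argument below since $h^0(N \otimes \mathcal I_Z) \leq h^0(N)$). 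Taking cohomology and using $h^2(S,E) = 0$ gives $h^2(S,K) \twoheadleftarrow h^2(S,E) = 0$... in the wrong direction; rather the long exact sequence gives $H^1(S,N) \to H^2(S,K) \to H^2(S,E) = 0$, so $h^2(S,K) \leq h^1(S,N)$. Meanwhile $H^0(S,E) \to H^0(S,N)$ — I want this to be surjective, which holds because $E$ (hence the image) is globally generated off finitely many points and $H^0(S,N)$ is at most one-dimensional spanned by a section that lifts. Then $H^1(S,K) \hookrightarrow H^1(S,E)$, but $h^1(S,E) = 0$ is \emph{not} assumed — only $h^2(S,E) = 0$ is. Here is the main obstacle: I need to extract a contradiction from $h^0(S,N) = 1$ using only global generation, $h^2(S,E) = 0$, and $\rk\, E \geq 2$, without control of $h^1(S,E)$.

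To overcome this I would use the rank hypothesis together with global generation more cleverly: if $h^0(N) = 1$ then the section of $N$ vanishes on $D$, and global generation of $E$ off a finite set forces, for a general point $x \notin D$, the fiber map $H^0(E) \to E_x \to N_x = \mathbb{C}$ to be nonzero, while at points of $D$ it must vanish — so the single section generating $H^0(N)$ does the job, fine, but the contradiction should come from $\det$: taking determinants in $0 \to K \to E \to N \to 0$ gives $\det E = N \otimes \det K$, and since $E$ is globally generated off finitely many points with $\rk\, E \geq 2$, the sheaf $K$ is globally generated off finitely many points too (quotients and, in this configuration, kernels inherit enough positivity — more precisely $\det K$ is globally generated off a finite set because $\bigwedge^{\rk K} K$ is, being a quotient of $\bigwedge^{\rk K} E$ after a general choice), hence $\det K$ is effective; if additionally $h^0(\det K) = 1$ and $h^0(N)=1$ one pushes toward $h^0(\det E) $ being small, contradicting that $\det E = L$ is ample with $h^0(L) \geq 2$ — but the lemma as stated does not put $E$'s determinant equal to an ample $L$! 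So the honest route, and what I would actually write, is: take a two-dimensional subspace phenomenon directly. Since $\rk\, E \geq 2$ and $E$ is globally generated off a finite set $\Sigma$, for a general point $p \in S$ we may choose a subsheaf $E' \subset E$ of rank $\rk\, E - 1$ containing the ``unwanted'' directions so that $E/E' $ maps onto $N$... I will instead invoke that a globally generated (off finitely many points) sheaf of rank $\geq 2$ on a $K3$ with $h^2 = 0$ cannot surject onto a line bundle with only one section, because such a line bundle would be, after removing the finite locus, a quotient of the globally generated $E$, forcing $N$ globally generated, hence either $N = \oo_S$ or $N$ has $h^0 \geq 2$ (on a $K3$ surface a nontrivial globally generated line bundle has $h^0 \geq 2$ since it is base-point free and nonzero, so defines a nonconstant morphism); and $N = \oo_S$ is excluded because then $\psi : E \to \oo_S$ nonzero gives $0 \neq \Hom(E,\oo_S) = H^2(S,E)^\vee$ by Serre duality, contradicting $h^2(S,E) = 0$. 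This last chain is exactly the contradiction I want, so the structure of the final proof is: (1) reduce to a generically surjective map $E \to N$, $N := Q^{\vee\vee}$ a line bundle, with $h^0(N) \leq h^0(Q^{\vee\vee})$; (2) global generation off a finite set passes to $N$, so $N$ is effective and globally generated off finitely many points; a line bundle on a $K3$ globally generated off a finite set is either $\oo_S$ or has $h^0 \geq 2$; (3) rule out $N \cong \oo_S$ via $\Hom(E, \oo_S) \cong H^2(S,E)^\vee = 0$. The one delicate point to nail down carefully is step (1) when $\rk\, Q \geq 2$, ensuring the quotient-to-line-bundle map keeps $h^0$ from increasing and keeps generic surjectivity — I expect this to be the only real content, the rest being the short argument just sketched.
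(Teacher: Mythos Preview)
Your treatment of the rank-one case is essentially correct and matches the paper's argument: if $\rk Q = 1$ then $Q^{\vee\vee} = N$ is a line bundle, $N$ is globally generated off a finite set (being a quotient of $E$ off finitely many points), hence either $N=\oo_S$ or $h^0(S,N)\geq 2$; and $N=\oo_S$ is excluded because Serre duality gives $\Hom(E,\oo_S)\cong H^2(S,E)^\vee=0$. The paper phrases the middle step slightly differently (no fixed components, hence base point free on a $K3$ by Saint-Donat), but the content is the same.

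The gap is in the case $\rk Q \geq 2$. You propose to pass to a line-bundle quotient $N$ of $Q$ and then argue $h^0(S,N)\leq h^0(S,Q^{\vee\vee})$, flagging this as ``the only real content.'' But this inequality is false in general: a quotient of a sheaf can have \emph{more} global sections than the sheaf itself (sections of $N$ need not lift to $Q^{\vee\vee}$). So the reduction you sketch does not go through, and you correctly sensed this when you wrote ``actually here I must be careful.''

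The good news is that this case is in fact the trivial one, and requires no reduction at all. Since $Q$ is a quotient of $E$, it is globally generated off a finite set; hence at a general point $p\in S$ the evaluation map $H^0(S,Q)\to Q_p\cong\mathbb{C}^{\rk Q}$ is surjective, so $h^0(S,Q)\geq \rk Q\geq 2$, and therefore $h^0(S,Q^{\vee\vee})\geq h^0(S,Q)\geq 2$. This is exactly how the paper disposes of the higher-rank case in one line. So the part you identified as the heart of the matter is actually a red herring; once you see this, your proof collapses to the same short argument as the paper's.
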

\begin{proof}
Being a quotient of $E$ , the sheaf $Q$ is globally generated off a finite set. If $\rk\, Q\geq 2$, this trivially implies $h^0(S,Q^{\vee\vee})\geq h^0(S,Q)\geq 2$. On the other hand, if $Q$ has rank $1$, then $Q=N\otimes I$, where $N\in\Pic(S)$ and $I$ is the ideal sheaf associated with a $0$-dimensional subscheme of $S$. Since $N$ is a quotient of $E$ off a finite number of points, it has no fixed components, thus it is base point free (cf. \cite{donat}). The statement follows by remarking that $N=Q^{\vee\vee}$ cannot be trivial because $h^2(S,E)=0$.
\end{proof}
Another useful result is the following one (cf. Lemma 3.1 in \cite{green}):
\begin{lem}\label{lem:evvai}
Let $E$ be a vector bundle of rank $r$ on $S$ which is globally generated off a finite number of points. If $h^2(S,E)=0$, then $h^0(S,\det E)\geq 2$.
\end{lem}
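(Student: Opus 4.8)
The plan is to argue by induction on the rank $r$, lowering $r$ by one at each step by dividing out a general global section and checking that both the hypotheses and the determinant survive the reduction; in this way everything comes down to the rank-$1$ case.

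The base case $r=1$ is elementary. Here $E=\det E$ is a line bundle, and it has a nonzero section (evaluate at a point where $E$ is globally generated), so $\det E\cong\oo_S(D)$ for an effective divisor $D$. If $h^0(S,\det E)=1$, then $D$ is the only member of $\vert\det E\vert$ and every point of $D$ is a base point; since the base locus is finite by hypothesis while $D$ is a divisor, this forces $D=0$ and $\det E\cong\oo_S$, contradicting $h^2(S,\det E)=h^0(S,\oo_S)=1$. (One could also invoke \cite{donat}, as in the proof of Lemma~\ref{lem:marghe}.)

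For the inductive step, suppose $r\geq 2$. Since $E$ is globally generated off finitely many points, $h^0(S,E)\geq r\geq 2$, so I may pick a general section $s\in H^0(S,E)$ and form the exact sequence $0\to\oo_S\to E\to Q\to 0$, where $\oo_S\hookrightarrow E$ is given by $s$ and $Q$ has rank $r-1$. A dimension count on the universal zero locus over the cofinite open set where $E$ is globally generated shows that a general $s$ has finite vanishing scheme, so the torsion subsheaf of $Q$ has finite length. Set $G:=Q^{\vee\vee}$, a vector bundle of rank $r-1$. Multiplicativity of the determinant, together with the vanishing of $c_1$ for a coherent sheaf with $0$-dimensional support, gives $\det E=\det Q=\det G$. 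The bundle $G$ is again globally generated off a finite set, since it agrees with the quotient $Q$ of $E$ outside finitely many points, and $h^2(S,G)=0$, because $H^2(S,Q)$ is a quotient of $H^2(S,E)=0$ and replacing $Q$ by its reflexive hull changes it only along a finite set, affecting neither $H^1$ nor $H^2$. Applying the induction hypothesis to $G$ then yields $h^0(S,\det E)=h^0(S,\det G)\geq 2$.

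The step I expect to need genuine care is the claim that a general section of $E$ vanishes at only finitely many points --- equivalently, that the inclusion $\oo_S\hookrightarrow E$ is saturated off a finite set --- since this is precisely what makes $\det Q=\det Q^{\vee\vee}$. The dimension count settles it uniformly, but in the write-up it may be cleaner to separate cases: for $r\geq 3$ a general section is nowhere vanishing on the locus of global generation, so $Q$ is already locally free there; for $r=2$ one has $Q\cong M\otimes I_Z$ for a line bundle $M$ and a finite subscheme $Z$, and then $M=Q^{\vee\vee}=\det E$ is handled directly by the base case. Everything else is formal work with exact sequences and cohomology.
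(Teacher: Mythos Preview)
Your inductive argument is correct, but the paper proceeds more directly. It first notes that the natural map $\wedge^r H^0(S,E)\otimes\oo_S\to\det E$ is surjective off a finite set, so $\det E$ has no fixed components and is therefore base point free on the $K3$ surface (via \cite{donat}); it then shows $\det E$ is nontrivial by taking a general $V\in G(r,H^0(S,E))$ and observing that $ev_V:V\otimes\oo_S\to E$ is injective but not an isomorphism (otherwise $E\cong\oo_S^{\oplus r}$ and $h^2(S,E)=r$), so that $\det ev_V$ is a section of $\det E$ vanishing on a nonzero effective divisor. Your approach trades the one-shot exterior-power trick and the appeal to \cite{donat} for an induction that peels off one section at a time and passes to the reflexive hull of the quotient; this costs you the bookkeeping about torsion and $H^2$ under $Q\mapsto Q^{\vee\vee}$, but it makes the role of the hypothesis $h^2(S,E)=0$ transparent already in rank $1$ and never needs the structure theory of linear systems on $K3$ surfaces beyond $\omega_S\cong\oo_S$. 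Both arguments ultimately hinge on the genericity of a choice of sections (a single $s$ for you, an $r$-plane $V$ for the paper), and either is a perfectly good proof.
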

\begin{proof}
Since the natural map $\wedge^rH^0(S,E)\otimes\oo_S\to\wedge^rE=\det E$ is surjective off a finite number of points, the line bundle $\det E$ is base point free. Therefore, it is enough to show that $\det E$ is non-trivial. This follows by remarking that, given a general $V\in G(r,H^0(S,E))$, the natural map $ev_V:V\otimes \oo_S\to E$ is injective but is not an isomorphism since $h^2(S,E)=0$. Therefore, $\det ev_V$ gives a section of $\det E$ vanishing on a non-zero effective divisor.
\end{proof}
Last but not least, we recall some notation and results from \cite{tata}. The Mukai vector of a sheaf $E\in\mathrm{Coh}(S)$ is defined as:
$$
v(E):=\mathrm{ch}(E)(1+\omega)=\mathrm{rk}(E)+c_1(E)+(\chi(E)-\mathrm{rk}(E))\omega\in H^*(S,\Z)=H^{2*}(S,\mathbb{Z}),
$$
where $H^4(S,\Z)$ is identified with $\Z$ by means of the fundamental cocycle $\omega$. The Mukai lattice is the pair $(H^*(S,\Z),\langle,\rangle)$, with $\langle,\rangle$ being the symmetric bilinear form on $H^*(S,\Z)$ whose definition is the following:
$$
\langle v,w\rangle:=-\int_Sv^*\wedge w,
$$
where, if $v=v^0+v^1+v^2$ with $v^i\in H^{2i}(S,\Z)$, we set $v^*:=v^0-v^1+v^2$. Given $E,F\in\mathrm{Coh}(S)$, we define the Euler characteristic of the pair $(E,F)$ as
$$
\chi(E,F):=\sum_{i=0}^2(-1)^i \dim\Ext^i(E,F),
$$
and it turns out that $\chi(E,F)=-\langle v(E),v(F)\rangle$.

Given a Mukai vector $v\in H^*(S,\Z)$, let $\M(v)$ be the moduli stack of coherent sheaves on $S$ of Mukai vector $v$. If $H\in\Pic(S)$ is ample, we denote by $\M_H(v)^{\mu ss}$ (resp. $\M_H(v)^{\mu s}$) the moduli stack parametrizing isomorphism classes of $\mu_H$-semistable (resp. $\mu_H$-stable) sheaves on $S$ with Mukai vector $v$. Recall that any $\mu_H$-stable sheaf is simple and that any irreducible component of $\M_H(v)^{\mu s}$  has dimension equal to $\langle v,v\rangle +1$. Moreover, if $\gcd(v^0,v^1. H)=1$, then $\mu_H$-semistability and $\mu_H$-stability coincide.

\section{Stability of Lazarsfeld-Mukai bundles of rank $2$}\label{bistrot}
Let $S$ be a smooth, projective $K3$ surface and consider a line bundle $L\in\mathrm{Ample}(S)$ such that a general curve $C\in\vert L\vert_s$ has genus $g$, Clifford dimension $1$ and maximal gonality $k=\left\lfloor \frac{g+3}{2}\right\rfloor$. In this section we prove that, if $C$ is general in its linear system and $\rho(g,1,d)>0$, the LM bundle associated with a general complete, base point free $g^1_d$ on $C$ is $\mu_L$-stable.

Fix a rank-$2$ LM bundle $E=E_{C,A}$ corresponding to a complete, base point free pencil $A\in W^1_d(C)$ with $C\in\vert L\vert_s$; Proposition \ref{prop:basic} implies that $$v(E)=2+c_1(L)+(g-d+1)\omega.$$
We assume $E$ is not $\mu_L$-stable.  In the case where $E$ is $\mu_L$-unstable (resp. properly $\mu_L$-semistable) we consider its HN filtration (resp. JH filtration) $0\subset M\subset E$, which gives a short exact sequence
\begin{equation}\label{domenica}
0\to M\to E\to N\otimes I_\xi\to 0,
\end{equation}
where $M$ and $N$ are two line bundles such that $\mu_L(M)>\mu_L(E)=g-1>\mu_L(N)$ (resp.  $\mu_L(M)=\mu_L(E)=\mu_L(N)$) and $I_\xi$ is the ideal sheaf of a $0$-dimensional subscheme $\xi\subset S$ of length $l=d-c_1(N)\cdot c_1(M)$. By Lemma \ref{lem:marghe}, we know that $h^0(S,N)\geq 2$. First of all, we prove the following:
\begin{lem}\label{lem:uova}
In the situation above, if general curves in $\vert L\vert_s$ have Clifford dimension $1$ and (constant) gonality $k$, one has $c_1(M)\cdot c_1(N)\geq k$.
\end{lem}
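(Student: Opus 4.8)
The plan is to restrict the destabilizing sequence (\ref{domenica}) to $C$ and to play the slope inequality coming from $M$ against the fact that a line bundle with at least two sections on the $K3$ surface $S$ has bounded self-intersection. Since $E$ is a vector bundle of rank $2$ with $\det E=L$, the sheaf $M$ is a line bundle and $c_1(M)+c_1(N)=c_1(L)$, so
$$c_1(M)\cdot c_1(N)=c_1(N)\cdot c_1(L)-c_1(N)^2=\deg(N\otimes\oo_C)-c_1(N)^2;$$
moreover $M$ is the maximal destabilizing sub-line-bundle (in the $\mu_L$-unstable case) or a Jordan--H\"older sub-line-bundle (in the properly $\mu_L$-semistable case), so that $c_1(M)\cdot c_1(L)=\mu_L(M)\geq\mu_L(E)=g-1$ and, dually, $\deg(N\otimes\oo_C)=\mu_L(N)\leq g-1<L^2$.

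First I would bound $c_1(N)^2$ from above. Since $h^0(S,N)\geq 2$ by Lemma \ref{lem:marghe} and $N$ is nontrivial and effective, one has $h^2(S,N)=h^0(S,N^\vee)=0$, hence $\chi(S,N)=2+c_1(N)^2/2$ gives $c_1(N)^2\leq 2h^0(S,N)-4$. As $\deg(N\otimes\oo_C)<L^2$, the class $c_1(N)-c_1(L)$ is not effective, so $h^0(S,N\otimes L^\vee)=0$ and the restriction map $H^0(S,N)\to H^0(C,N\otimes\oo_C)$ is injective; therefore $h^0(C,N\otimes\oo_C)\geq h^0(S,N)\geq 2$ and, in particular, $c_1(N)^2\leq 2h^0(C,N\otimes\oo_C)-4$.

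Next I would show that $N\otimes\oo_C$ contributes to the Clifford index of $C$; by the previous step this only requires $h^1(C,N\otimes\oo_C)\geq 2$. Using the adjunction isomorphism $\omega_C\cong L\otimes\oo_C$ together with Serre duality on $C$, one gets $h^1(C,N\otimes\oo_C)=h^0(C,M\otimes\oo_C)$ and $h^1(C,M\otimes\oo_C)=h^0(C,N\otimes\oo_C)$; Riemann--Roch on $C$ then yields
$$h^0(C,M\otimes\oo_C)=\bigl(c_1(M)\cdot c_1(L)-g+1\bigr)+h^1(C,M\otimes\oo_C)\geq (g-1)-g+1+2=2.$$
Hence both $N\otimes\oo_C$ and $M\otimes\oo_C$ contribute to the Clifford index, so $\Cliff(N\otimes\oo_C)\geq\Cliff(C)=k-2$, the last equality holding because $C$ has Clifford dimension $1$ and gonality $k$.

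Finally I would assemble the estimate: $\Cliff(N\otimes\oo_C)\geq k-2$ reads $\deg(N\otimes\oo_C)\geq k-4+2h^0(C,N\otimes\oo_C)$, and combining this with $c_1(N)^2\leq 2h^0(C,N\otimes\oo_C)-4$ in the identity above gives
$$c_1(M)\cdot c_1(N)=\deg(N\otimes\oo_C)-c_1(N)^2\geq\bigl(k-4+2h^0(C,N\otimes\oo_C)\bigr)-\bigl(2h^0(C,N\otimes\oo_C)-4\bigr)=k.$$
The step I expect to be the only real subtlety is the lower bound $h^0(C,M\otimes\oo_C)\geq 2$, i.e. the speciality of $N\otimes\oo_C$: a priori $M\otimes\oo_C$ could fail to have two sections, and it is exactly the slope bound $c_1(M)\cdot c_1(L)\geq g-1$ --- forced by $M$ being the topmost piece of the Harder--Narasimhan or Jordan--H\"older filtration of $E$ --- that excludes this. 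Everything else is Riemann--Roch and Serre duality bookkeeping on $C$ and on $S$.
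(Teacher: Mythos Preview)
Your argument is correct and follows the same Clifford-index strategy as the paper: establish that $N\otimes\oo_C$ contributes to $\Cliff(C)$ and then unwind $\Cliff(N\otimes\oo_C)\geq k-2$ into $c_1(M)\cdot c_1(N)\geq k$. The only organisational difference is that the paper treats separately the case $h^0(S,M)<2$ (where $c_1(M)^2<0$, so the slope inequality $\mu_L(M)\geq g-1$ alone already gives $c_1(M)\cdot c_1(N)\geq g+1\geq k$), whereas you bypass this case split by proving $h^0(C,M\otimes\oo_C)\geq 2$ directly on $C$ via Riemann--Roch together with $h^1(C,M\otimes\oo_C)=h^0(C,N\otimes\oo_C)\geq 2$. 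This makes your write-up slightly more uniform; the paper's version has the small side benefit of recording the marginally sharper bound $c_1(M)\cdot c_1(N)\geq k+2h^1(S,N)$, though it is not used elsewhere.
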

\begin{proof}
We remark that $h^2(S,M)=0$ since $\mu_L(M)>0$. Therefore, if $$2>h^0(S,M)\geq\chi(M)=2+c_1(M)^2/2,$$ then $c_1(M)^2<0$ and the inequality $\mu_L(M)\geq g-1$ implies $c_1(M)\cdot c_1(N)\geq g+1\geq k$.\\ 
From now on, we assume $h^0(S,M)\geq 2$. Since $\omega_C\otimes N^\vee\vert_C=M\otimes\oo_C$, the line bundle $N\vert_C$ contributes to $\Cliff(C)$. The short exact sequence 
$$
0\to M^\vee\to N\to N\otimes\oo_C\to 0
$$
gives $h^0(C,N\otimes\oo_C)\geq h^0(S,N)$. It follows that 
\begin{eqnarray*}
\Cliff(N\otimes\oo_C)&=&c_1(N)\cdot (c_1(N)+c_1(M))-2h^0(C,N\otimes\oo_C)+2\\
&\leq& c_1(N)^2+c_1(N)\cdot c_1(M)-2\chi(N)-2h^1(S,N)+2\\
&=&-2+c_1(N)\cdot c_1(M)-2h^1(S,N).
\end{eqnarray*}
Since $\Cliff(N\otimes\oo_C)\geq k-2$, then $c_1(M)\cdot c_1(N)\geq k+2h^1(S,N)\geq k$.
\end{proof}
Our goal is to count the number of moduli of $\mu_L$-unstable and properly $\mu_L$-semistable LM bundles of rank $2$. 

Fix a nonnegative integer $l$ and a non-trivial, globally generated line bundle $N$ on $S$ such that, having defined $M:=L\otimes N^\vee$, either $\mu_L(M)=\mu_L(N)=g-1$ or $\mu_L(M)>g-1>\mu_L(N)$. We consider the moduli stack $\mathcal{E}_{N,l}$ parametrizing filtrations $0\subset M\subset E$ with $[M]\in\M(v(M))(\mathbb{C})$ and $[E/M]\in\M(v(N\otimes I_\xi))(\mathbb{C})$, where $l(\xi)=l$. Note that, since both $N$ and $M$ are line bundles, the stack $\M(v(M))$ has a unique $\mathbb{C}$-point endowed with an automorphism group of dimension $1$, while $\M(v(N\otimes I_\xi))$ is corepresented by the Hilbert scheme $S^{[l]}$ parametrizing $0$-dimensional subschemes of $S$ of length $l$. Two filtrations $0\subset M\subset E$ and $0\subset M'\subset E'$ are equivalent whenever there exists a commutative diagram
$$
\xymatrix{
M\ar[r]\ar[d]_{\varphi_1}&E\ar[d]^{\varphi_2}\\
M'\ar[r]&E',\\
}
$$
where $\varphi_1$ and $\varphi_2$ are two isomorphisms (cf. \cite{bridgeland} for the proof that $\mathcal{E}_{N,l}$ is algebraic). The stack $\mathcal{E}_{N,l}$ can be alternatively described as the moduli stack of extensions of type (\ref{domenica}). Let $p:\E_{N,l}\to \M(v(M))\times \M(v(N\otimes I_\xi))$ be the natural morphism of stacks mapping the short exact sequence (\ref{domenica}) to $(M,N\otimes I_\xi)$. The fiber of $p$ over the $\mathbb{C}$-point $(M,N\otimes I_\xi)$ of $\M(v(M))\times \M(v(N\otimes I_\xi))$ is the quotient stack 
$$
[\Ext^1(N\otimes I_\xi,M)/\Hom(N\otimes I_\xi,M)],
$$
where the action of $\Hom(N\otimes I_\xi,M)$ over $\Ext^1(N\otimes I_\xi,M)$ is the trivial one (cf. \cite{bridgeland}); it follows that in general $p$ is not representable.

We define $\tilde{P}_{N,l}$ to be the closure of the image of $\E_{N,l}$ under the natural projection $q:\E_{N,L}\to\M(v(E))$, which maps the point of $\E_{N,L}$ given by (\ref{domenica}) to $[E]$. The morphism $q$ is representable (cf. proof of Lemma (4.1) in \cite{bridgeland}) and the fiber of $q$ over a $\mathbb{C}$-point of $\tilde{P}_{N,l}$ corresponding to $E$ is the Quot-scheme $\mathrm{Quot}_S(E,P)$, where $\mathrm{P}$ is the Hilbert polynomial of $N\otimes I_\xi$. We denote by $P_{N,l}$ the open substack of $\tilde{P}_{N,l}$ whose $\mathbb{C}$-points correspond to vector bundles $E$ satisfying $h^1(S,E)=h^2(S,E)=0$.

Let $\G_{N,l}\to P_{N,l}$ be the Grassmann bundle with fiber over a point $[E]\in P_{N,l}(\mathbb{C})$ equal to $G(2,H^0(S,E))$. A $\mathbb{C}$-point of $\G_{N,l}$ is a pair $(E,\Lambda)$ and comes endowed with an automorphism group equal to $\mathrm{Aut}(E)$. We consider the rational map $$h_{N,l}:\G_{N,l}\dashrightarrow \W^1_d(\vert L\vert ),$$ mapping a general point $(E,\Lambda)\in\G_{N,l}(\mathbb{C})$ to the pair $(C_\Lambda,A_\Lambda)$, where $C_\Lambda$ is the degeneracy locus of the evaluation map $ev_\Lambda:\Lambda\otimes\oo_S\to E$, which is injective for a general $\Lambda\in G(2,H^0(S,E))$, and $\omega_{C_\Lambda}\otimes A_\Lambda^\vee$ is the cokernel of $ev_\Lambda$. Notice that $d:=c_1(N)\cdot c_1(M)+l$. Since while mapping to $\W^1_d(\vert L\vert )$ we forget the automorphisms, the fiber of $h_{N,l}$ over $(C,A)$ is the quotient stack
$$
[\mathbb{P}(\Hom(E_{C,A},\omega_C\otimes A^\vee)^\circ)/\mathrm{Aut}(E_{C,A})],
$$
where $\Hom(E_{C,A},\omega_C\otimes A^\vee)^\circ$ denotes the open subgroup of $\Hom(E_{C,A},\omega_C\otimes A^\vee)$ consisting of those morphisms whose kernel is isomorphic to $\oo_S^{\oplus 2}$, and $\mathrm{Aut}(E_{C,A})$ acts on $\mathbb{P}(\Hom(E_{C,A},\omega_C\otimes A^\vee)^\circ)$ by composition. In particular, $h_{N,l}$ is not representable. As remarked in Section \ref{background1}, one has $$\Hom (E_{C,A},\omega_C\otimes A^\vee)\simeq H^0(S,E_{C,A}\otimes E_{C,A}^\vee);$$ 
it is trivial to check that
$$
\Hom(E_{C,A},\omega_C\otimes A^\vee)^\circ\simeq \mathrm{Aut}(E_{C,A}).
$$
Therefore, the action of $\mathrm{Aut}(E_{C,A})$ on $\mathbb{P}(\Hom(E_{C,A},\omega_C\otimes A^\vee)^\circ)$ is transitive and the stabilizer of any point is the subgroup generated by $\mathrm{Id}_{E_{C,A}}$; as a consequence, any fiber of $h_{N,l}$ has dimension $-1$ (cf. \cite{gomez} for the definition of the dimension of a locally Noetherian algebraic stack). We denote by $\W_{N,l}$ the closure of the image of $h_{N,l}$. The following holds:
\begin{prop}\label{prop:no}
Assume that $P_{N,l}$ be non-empty and let $\W$ be an irreducible component of $\W_{N,l}$. Then
$$
\dim\W\leq g+d-k,
$$
where $k$ is the gonality of any curve in $\vert L\vert_s$.
\end{prop}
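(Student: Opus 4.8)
The plan is to bound $\dim\W$ by running down the diagram of stacks $\E_{N,l}\to\M(v(M))\times\M(v(N\otimes I_\xi))$, $\E_{N,l}\to P_{N,l}\leftarrow\G_{N,l}\dashrightarrow\W^1_d(\vert L\vert)$ built above, keeping track of the (possibly negative) fibre dimensions at each stage. Since $\W_{N,l}$ is the closure of the image of $h_{N,l}$ and every fibre of $h_{N,l}$ has dimension $-1$, one has $\dim\W\le\dim\G_{N,l}+1$. The Grassmann bundle $\G_{N,l}\to P_{N,l}$ has fibre $G(2,H^0(S,E))$, and since $h^1(S,E)=h^2(S,E)=0$ on $P_{N,l}$ the number $h^0(S,E)=\chi(E)=g-d+3$ is constant there, so $\dim\G_{N,l}=\dim P_{N,l}+2(g-d+1)$. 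Finally $P_{N,l}$ is an open substack of $\tilde P_{N,l}=\overline{q(\E_{N,l})}$, and $q$ is representable with projective (Quot-scheme) fibres, hence $\dim P_{N,l}\le\dim\E_{N,l}$. Combining these,
$$
\dim\W\le\dim\E_{N,l}+2(g-d+1)+1 .
$$

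It then remains to compute $\dim\E_{N,l}$, which I would do via the morphism $p\colon\E_{N,l}\to\M(v(M))\times\M(v(N\otimes I_\xi))$. The base is the product of a point carrying a $1$-dimensional automorphism group and the stacky version of $S^{[l]}$, so it has dimension $(-1)+(2l-1)=2l-2$. The fibre of $p$ is the quotient stack $[\Ext^1(N\otimes I_\xi,M)/\Hom(N\otimes I_\xi,M)]$ with trivial action, hence of dimension $\dim\Ext^1(N\otimes I_\xi,M)-\dim\Hom(N\otimes I_\xi,M)=-\chi(N\otimes I_\xi,M)+\dim\Ext^2(N\otimes I_\xi,M)$. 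By Serre duality $\Ext^2(N\otimes I_\xi,M)\cong\Hom(M,N\otimes I_\xi)^\vee$, and this vanishes: if $E$ is $\mu_L$-unstable this is Proposition \ref{prop:morfismi}(\ref{utile}) since $\mu_L(M)>\mu_L(N\otimes I_\xi)$; if $E$ is properly $\mu_L$-semistable, a nonzero map $M\to N\otimes I_\xi$ would be injective with torsion cokernel (Proposition \ref{prop:morfismi}), forcing $\xi=\emptyset$ and $N\cong M$, hence $c_1(L)=2c_1(M)$ and $d=(g-1)/2$, a situation in which $\rho(g,1,d)<0$ and that I would set aside. Using $c_1(M)+c_1(N)=c_1(L)$, $c_1(L)^2=2g-2$ and $c_1(M)\cdot c_1(N)=d-l$, the identity $\chi(N\otimes I_\xi,M)=-\langle v(N\otimes I_\xi),v(M)\rangle$ gives $\chi(N\otimes I_\xi,M)=g+1-2d+l$, whence
$$
\dim\E_{N,l}=(2l-2)+(2d-g-1-l)=l+2d-g-3 .
$$

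Plugging this into the first display yields $\dim\W\le(l+2d-g-3)+2(g-d+1)+1=g+l$. To finish I would invoke Lemma \ref{lem:uova}: since $l=d-c_1(M)\cdot c_1(N)$ and $c_1(M)\cdot c_1(N)\ge k$, we obtain $l\le d-k$, and therefore $\dim\W\le g+d-k$.

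The steps I expect to be delicate are the handling of the stack dimensions and the fibre-dimension vanishing. On the one hand one must use carefully that $q$ is representable (so its fibres, being Quot schemes, have non-negative dimension, which is what makes $\dim P_{N,l}\le\dim\E_{N,l}$ legitimate) and that the transitivity of the $\mathrm{Aut}(E_{C,A})$-action on $\mathbb{P}(\Hom(E_{C,A},\omega_C\otimes A^\vee)^\circ)$ pins the fibres of $h_{N,l}$ to dimension exactly $-1$. On the other hand one needs $\Ext^2(N\otimes I_\xi,M)=0$, whose only exceptional case ($\xi=\emptyset$ and $N\cong M$) has to be excluded by hand. Everything else is the Riemann--Roch computation of $\chi(N\otimes I_\xi,M)$ above together with Lemma \ref{lem:uova}.
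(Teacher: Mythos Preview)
Your approach is the same as the paper's: compute $\dim\G_{N,l}$ by chasing the chain $\M(v(M))\times\M(v(N\otimes I_\xi))\leftarrow\E_{N,l}\to P_{N,l}\leftarrow\G_{N,l}\dashrightarrow\W^1_d(\vert L\vert)$, and then invoke Lemma~\ref{lem:uova} to get $c_1(M)\cdot c_1(N)\ge k$ (equivalently $l\le d-k$). Your Riemann--Roch numerics and the resulting bound $\dim\W\le g+l\le g+d-k$ match the paper's computation exactly.

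The one place where you diverge is the exceptional case $M\simeq N$, $\xi=\emptyset$, which you set aside by observing $\rho(g,1,d)<0$ there. The proposition as stated carries no hypothesis on $\rho$, and the paper does not exclude this case. Instead it notes that there the fibre of $p$ has dimension $-\chi(N\otimes I_\xi,M)+1$ (because $\Ext^2(N\otimes I_\xi,M)\cong\Hom(N,N)$ is one-dimensional), \emph{and simultaneously} the Quot-scheme fibre of $q$ is smooth of dimension $1$ (because $\Hom(N,N)\cong\mathbb C$ and $\Ext^1(N,N)\cong H^1(S,\oo_S)=0$). These two contributions of $+1$ cancel, so $\dim P_{N,l}=2l-2+\langle v(M),v(N\otimes I_\xi)\rangle$ holds in all cases and the rest of the argument goes through unchanged. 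Your inequality $\dim P_{N,l}\le\dim\E_{N,l}$ alone loses exactly this $1$; to prove the proposition as stated you should replace it by the exact computation of the Quot-scheme fibre in this exceptional case.
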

\begin{proof}
Proposition \ref{prop:morfismi}, together with the fact that $h^0(S,I_\xi)= 0$ if $l>0$, implies that
\begin{equation*}
\dim\Hom(M,N\otimes I_\xi)=\left\{\begin{array}{ll}1&\textrm{if }M\simeq N,\,\xi=\emptyset\\0&\textrm{otherwise}\end{array}\right..
\end{equation*}
It follows that the dimension of the fibers of $p$ is constant and equals $-\chi(M,N\otimes I_\xi)$, unless $M\simeq N$ and $l=0$, in which case it is $-\chi(M,N\otimes I_\xi)+1$. 

Regarding the fibers of $q$, it is well known (cf. \cite{lehn} Proposition 2.2.8) that, given $[\varphi:E\to N\otimes I_\xi]\in\mathrm{Quot}_S(E,P)$, the following holds:
\begin{equation}\label{quot}\begin{array}{lll}
\dim\Hom(K,N\otimes I_\xi)-\dim\Ext^1(K,N\otimes I_\xi)&\leq& \dim_{[\xi]}\mathrm{Quot}_S(E,P)\\&\leq&\dim\Hom(K,N\otimes I_\xi),
\end{array}\end{equation}
where $K=\ker\varphi$; moreover, if $\Ext^1(K,N\otimes I_\xi)=0$, then $\mathrm{Quot}_S(E,P)$ is smooth in $[\varphi]$ of dimension equal to $\dim\Hom(K,N\otimes I_\xi)$. Since $K\simeq M$, if $M\simeq N$ and $l=0$, the fibers of $q$ are smooth of dimension $1$; indeed, $\Ext^1(N,N)\simeq H^1(S,\oo_S)=0$. Otherwise, the fibers of $q$ are $0$-dimensional. It follows that, if $P_{N,l}$ is non-empty, then:
\begin{eqnarray*}
\dim \G_{N,l}&=&\dim P_{N,l}+2(g-d+1)\\
&=&\dim\M(v(M))+\dim\M(v(N\otimes I_\xi))+\langle v(M),v(N\otimes I_\xi)\rangle+2(g-d+1)\\
&=&2l-2+c_1(M)\cdot c_1(N)-\frac{c_1(M)^2}{2}-\frac{c_1(N)^2}{2}-2+l+2(g-d+1)\\
&=&3l+2g-2d-2-(g-1)+2c_1(M)\cdot c_1(N)\\
&=&g+d-1-c_1(N)\cdot c_1(M)\\
&\leq&g+d-1-k,
\end{eqnarray*}
where we have used that $c_1(M)+c_1(N)=c_1(L)$ and $d=c_1(M)\cdot c_1(N)+l$, and the last inequality follows from Lemma \ref{lem:uova}. The statement is a consequence of the fact that the fibers of $h_{N,l}$ are quotient stacks of dimension equal to $-1$.
\end{proof}
We can finally prove the following result:
\begin{thm}\label{thm:stable}
Assume that general curves in $\vert L\vert_s$ have Clifford dimension $1$ and maximal gonality $k=\left\lfloor \frac{g+3}{2}\right\rfloor$. 
\begin{itemize}
\item If $\rho(g,1,d)>0$, any dominating component of $\W^1_d(\vert L\vert)$ corresponds to $\mu_L$-stable LM bundles. In particular, if $C\in\vert L\vert_s$ is general, the variety $W^1_d(C)$ is reduced and has the expected dimension $\rho(g,1,d)$. 
\item If $\rho(g,1,k)=0$ and $C\in\vert L\vert_s$ is general, then $W^1_k(C)$ has dimension $0$. 
\end{itemize}
\end{thm}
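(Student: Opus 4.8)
The engine of the proof is Proposition \ref{prop:no}, which I would run against the lower bound $\dim\W\geq g+\rho(g,1,d)$ satisfied by any dominating component $\W$ of $\W^1_d(\vert L\vert)$ (every component of the relative Brill-Noether scheme over $\vert L\vert_s$ has at least the expected dimension), the point being that the maximality of the gonality makes the two bounds incompatible as soon as $\rho(g,1,d)>0$. Concretely $\rho(g,1,d)=2d-g-2$, and a short check on the parity of $g$ gives $g+2-k=g+2-\lfloor(g+3)/2\rfloor\leq(g+2)/2$; hence $\rho(g,1,d)>0$, i.e. $d>(g+2)/2$, already forces $d>g+2-k$, that is
$$
g+d-k<g+\rho(g,1,d).
$$
By contrast $\rho(g,1,k)=0$ holds precisely when $g$ is even, and then one only obtains $g+k-k=g=g+\rho(g,1,k)$, which is why the second bullet asserts merely $0$-dimensionality. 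Since no component of $\W^1_d(\vert L\vert)$ is entirely contained in $\W^2_d(\vert L\vert)$, a general $(C,A)$ in any dominating component has $h^0(C,A)=2$.

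For the first bullet I would argue by contradiction. Suppose a dominating component $\W$ has, for general $(C,A)$, the series $A$ base point free and $E_{C,A}$ not $\mu_L$-stable. Taking the Harder-Narasimhan filtration of $E_{C,A}$ (if it is $\mu_L$-unstable) or a Jordan-H\"older filtration (if it is properly $\mu_L$-semistable) places $E_{C,A}$ in a short exact sequence of the shape (\ref{domenica}); by Lemma \ref{lem:marghe} the quotient line bundle $N$ is globally generated and nontrivial, by Lemma \ref{lem:uova} one has $c_1(M)\cdot c_1(N)\geq k$, and combined with $c_1(M)+c_1(N)=c_1(L)$ the Hodge index theorem leaves only finitely many possibilities for the pair $(N,l)$. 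Thus $\W$ sits inside one of the finitely many stacks $\W_{N,l}$, and Proposition \ref{prop:no} yields $\dim\W\leq g+d-k$, contradicting $\dim\W\geq g+\rho(g,1,d)>g+d-k$. To upgrade this to the assertion that $W^1_d(C)$ is reduced of dimension $\rho(g,1,d)$ for general $C$, I would, following the remark at the end of Section \ref{background1}, also exclude dominating components whose general member $A$ is \emph{not} base point free and satisfies $\ker\mu_{0,A}\neq 0$: writing $B$ for the (generically constant) base divisor, the bundle $A(-B)$ is a base point free $g^1_{d'}$ with $d'<d$ and $\ker\mu_{0,A(-B)}\neq 0$, hence its LM bundle is non-simple and in particular not $\mu_L$-stable; since the gonality is $k$ we have $d'\geq k$, the relevant component of $\W^1_{d'}(\vert L\vert)$ has dimension $\leq g+d'-k$ by the case already treated, and as the fibres of $(C,A)\mapsto(C,A(-B))$ are at most $(d-d')$-dimensional we land back at $\dim\W\leq g+d-k<g+\rho(g,1,d)$.

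For the second bullet $g$ is even and $\rho(g,1,k)=0$. I would suppose $W^1_k(C)$ has an irreducible component of positive dimension for general $C\in\vert L\vert_s$, producing a dominating component $\W$ of $\W^1_k(\vert L\vert)$ with $\dim\W\geq g+1$, and then distinguish three cases for a general $(C,A)\in\W$. If $A$ is not base point free, then $A(-B)$ is a $g^1_{d'}$ with $d'<k$, impossible because the gonality equals $k$. If $A$ is base point free and $E_{C,A}$ is simple, then $\mu_{0,A}$ is injective, so the fibre over a general $C$ is reduced of the expected dimension $\rho(g,1,k)=0$ and $\dim\W=g$. If $A$ is base point free and $E_{C,A}$ is non-simple, then $E_{C,A}$ fails to be $\mu_L$-stable, so exactly as above $\W$ lies in some $\W_{N,l}$ and Proposition \ref{prop:no} gives $\dim\W\leq g+k-k=g$. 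Each case contradicts $\dim\W\geq g+1$, so $W^1_k(C)$ is $0$-dimensional. I expect the real obstacle to be this borderline regime $\rho=0$: there the dimension estimate on its own no longer excludes bundles that are not $\mu_L$-stable, and one must instead trade it for the fact that simplicity of $E_{C,A}$ forces $\mu_{0,A}$ injective and hence the fibre to have exactly the expected dimension; a secondary source of care, already in the first bullet, is handling the locus where $A$ is not base point free and keeping the bookkeeping of $B$ and $d'$ honest.
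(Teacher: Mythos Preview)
Your proof is correct and follows the same strategy as the paper: both use Proposition~\ref{prop:no} to bound $\dim\W\leq g+d-k$ and compare with the lower bound $g+\rho(g,1,d)$ for a dominating component, then dispose of the non-base-point-free locus by passing to $A(-B)$ in lower degree. Your treatment is simply more explicit in places---the Hodge-index finiteness of $(N,l)$, the three-case split for $\rho=0$, and the fibre-dimension bookkeeping for non-bpf $A$---where the paper writes only ``by induction on $d$''.
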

\begin{proof}
When $\rho(g,1,d)>0$, we show that no component $\W$ of $\W^1_d(\vert L\vert_s)$ corresponding to either $\mu_L$-unstable or properly $\mu_L$-semistable LM bundles dominates $\vert L\vert$. Proposition \ref{prop:no} gives:
$$
\dim\W\leq g+d-k\leq g+d-\frac{g+2}{2}.
$$
Our claim follows by remarking that any dominating component of $\W^1_d(\vert L\vert)$ has dimension at least $g+\rho(g,1,d)$ and that $\rho(g,1,d)>d-\frac{g+2}{2}$ whenever $d>\frac{g+2}{2}$. 

If $k=\frac{g+2}{2}$, that is, $\rho(g,1,k)=0$, our parameter count shows that any dominating component of $\W^1_k(\vert L\vert)$ has dimension $g$;   hence, if $C\in\vert L\vert$ is general, $W^1_k(C)$ is $0$-dimensional, even though not necessarily reduced. By induction on $d$, one excludes the existence of components of $\W^1_d(\vert L\vert)$ whose general points correspond to linear series which are not base point free.
\end{proof}

\section{Lazarsfeld-Mukai bundles of rank $3$ which are not $\mu_L$-stable}\label{tempo}
We fix a LM bundle $E=E_{C,A}$ associated with a complete, base point free $g^2_d$ on a smooth connected curve $C\in \vert L\vert_s$ with $L\in\mathrm{Ample}(S)$. By Proposition \ref{prop:basic}, we have $$v(E)=3+c_1(L)+(2+g-d)\omega,$$ 
where $g=g(C)$. We assume that $E$ is not $\mu_L$-stable and, in the case where it is $\mu_L$-unstable, we look at its HN filtration:  
$$0=E_0\subset E_1\subset\ldots\subset E_s=E.$$
On the other hand, if $E$ is properly $\mu_L$-semistable, we consider its JH filtration:
$$0=JH_0(E)\subset JH_1(E)\subset\ldots\subset JH_s(E)=E.$$

We first consider the cases where either $E$ is properly $\mu_L$-semistable and $JH_1(E)$ has rank $2$, or $E$ is $\mu_L$-unstable, $\mathrm{rk}\,E_1=2$ and $E_1$ is $\mu_L$-stable. Under these hypotheses, $E$ sits in the following short exact sequence:
\begin{equation}\label{rango2}
0\to M\to E\to N\otimes I_\xi\to 0,
\end{equation}
where $M=JH_1(E)$ (resp. $M=E_1$) is a $\mu_L$-stable vector bundle of rank $2$, $N$ is a line bundle and $I_\xi$ is the ideal sheaf of a $0$-dimensional subscheme $\xi\subset S$. Moreover, 
\begin{equation}\label{bank}
\mu_L(M)\geq\mu_L(E)=\frac{2g-2}{3}\geq\mu_L(N\otimes I_\xi)=\mu_L(N),
\end{equation}
with the former inequality being strict whenever the latter one is. We have that $c_1(L)=c_1(E)=c_1(M)+c_1(N)$ and $d=c_2(E)=c_1(N)\cdot c_1(M)+l(\xi)+c_2(M)$, where $l(\xi)$ denotes the length of $\xi$. We prove the following:
\begin{lem}\label{lem:det}
Assume a general curve $C\in\vert L\vert_s$ has Clifford dimension $1$ and gonality $k$. In the above situation, one has  $c_1(N)\cdot c_1(M)\geq k$ and 
\begin{equation}\label{nene}
d\geq \frac{3}{4}k+\frac{7}{6}+\frac{g}{3}.
\end{equation}
\end{lem}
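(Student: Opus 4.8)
The plan is to play three facts off one another: (a) that $\det M\otimes\oo_C$ contributes to $\Cliff(C)$, which forces $c_1(M)\cdot c_1(N)$ to be large; (b) the slope condition $\mu_L(M)\geq\mu_L(E)=(2g-2)/3$, which ties $c_1(M)^2+c_1(M)\cdot c_1(N)$ to $g$; and (c) the $\mu_L$-stability, hence simplicity, of $M$, which through $\langle v(M),v(M)\rangle\geq -2$ gives a lower bound for $c_2(M)$. First I would record the numerology of (\ref{rango2}): $c_1(L)=c_1(M)+c_1(N)$ and $d=c_1(M)\cdot c_1(N)+l(\xi)+c_2(M)\geq c_1(M)\cdot c_1(N)+c_2(M)$; moreover Lemma \ref{lem:marghe}, applied to the surjection $E\to N\otimes I_\xi$ (using that by Proposition \ref{prop:basic} $E$ is globally generated off a finite set with $h^2(S,E)=0$), gives $h^0(S,N)\geq 2$ and that $N$ is a nontrivial, globally generated line bundle. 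Since $\mu_L(M)\geq\mu_L(E)>0$ and $M$ is $\mu_L$-semistable, $h^2(S,M)=0$; and since $c_1(M)\cdot c_1(L)=2\mu_L(M)>0$ and $c_1(N)\cdot c_1(L)>0$, the classes $-c_1(M)$ and $-c_1(N)$ are not effective, so $h^0(S,N^\vee)=0$ and $h^2(S,\det M)=h^0(S,(\det M)^\vee)=0$.

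For the bound $c_1(M)\cdot c_1(N)\geq k$ I would argue as in Lemma \ref{lem:uova}, but now with $h^0(S,\det M)$ playing the role of $h^0(S,M)$. If $c_1(M)^2\geq 0$, then $h^0(S,\det M)\geq\chi(\det M)=2+c_1(M)^2/2\geq 2$; using $\omega_C=L\otimes\oo_C$, $\det M=L\otimes N^\vee$, and the vanishings above, restricting to $C$ the sequences $0\to N^\vee\to\det M\to\det M\otimes\oo_C\to 0$ and $0\to(\det M)^\vee\to N\to N\otimes\oo_C\to 0$ shows $h^0(C,\det M\otimes\oo_C)\geq h^0(S,\det M)\geq 2$ and $h^1(C,\det M\otimes\oo_C)=h^0(C,N\otimes\oo_C)\geq h^0(S,N)\geq 2$, so $\det M\otimes\oo_C$ contributes to $\Cliff(C)=k-2$. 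Expanding $\Cliff(\det M\otimes\oo_C)=c_1(M)\cdot c_1(L)-2h^0(C,\det M\otimes\oo_C)+2\leq c_1(M)^2+c_1(M)\cdot c_1(N)-2(2+c_1(M)^2/2)+2=c_1(M)\cdot c_1(N)-2$ and comparing with $k-2$ gives $c_1(M)\cdot c_1(N)\geq k$. If instead $c_1(M)^2<0$, then $c_1(M)^2\leq -2$ (the intersection form of a $K3$ surface is even), whence $c_1(M)\cdot c_1(N)=c_1(M)\cdot c_1(L)-c_1(M)^2\geq(4g-4)/3+2$, which already dominates $k=\lfloor(g+3)/2\rfloor$.

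For (\ref{nene}) the decisive input is that $M$, being $\mu_L$-stable, is simple: by Serre duality on the $K3$ surface $\dim\Ext^2(M,M)=\dim\Hom(M,M)=1$, so $-\langle v(M),v(M)\rangle=\chi(M,M)=2-\dim\Ext^1(M,M)\leq 2$; substituting $v(M)=2+c_1(M)+(\chi(M)-2)\omega$ with $\chi(M)=4+c_1(M)^2/2-c_2(M)$ yields $\langle v(M),v(M)\rangle=-c_1(M)^2+4c_2(M)-8\geq -2$, i.e. $c_2(M)\geq(c_1(M)^2+6)/4$. Then
$$
d\;\geq\;c_1(M)\cdot c_1(N)+c_2(M)\;\geq\;\tfrac34\,c_1(M)\cdot c_1(N)+\tfrac14\bigl(c_1(M)^2+c_1(M)\cdot c_1(N)\bigr)+\tfrac32\;\geq\;\tfrac34 k+\tfrac14\cdot\tfrac{4g-4}{3}+\tfrac32,
$$
where we used $c_1(M)\cdot c_1(N)\geq k$ and $c_1(M)^2+c_1(M)\cdot c_1(N)=c_1(M)\cdot c_1(L)=2\mu_L(M)\geq(4g-4)/3$; the right-hand side equals exactly $\tfrac34 k+\tfrac{g}{3}+\tfrac76$. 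The one point that needs care is that the Clifford-index argument for $c_1(M)\cdot c_1(N)\geq k$ degenerates precisely when $c_1(M)^2<0$, so there it must be replaced by the slope bound; and the constant $7/6$ survives only because the bound $c_2(M)\geq(c_1(M)^2+6)/4$ coming from simplicity of $M$ is stronger, by exactly $3/2$, than the Bogomolov inequality $c_2(M)\geq c_1(M)^2/4$.
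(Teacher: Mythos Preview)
Your proof is correct and follows essentially the same route as the paper's. The only cosmetic differences are that you compute $\Cliff(\det M\otimes\oo_C)$ directly whereas the paper computes $\Cliff(N\otimes\oo_C)$ (these are Serre dual on $C$, hence have the same Clifford index), and your case split is on the sign of $c_1(M)^2$ rather than on whether $h^0(S,\det M)\geq 2$; since $h^2(S,\det M)=0$ these two dichotomies differ only on the harmless overlap where $c_1(M)^2<0$ but $h^0(S,\det M)\geq 2$, and your slope argument covers that case as well.
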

\begin{proof}
As $E$ is globally generated off a finite number of points, $N$ is base point free and non-trivial, thus $h^0(S,N)\geq 2$ and $\mu_L(N)>0$. The inequality $\mu_L(M)>0$ implies that $h^2(S,M)=0$ and, since $\mu_L(\det M)=2\mu_L(M)$, we have that $h^2(S,\det M)=0$, too. Therefore, $h^0(S,\det M)\geq\chi(\det M)=2+c_1(M)^2/2$ and, if $h^0(S,\det M)< 2$, then $c_1(M)^2\leq-2$ and $c_1(N)\cdot c_1(M)\geq(4g+2)/3>k$ by the first inequality in (\ref{bank}), which gives
\begin{equation}
c_1(M)^2+c_1(N)\cdot c_1(M)\geq\frac{4g-4}{3}.
\end{equation}
On the other hand, if $h^0(S,\det M)\geq 2$, then $N\vert_C$ contributes to $\Cliff(C)$ and one shows, as in the proof of Lemma \ref{lem:uova}, that $c_1(N)\cdot c_1(M)\geq k+2h^1(S,N)\geq k$. 

The $\mu_L$-stability of $M$ implies that
$$
-2\leq\langle v(M),v(M)\rangle=c_1(M)^2-4\chi(M)+8=4c_2(M)-c_1(M)^2-8.
$$
Therefore, we have
\begin{equation*}
d= c_1(N)\cdot c_1(M)+c_2(M)+l(\xi)\geq c_1(N)\cdot c_1(M)+\frac{c_1(M)^2}{4}+\frac{6}{4}\geq  \frac{3}{4}k+\frac{7}{6}+\frac{g}{3};
\end{equation*}
this concludes the proof.
\end{proof}
\vspace{0.5cm}

Now, we assume that either $E$ is $\mu_L$-unstable, $\mathrm{rk}\,E_1=1$ and $E/E_1$ is $\mu_L$-stable, or $E$ is properly $\mu_L$-semistable and its JH filtration is of type $0\subset JH_1(E)\subset E$ with $\mathrm{rk}\,JH_1(E)=1$. Denoting by $N$ the line bundle $E_1$ (resp. $JH_1(E)$), one has a short exact sequence:
\begin{equation}\label{rango1}
0\to N\to E\to E/N\to 0,
\end{equation}
where $E/N$ is a rank-$2$, $\mu_L$-stable, torsion free sheaf on $S$ such that $$\mu_L(N)\geq\mu_L(E)\geq\mu_L(E/N),$$ and either both inequalities are strict, or none is. We prove the following:
\begin{lem}\label{lem:endo1}
In the above situation, if a general curve $C\in\vert L\vert_s$ has Clifford dimension $1$ and gonality $k$, then $c_1(N)\cdot c_1(E/N)\geq k$.
\end{lem}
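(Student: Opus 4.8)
The plan is to follow closely the pattern of Lemmas \ref{lem:uova} and \ref{lem:det}; the only genuinely new point is that here the line bundle carrying many sections is the \emph{determinant} of the rank-$2$ quotient rather than a sub- or quotient line bundle. Set $M:=\det(E/N)=L\otimes N^\vee$, so that $c_1(N)\cdot c_1(E/N)=c_1(N)\cdot c_1(M)$, while $c_1(M)+c_1(N)=c_1(L)$ and $c_1(L)^2=2g-2$; the goal is to show $c_1(N)\cdot c_1(M)\geq k$.

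First I would record the positivity of the relevant slopes. Since $E/N$ is a rank-$2$ quotient of $E$ and $E$ is globally generated off a finite set, $E/N$ has at least two independent global sections, so there is an injection $\oo_S\hookrightarrow E/N$; $\mu_L$-stability of $E/N$ then forces $\mu_L(E/N)>0$, whence $\mu_L(M)=2\mu_L(E/N)>0$, while $\mu_L(N)\geq\mu_L(E)=(2g-2)/3>0$ by hypothesis. Consequently $h^0(S,N^\vee)=h^0(S,M^\vee)=0$ and, $N$ and $M$ being line bundles (hence $\mu_L$-semistable) of positive slope, $h^2(S,N)=h^2(S,M)=0$. Next I would prove $h^0(S,M)\geq 2$: the reflexive hull $(E/N)^{\vee\vee}$ is a vector bundle (the surface being smooth), it is still globally generated off a finite set (alternatively invoke Lemma \ref{lem:marghe} with $Q=E/N$), and it is still $\mu_L$-stable of positive slope since stability depends only on codimension-one behaviour; hence $h^2(S,(E/N)^{\vee\vee})=0$ and Lemma \ref{lem:evvai} yields $h^0(S,M)=h^0\bigl(S,\det((E/N)^{\vee\vee})\bigr)\geq 2$.

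The bound then splits into two cases according to $h^0(S,N)$. If $h^0(S,N)\leq 1$, then $h^2(S,N)=0$ and Riemann--Roch give $2+c_1(N)^2/2\leq h^0(S,N)\leq 1$, so $c_1(N)^2\leq -2$; combined with $c_1(N)\cdot c_1(L)=\mu_L(N)\geq (2g-2)/3$ this yields $c_1(N)\cdot c_1(M)=c_1(N)\cdot c_1(L)-c_1(N)^2\geq (2g+4)/3\geq (g+3)/2\geq k$. If instead $h^0(S,N)\geq 2$, the restriction sequences $0\to N^\vee\to M\to M\otimes\oo_C\to 0$ and $0\to M^\vee\to N\to N\otimes\oo_C\to 0$, whose left-hand terms have no sections, give $h^0(C,M\otimes\oo_C)\geq h^0(S,M)\geq 2$ and $h^0(C,N\otimes\oo_C)\geq h^0(S,N)\geq 2$; since $\omega_C\otimes(M\otimes\oo_C)^\vee=N\otimes\oo_C$ by adjunction, the line bundle $M\otimes\oo_C$ contributes to $\Cliff(C)=k-2$ (the Clifford index being constant in $\vert L\vert_s$). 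Arguing exactly as in Lemma \ref{lem:uova}, with $h^0(S,M)\geq\chi(S,M)+h^1(S,M)=2+c_1(M)^2/2+h^1(S,M)$, one obtains $\Cliff(M\otimes\oo_C)\leq c_1(M)\cdot c_1(N)-2-2h^1(S,M)$, hence $c_1(M)\cdot c_1(N)\geq k+2h^1(S,M)\geq k$. Since $c_1(N)\cdot c_1(E/N)=c_1(N)\cdot c_1(M)$, this proves the lemma.

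The step I expect to require the most care is the proof that $h^0(S,M)\geq 2$: one must be sure that the reflexive hull of $E/N$ inherits both global generation off a finite set and $\mu_L$-stability, so that Lemma \ref{lem:evvai} applies to its determinant. By contrast, the Clifford-index bookkeeping in the case $h^0(S,N)\geq 2$ and the (possibly vacuous) case $h^0(S,N)\leq 1$ are routine variations on the rank-$2$ computations already carried out in Lemmas \ref{lem:uova} and \ref{lem:det}.
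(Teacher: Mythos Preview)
Your proof is correct and follows essentially the same approach as the paper's: both use Lemma \ref{lem:evvai} applied to the reflexive hull $(E/N)^{\vee\vee}$ to get $h^0(S,\det(E/N))\geq 2$, then split into cases according to $h^0(S,N)$ and invoke the Clifford-index computation of Lemma \ref{lem:uova}. The only cosmetic difference is that the paper phrases the second case as ``$N\otimes\oo_C$ contributes to the Clifford index'' while you work with the Serre-dual line bundle $M\otimes\oo_C$; since $\Cliff(M\otimes\oo_C)=\Cliff(N\otimes\oo_C)$, the two computations are equivalent.
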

\begin{proof}
As in the proof of Lemma \ref{lem:marghe} one shows that $h^0(S,E/N)\geq 2$. Since $E/N$ is stable, then $\mu_L(E/N)>0$ and $h^2(S,E/N)=0$. Moreover, the vector bundle $(E/N)^{\vee\vee}$ is globally generated off a finite number of points and $h^0(S,\det (E/N))\geq 2$ by Lemma \ref{lem:evvai} because $\det (E/N):=\det (E/N)^{\vee\vee}$. 

Since $\mu_L(N)=c_1(N)\cdot (c_1(N)+c_1(E/N))\geq (2g-2)/3>0$, we have $h^2(S,N)=0$.\\ Hence, if $h^0(S,N)<2$, then $c_1(N)^2<0$ and $c_1(N)\cdot c_1(E/N)\geq (2g+4)/3> k$. Otherwise, $N\otimes\oo_C$ contributes to the Clifford index and this implies $c_1(N)\cdot c_1(E/N)\geq k$, too.
\end{proof}
\vspace{0.5cm}

The cases still to be considered are the following ones:
\renewcommand{\theenumi}{\roman{enumi}}
\begin{enumerate}
\item\label{one} $E$ is $\mu_L$-unstable with HN filtration $0\subset E_1\subset E_2\subset E$.
\item\label{two} $E$ is properly $\mu_L$-semistable with JH filtration $0\subset JH_1(E)\subset JH_2(E)\subset E$.
\item\label{three} $E$ is $\mu_L$-unstable with HN filtration $0\subset E_1\subset E$ and $E_1$ is a properly $\mu_L$- semistable vector bundle of rank $2$.
\item\label{four} $E$ is $\mu_L$-unstable with HN filtration $0\subset E_1\subset E$ and $E_1$ is a line bundle such that $E/E_1$ is a properly $\mu_L$- semistable torsion free sheaf of rank $2$.
\end{enumerate}
In all these cases one has four short exact sequences:

\begin{equation}\label{pizza}0\to N\to E\to E/N\to 0\end{equation}
\begin{equation}\label{arr}0\to M\to E\to N_1\otimes I_{\xi_1}\to 0,\end{equation}
\begin{equation}\label{arr1}0\to N\to M\to N_2\otimes I_{\xi_2}\to 0,\end{equation}
\begin{equation}\label{rango1b}0\to N_2\otimes I_{\xi_2}\to E/N\to N_1\otimes I_{\xi_1}\to 0,\end{equation}
where $N$, $N_1$, $N_2$ are line bundles, $I_{\xi_1}$ and $I_{\xi_2}$ denote the ideal sheaves of two $0$-dimensional subschemes $\xi_1,\xi_2\subset S$, the sheaf $E/N$ has rank-$2$ and no torsion, while $M$ is a vector bundle of rank $2$. 
Moreover, the following inequalities hold:
\begin{equation}\label{lun}
\mu_L(N)\geq\mu_L(N_2)\geq\mu_L(N_1),
\end{equation}  
\begin{equation}\label{mar}
\mu_L(N)\geq\frac{2g-2}{3}\geq\mu_L(N_1);
\end{equation} 
in particular, $\mu_L(N)=\mu_L(N_2)$ (resp. $\mu_L(N_1)=\mu_L(N_2)$) whenever $M$ (resp. $E/N$) is properly $\mu_L$-semistable, that is, in cases (\ref{two}) and (\ref{three}) (resp. in cases (\ref{two}) and (\ref{four})). Analogously, equalities in (\ref{mar}) force $E$ to be properly $\mu_L$-semistable with JH-filtration $0\subset N\subset M\subset E$, that is, one is in case (\ref{two}).

\begin{lem}\label{lem:caso3}
In the above situation, $N_1\otimes\oo_C$ always contributes to the Clifford index of $C\in\vert L\vert_s$. Moreover, one of the following occurs: 
\renewcommand{\theenumi}{\alph{enumi}}
\begin{enumerate}
\item\label{uno} Both $N\otimes\oo_C$ and $N_2\otimes\oo_C$ contribute to the Clifford index of $C\in\vert L\vert_s$.
\item\label{due} The inequality $c_1(N)\cdot (c_1(N_1)+c_1(N_2))\geq \frac{2g+4}{3}$ holds and either $N_2\otimes\oo_C$ contributes to the Clifford index of $C$ or $c_1(N_2)\cdot (c_1(N)+c_1(N_1))\geq g$.
\item\label{tre} The linear series $N\otimes\oo_C$ contributes to the Clifford index of $C\in\vert L\vert_s$ and one has the inequality $c_1(N_2)\cdot c_1(N)>\frac{1}{2}c_1(N)\cdot (c_1(N_1)+c_1(N_2))$.
\item\label{quattro} The inequality $c_1(N)\cdot c_1(N_2)\geq \frac{g+5}{3}$ holds.
\end{enumerate}
In particular, if a general $C\in\vert L\vert_s$ has Clifford dimension $1$ and gonality $k$, then
\begin{equation}\label{sese}
d\geq c_1(N)\cdot c_1(N_1)+c_1(N)\cdot c_1(N_2)+c_1(N_1)\cdot c_1(N_2)\geq\frac{3}{2}k.
\end{equation}
\end{lem}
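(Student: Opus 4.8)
The plan is to analyze the four short exact sequences \eqref{pizza}--\eqref{rango1b} and to exploit Lemma \ref{lem:marghe} and Lemma \ref{lem:evvai} systematically, together with the inequalities \eqref{lun} and \eqref{mar}. First I would observe that $N_1\otimes I_{\xi_1}$ is a quotient of $E$ (via \eqref{arr}), hence of $E/N$ (via \eqref{rango1b}), so by Lemma \ref{lem:marghe} applied to $E\to N_1\otimes I_{\xi_1}$ we get $h^0(S,N_1)\geq 2$; since $N_1$ is globally generated off a finite set, it is base point free and non-trivial, so $\mu_L(N_1)>0$. Because $\omega_C\otimes N_1^\vee|_C$ is a quotient of $E|_C$ (coming from \eqref{equation:seconda} composed with \eqref{arr}), one checks that $h^0(C,\omega_C\otimes N_1^\vee\otimes\oo_C)\geq h^0(S,\cdot)\geq 2$ as well, so $N_1\otimes\oo_C$ genuinely contributes to $\Cliff(C)$. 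This gives the first assertion.

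**The case analysis for $N$ and $N_2$.** For $N$: since $\mu_L(N)\geq (2g-2)/3>0$ we have $h^2(S,N)=0$, so if $h^0(S,N)<2$ then $c_1(N)^2\leq -2$; combined with $\mu_L(N)\geq (2g-2)/3$ this forces $c_1(N)\cdot(c_1(L)-c_1(N))=c_1(N)\cdot(c_1(N_1)+c_1(N_2))\geq (2g+4)/3$, putting us partway into case \eqref{due} or \eqref{tre}. Otherwise $N\otimes\oo_C$ contributes. Similarly for $N_2$: it is the double dual of a subsheaf of the rank-$2$ sheaf $E/N$, but more usefully $N_2\otimes I_{\xi_2}$ is a subsheaf of $M$ and a quotient is not immediate, so instead I would use \eqref{rango1b}, where $N_2\otimes I_{\xi_2}$ is a subsheaf of $E/N$ whose quotient is $N_1\otimes I_{\xi_1}$; dualizing and using that $\det(E/N)|_C=\omega_C\otimes N^\vee\otimes\oo_C$, one finds $\omega_C\otimes N_2^\vee\otimes\oo_C\hookrightarrow$ something related to $M^\vee$... the cleaner route is: $h^2(S,N_2)=0$ because $\mu_L(N_2)\geq\mu_L(N_1)>0$, so either $h^0(S,N_2)\geq 2$, giving that $N_2\otimes\oo_C$ contributes, or $c_1(N_2)^2\leq -2$, which via $\mu_L(N_2)\geq\mu_L(N_1)>0$ yields a lower bound of the form $c_1(N_2)\cdot(c_1(N)+c_1(N_1))\geq g$ (using $\mu_L(N_2)>0$ and $c_1(N_2)+(c_1(N)+c_1(N_1))=c_1(L)$). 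The four alternatives (\ref{uno})--(\ref{quattro}) are then obtained by cross-tabulating the ``$N$ contributes / $c_1(N)^2\leq -2$'' dichotomy with the ``$N_2$ contributes / $c_1(N_2)^2\leq -2$'' one, together with the auxiliary inequality $c_1(N_2)\cdot c_1(N)>\tfrac12 c_1(N)\cdot(c_1(N_1)+c_1(N_2))$, which follows from $\mu_L(N)\geq\mu_L(N_2)$ rewritten as $c_1(N)\cdot c_1(L)/ 1\geq c_1(N_2)\cdot c_1(L)$, i.e. $c_1(N)^2+c_1(N)\cdot c_1(N_2)\ldots$ — the precise bookkeeping of which pair of Chern classes appears is the delicate point, and $(g+5)/3$ in \eqref{quattro} should pop out of the borderline case where both $c_1(N)^2$ and $c_1(N_2)^2$ are $\leq -2$ while also respecting \eqref{mar}.

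**Deducing \eqref{sese}.** Once \eqref{uno}--\eqref{quattro} are in hand, I set $a=c_1(N)\cdot c_1(N_1)$, $b=c_1(N)\cdot c_1(N_2)$, $c=c_1(N_1)\cdot c_1(N_2)$, so $d\geq a+b+c$ (the inequality is equality only when $\xi_1,\xi_2,c_2$-type correction terms vanish; in general $d=a+b+c+l(\xi_1)+l(\xi_2)+\dots\geq a+b+c$). In each case, the fact that each contributing $N_i\otimes\oo_C$ has $\Cliff(N_i\otimes\oo_C)\geq k-2$ translates, exactly as in the proof of Lemma \ref{lem:uova} and Lemma \ref{lem:det}, into a lower bound of the shape $c_1(N_i)\cdot(c_1(L)-c_1(N_i))\geq k$ for the relevant indices. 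Feeding these into $a+b+c$ and checking the four cases separately — in case \eqref{uno} all three of $a+c$, $b+c$, $a+b$ are $\geq k$ so their sum $2(a+b+c)\geq 3k$; in the other cases one combines the explicit numerical bounds $(2g+4)/3$, $g$, $(g+5)/3$ with $k=\lfloor(g+3)/2\rfloor$ and uses $k-2\leq \Cliff$-type estimates — yields $a+b+c\geq \tfrac32 k$ in every case. I expect the main obstacle to be case \eqref{tre} and the borderline overlaps: verifying that the strict inequality $c_1(N_2)\cdot c_1(N)>\tfrac12 c_1(N)\cdot(c_1(N_1)+c_1(N_2))$ together with only $N\otimes\oo_C$ contributing still forces $a+b+c\geq\tfrac32 k$ requires pushing the contribution inequality for $N$ carefully and comparing $g$-linear quantities against $\tfrac32 k\approx \tfrac34 g$. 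That comparison, and making sure the excluded sporadic pairs do not sneak back in, is where the real work lies; the rest is the same Riemann--Roch-plus-Clifford-index bookkeeping used in Lemmas \ref{lem:uova} and \ref{lem:det}.
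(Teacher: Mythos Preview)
Your overall architecture---cross-tabulate on whether $h^0(S,N)\geq 2$ and whether $h^0(S,N_2)\geq 2$, then feed Clifford-type bounds into $a+b+c$---matches the paper's, but several of the individual steps are misidentified and would not go through as you have them.

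\textbf{Contributing to $\Cliff(C)$ needs both sides.} You write that $h^0(S,N_2)\geq 2$ ``gives that $N_2\otimes\oo_C$ contributes''. It does not: you also need $h^0(S,L\otimes N_2^\vee)=h^0(S,N\otimes N_1)\geq 2$. This is exactly why case~(\ref{due}) bifurcates. In the paper, when $h^0(S,N)<2$ and $h^0(S,N_2)\geq 2$, one still has to test $c_1(N\otimes N_1)^2$: if it is $\geq 0$ then Riemann--Roch gives $h^0(S,N\otimes N_1)\geq 2$ and $N_2\otimes\oo_C$ contributes; if it is $<0$, then from $c_1(N\otimes N_1)^2\geq \mu_L(N_1)+c_1(N_2)^2$ one deduces $c_1(N_2)^2<0$ too, and expanding $2g-2=c_1(L)^2=c_1(N\otimes N_1)^2+c_1(N_2)^2+2c_1(N_2)\cdot(c_1(N)+c_1(N_1))$ yields $c_1(N_2)\cdot(c_1(N)+c_1(N_1))\geq g$. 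Your derivation of this last bound from ``$c_1(N_2)^2\leq -2$ and $\mu_L(N_2)>0$'' alone would only give $c_1(N_2)\cdot(c_1(N)+c_1(N_1))\geq 3$, which is far too weak. The same omission affects your argument that $N_1\otimes\oo_C$ contributes: here the paper computes $c_1(N\otimes N_2)^2\geq \mu_L(N)+c_1(N_1)^2>0$ (using $N_1$ nef) to force $h^0(S,N\otimes N_2)>2$, rather than the vague restriction argument you sketch.

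\textbf{The strict inequality in case~(\ref{tre}).} You attribute $c_1(N_2)\cdot c_1(N)>\tfrac12\,c_1(N)\cdot(c_1(N_1)+c_1(N_2))$ to the slope inequality $\mu_L(N)\geq\mu_L(N_2)$. That is the wrong one. The paper uses $\mu_L(N_2)\geq\mu_L(N_1)$, which unwinds to $c_1(N_2)^2+c_1(N_2)\cdot c_1(N)\geq c_1(N_1)^2+c_1(N_1)\cdot c_1(N)$; combining this with $c_1(N_2)^2<0\leq c_1(N_1)^2$ (the latter because $N_1$ is nef) gives $c_1(N_2)\cdot c_1(N)>c_1(N_1)\cdot c_1(N)$, which is equivalent to the claimed strict inequality. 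With this in hand, case~(\ref{tre}) of \eqref{sese} is immediate: $a+c\geq k$ from $N_1$ contributing, and $b>\tfrac12(a+b)\geq k/2$ from $N$ contributing, so $a+b+c\geq \tfrac32 k$. No comparison of ``$g$-linear quantities against $\tfrac34 g$'' is needed, and no sporadic pairs arise here.
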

\begin{proof}
Being a quotient of $E$ off a finite set, $N_1$ is base point free and non-trivial, thus $h^0(S,N_1)\geq 2$ and $\mu_L(N_1)>0$. By the "Strong Bertini' s Theorem" (cf. \cite{donat}), $N_1$ is nef. Proposition \ref{prop:morfismi} implies $h^2(S,N)=h^2(S,N_2)=0$ because of (\ref{lun}).
Analogously, $\mu_L(N_2\otimes N)=\mu_L(N_2)+\mu_L(N)>0$ and $h^2(S,N_2\otimes N)=0$. Moreover, the following holds:
\begin{eqnarray*}
c_1(N_2\otimes N)^2&=&c_1(N_2)^2+c_1(N)^2+2c_1(N_2)\cdot c_1(N)\\
&\geq&c_1(N)^2+c_1(N_2)\cdot c_1(N)+c_1(N_1)\cdot c_1(N)+c_1(N_1)^2\\
&=&\mu_L(N)+c_1(N_1)^2>0,
\end{eqnarray*}
where we have used that, since $\mu_L(N_2)\geq\mu_L(N_1)$, then 
\begin{equation}\label{banana}
c_1(N_2)^2+c_1(N_2)\cdot c_1(N)\geq c_1(N_1)^2+c_1(N_1)\cdot c_1(N),
\end{equation}
and that $c_1(N_1)^2\geq 0$ because $N_1$ is nef. We obtain that
$$
h^0(S, N_2\otimes N)\geq \chi(N_2\otimes N)=2+\frac{1}{2}c_1(N_2\otimes N)^2> 2,
$$
thus $N_1\otimes\oo_C$ always contributes to the Clifford index of $C\in\vert L\vert_s$.

If both $h^0(S,N_2)\geq 2$ and $h^0(S,N)\geq 2$, we are in case (\ref{uno}). 

If $h^0(S,N_2)\geq 2$ and $h^0(S,N)<2$, we show that (\ref{due}) occurs. Since $\chi(N)<2$, one has $c_1(N)^2<0$ and $c_1(N)\cdot (c_1(N_1)+c_1(N_2))\geq\mu_L(E)+2=(2g+4)/3$ by the first inequality in (\ref{mar}). Since $\mu_L(N\otimes N_1)>0$, then $h^2(S,N\otimes N_1)=0$. Moreover, one can show that 
$$
c_1(N\otimes N_1)^2\geq \mu_L(N_1)+c_1(N_2)^2>c_1(N_2)^2.
$$
It follows that, if $c_1(N\otimes N_1)^2<0$, then $c_1(N_2)^2<0$ and
$$
2g-2<2c_1(N)\cdot c_1(N_2)+2c_1(N_1)\cdot c_1(N_2),
$$
that is, $c_1(N_2)\cdot (c_1(N)+c_1(N_1))\geq g$. On the other hand, if $c_1(N\otimes N_1)^2\geq0$, then $h^0(S,N\otimes N_1)\geq 2$ and $N_2\otimes\oo_C$ contributes to the Clifford index.

From now on, assume $h^0(S,N_2)<2$, hence $c_1(N_2)^2<0$. Since $\det E/N\simeq N_1\otimes N_2$, Lemma \ref{lem:evvai} implies $h^0(S,N_1\otimes N_2)\geq 2$. Thus, if $h^0(S,N)\geq 2$, the linear series $N\otimes\oo_C$ contributes to the Clifford index of $C\in \vert L\vert_s$.
Furthermore, inequality (\ref{banana}), together with the fact that $c_1(N_2)^2<0\leq c_1(N_1)^2$, implies that $c_1(N_2)\cdot c_1(N)>c_1(N_1)\cdot c_1(N)$. We obtain $$c_1(N_2)\cdot c_1(N)>\frac{1}{2}c_1(N)\cdot (c_1(N_1)+c_1(N_2)),$$
and we are in case (\ref{tre})

It remains to treat the case where both $h^0(S,N_2)<2$ and $h^0(S,N)<2$. Under these hypotheses, $c_1(N_2)^2<0$ and $c_1(N)^2<0$ and we obtain
\begin{eqnarray*}
2g-2&\leq&c_1(N_1)^2+2c_1(N_1)\cdot c_1(N)+2c_1(N_1)\cdot c_1(N_2)+2c_1(N)\cdot c_1(N_2)-4\\
&=&2c_1(N)\cdot c_1(N_2)+2\mu_L(N_1)-c_1(N_1)^2-4\\
&\leq& 2c_1(N)\cdot c_1(N_2)+\frac{4g-4}{3}-4.
\end{eqnarray*}
As a consequence, $c_1(N)\cdot c_1(N_2)\geq\frac{g+5}{3}$ and we are in case (\ref{quattro}).

Now, we assume that $C$ has Clifford dimension $1$ and gonality $k$ and prove inequality (\ref{sese}). One shows, as in Lemma \ref{lem:uova}, that 
\begin{equation}\label{pilvia}
c_1(N_1)\cdot (c_1(N)+c_1(N_2))\geq k,
\end{equation}
because $N_1\otimes \oo_C$ always contributes to the Clifford index of $C\in\vert L\vert_s$.
Analogously, if $N\otimes \oo_C$ (resp. $N_2\otimes \oo_C$) contributes to $\Cliff(C)$, then $c_1(N)\cdot (c_1(N_1)+c_1(N_2))\geq k$ (resp. $c_1(N_2)\cdot (c_1(N)+c_1(N_1))\geq k$); therefore, the last part of the statement is proved if either (\ref{uno}) or (\ref{due}) occurs (use that $(2g+4)/3\geq k$). 

In case (\ref{tre}), one arrives at the same conclusion by adding inequality (\ref{pilvia}) and \begin{equation}\label{fra}c_1(N)\cdot c_1(N_2)>\frac{1}{2}c_1(N)\cdot (c_1(N_1)+c_1(N_2))\geq \frac{k}{2}.\end{equation} Similarly, in case (\ref{quattro}), one uses that $c_1(N)\cdot c_1(N_2)\geq(g+5)/3\geq k/2$.

\end{proof}

\begin{cor}\label{cor:lasagna}
Assume $C\in\vert L\vert_s$ has Clifford dimension $1$ and maximal gonality $k=\left\lfloor \frac{g+3}{2}\right\rfloor$ and let $E$ be the Lazarsfeld-Mukai bundle associated with a complete, base point free net $A\in W^2_d(C)$. If $E$ is not $\mu_L$-stable, $d<\frac{3}{4}k+\frac{7}{6}+\frac{g}{3}$ and $(g,d)\neq(6,6)$, then $E$ is given by an extension of type (\ref{rango1}), with $N\in\Pic(S)$ and $E/N$ a $\mu_L$-stable, torsion free sheaf of rank $2$ such that $\mu_L(N)\geq (2g-2)/3\geq\mu_L(E/N)$. \end{cor}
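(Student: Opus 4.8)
The plan is to dispose of the possible shapes of the filtration one by one, showing that all of them except the rank-$1$ case of \eqref{rango1} force $d\geq \frac{3}{4}k+\frac{7}{6}+\frac{g}{3}$, contrary to hypothesis. The classification of filtrations of a rank-$3$ bundle is finite: either $E$ is $\mu_L$-unstable with HN filtration of length $2$ or $3$, or $E$ is properly $\mu_L$-semistable with JH filtration of length $2$ or $3$. Grouping these by the ranks of the sheaves involved, one gets precisely four families: (1) the rank-$2$ subsheaf case \eqref{rango2}, i.e.\ $E$ is $\mu_L$-unstable with $\rk E_1=2$ and $E_1$ $\mu_L$-stable, or properly $\mu_L$-semistable with $\rk JH_1(E)=2$; (2) the rank-$1$ case \eqref{rango1}, i.e.\ $E_1$ (resp.\ $JH_1(E)$) a line bundle with $E/E_1$ $\mu_L$-stable of rank $2$; (3)--(6) the ``mixed'' cases (\ref{one})--(\ref{four}) with four exact sequences \eqref{pizza}--\eqref{rango1b}. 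The corollary claims that in the sub-threshold range only case (2) survives.

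First I would handle family (1): by Lemma~\ref{lem:det}, whenever $E$ is given by an extension of type \eqref{rango2} one has $d\geq \frac{3}{4}k+\frac{7}{6}+\frac{g}{3}$, which contradicts the hypothesis $d<\frac{3}{4}k+\frac{7}{6}+\frac{g}{3}$. So this family is excluded outright. Next I would handle the mixed cases (\ref{one})--(\ref{four}): by inequality \eqref{sese} of Lemma~\ref{lem:caso3}, in each of these one has $d\geq \frac{3}{2}k$. So it suffices to check that $\frac{3}{2}k\geq \frac{3}{4}k+\frac{7}{6}+\frac{g}{3}$ for the relevant $(g,d)$, i.e.\ that $\frac{3}{4}k\geq \frac{7}{6}+\frac{g}{3}$. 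Substituting the maximal gonality $k=\lfloor \frac{g+3}{2}\rfloor$, the inequality $\frac{3}{4}\cdot\frac{g+3}{2}\geq \frac{7}{6}+\frac{g}{3}$ rearranges to $\frac{3g+9}{8}\geq \frac{2g+7}{6}$, i.e.\ $9(3g+9)\geq 4(2g+7)\cdot 3$ — a linear inequality in $g$ that holds for all $g$ above a small bound, with the finitely many small-genus exceptions (and the floor correction) to be checked by hand; this is where the excluded value $(g,d)=(6,6)$ enters. Thus, once $(g,d)\neq (6,6)$, the mixed cases are also excluded under the hypothesis $d<\frac{3}{4}k+\frac{7}{6}+\frac{g}{3}$.

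What remains is the rank-$1$ case \eqref{rango1}, and here Lemma~\ref{lem:endo1} supplies exactly the desired structure: $N\in\Pic(S)$, $E/N$ is $\mu_L$-stable torsion free of rank $2$, and the slope inequalities $\mu_L(N)\geq \mu_L(E)=\frac{2g-2}{3}\geq \mu_L(E/N)$ hold by \eqref{rango1}'s defining property (strict on both sides or on neither, according to whether $E$ is $\mu_L$-unstable or properly $\mu_L$-semistable). I would then note that the final assertion $\mu_L(N)\geq (2g-2)/3\geq \mu_L(E/N)$ in the corollary is literally the displayed inequality accompanying \eqref{rango1}. So the corollary follows by elimination: the hypotheses rule out families (1) and (\ref{one})--(\ref{four}), leaving only \eqref{rango1}.

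The main obstacle I anticipate is purely bookkeeping: verifying that the numerical inequality $\frac{3}{2}k\geq \frac{3}{4}k+\frac{7}{6}+\frac{g}{3}$ (equivalently $\frac{3}{4}k\geq \frac{7}{6}+\frac{g}{3}$) does hold for $k=\lfloor\frac{g+3}{2}\rfloor$ once $g$ is not too small, and pinning down exactly which small $(g,d)$ pairs need separate treatment — confirming that $(6,6)$ is the only genuine exception in the range where $\rho(g,2,d)$ and the other standing hypotheses are compatible, while the others are either vacuous or already covered because $d<\frac{3}{4}k+\frac{7}{6}+\frac{g}{3}$ fails. This is the sort of case-check that is routine in spirit but must be done carefully, distinguishing the parity of $g$ because of the floor in $k$.
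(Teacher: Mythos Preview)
Your approach is essentially the paper's: eliminate the rank-$2$ subsheaf case via Lemma~\ref{lem:det} and the mixed cases (\ref{one})--(\ref{four}) via inequality~\eqref{sese} of Lemma~\ref{lem:caso3}, leaving only the extension~\eqref{rango1}. The one refinement worth noting is that the raw inequality $\frac{3}{2}k\geq \frac{3}{4}k+\frac{7}{6}+\frac{g}{3}$ actually fails for small even $g$ (e.g.\ $g=4,8$); the paper instead uses that $d$ is an integer and checks $\left\lceil\frac{3}{4}k+\frac{7}{6}+\frac{g}{3}\right\rceil\leq\left\lceil\frac{3}{2}k\right\rceil$, which isolates $g=6$ as the sole genuine exception (the apparent failure at $g=2$ is vacuous since no complete $g^2_d$ with $d\leq 3$ exists there).
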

\begin{proof}
Apply Lemma \ref{lem:det} and Lemma \ref{lem:caso3} and remark that $\left\lceil\frac{3}{4}k+\frac{7}{6}+\frac{g}{3}\right\rceil\leq\left\lceil\frac{3}{2}k\right\rceil$ unless $g=6$.
\end{proof}

\section{Cases with a $\mu_L$-stable subbundle of rank $2$ and $L$-slope $\geq\mu_L(E)$ }\label{mare}
We assume that a general curve in $\vert L\vert$ has Clifford dimension $1$ and maximal gonality. In this section we show that, if $C\in\vert L\vert_s$ is general, the LM bundle $E$ corresponding to a general, complete, base point free $g^2_d$ on $C$ is neither properly $\mu_L$-semistable with JH filtration $0\subset JH_1(E)\subset E$ and $\mathrm{rk}\,JH_1(E)=2$, nor $\mu_L$-unstable with a $\mu_L$-stable, rank-$2$ vector bundle $E_1$ as maximal destabilizing sheaf .

Fix a positive integer $d$. Choose $l\in\mathbb{N}$ and a non-trivial, globally generated line bundle $N$ such that 
\begin{equation}\label{ale}
\mu_L(N)\leq\frac{2g-2}{3}\leq \frac{(c_1(L)-c_1(N))\cdot c_1(L)}{2},
\end{equation}
and impose that these are either two equalities or two strict inequalities.
Set 
\begin{eqnarray*}
c_1&:=&c_1(L)-c_1(N),\\
c_2&:=&d-c_1.c_1(N)-l,\\
\chi&:=&g-d+5-\chi(N)+l,
\end{eqnarray*}
 and define the vector $v:=2+c_1+(\chi-2)\omega\in H^*(S,\Z)$. The following construction is analogous to that of Section \ref{bistrot}.

Let $\mathcal{E}_{N,l}$ be the moduli stack of filtrations $0\subset M\subset E$, where $[M]\in \M_L(v)^{\mu s}(\mathbb{C})$ and $[E/M]\in \M(v(N\otimes I_\xi))(\mathbb{C})$ with $l(\xi)=l$. This is alternatively described as the moduli stack of extensions 
\begin{equation}\label{imp}
0\to M\to E\to N\otimes I_\xi\to 0,
\end{equation}
with $M$ and $\xi$ as above.

If $p:\E_{N,l}\to \M_L(v)^{\mu s}\times \M(v(N\otimes I_\xi))$ denotes the morphism of Artin stacks mapping the short exact sequence (\ref{imp}) to $(M,N\otimes I_\xi)$, the fiber of $p$ over the point of $M_L(v)^{\mu s}\times \M(v(N\otimes I_\xi))$ corresponding to the pair $(M,N\otimes I_\xi)$ is the quotient stack 
$$
[\Ext^1(N\otimes I_\xi,M)/\Hom(N\otimes I_\xi,M)].
$$
Define $\tilde{P}_{N,l}$ to be the closure of the image of $\E_{N,L}$ under the natural projection $$q:\E_{N,L}\to\M(v(E)),$$ which sends the isomorphism class of extension (\ref{imp}) to $[E]\in\M(v(E))(\mathbb{C})$. The morphism $q$ is representable and the fiber of $q$ over the point of $\tilde{P}_{N,l}$ corresponding to $[E]$ is the Quot-scheme $\mathrm{Quot}_S(E,P)$, where by $P$ we denote the Hilbert polynomial of $N\otimes I_\xi$.
We consider the open substack $P_{N,l}\subset\tilde{P}_{N,l}$, whose $\mathbb{C}$-points are isomorphism classes of vector bundles $E$ such that $h^1(S,E)=h^2(S,E)=0$.

\begin{lem}\label{lem:dim}
The stack $P_{N,l}$, if nonempty, has dimension
$$\dim P_{N,l}=2l+\langle v,v\rangle+\langle v(N\otimes I_\xi),v\rangle.
$$
\end{lem}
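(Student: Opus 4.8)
The idea is to decompose the stack $P_{N,l}$ as iterated fibrations over $\M_L(v)^{\mu s}$ and the Hilbert scheme, then add up dimensions along the fibres of $p$ and $q$, exactly as in the proof of Proposition \ref{prop:no}. First I would recall that $\tilde{P}_{N,l}$ is the closure of the image of $\E_{N,l}$ under the representable morphism $q$, so that
$$
\dim \tilde{P}_{N,l} = \dim \E_{N,l} - (\text{dimension of a general fibre of }q).
$$
The fibre of $q$ over $[E]$ is $\mathrm{Quot}_S(E,P)$, and by (\ref{quot}) its dimension at a point $[\varphi\colon E\to N\otimes I_\xi]$ is controlled by $\dim\Hom(K,N\otimes I_\xi)$ and $\Ext^1(K,N\otimes I_\xi)$ with $K=\ker\varphi\simeq M$. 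Since $M$ is $\mu_L$-stable of slope $\ge\mu_L(N\otimes I_\xi)$, Proposition \ref{prop:morfismi} gives $\Hom(N\otimes I_\xi,M)=0$ whenever $M\not\simeq N\otimes I_\xi$ (which holds here because $M$ has rank $2$, and in any case $\M_L(v)^{\mu s}$ parametrises rank-$2$ sheaves), so by Serre duality $\Ext^2(M,N\otimes I_\xi)=\Hom(N\otimes I_\xi,M)^\vee=0$. Hence the $q$-fibres are generically smooth of dimension $\dim\Hom(M,N\otimes I_\xi)=-\chi(M,N\otimes I_\xi)=\langle v(M),v(N\otimes I_\xi)\rangle=\langle v,v(N\otimes I_\xi)\rangle$, using $\chi(E,F)=-\langle v(E),v(F)\rangle$ and $v(M)=v$.

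Next I would compute $\dim\E_{N,l}$ via the morphism $p\colon\E_{N,l}\to\M_L(v)^{\mu s}\times\M(v(N\otimes I_\xi))$. The target has dimension $\dim\M_L(v)^{\mu s}+\dim\M(v(N\otimes I_\xi))=(\langle v,v\rangle+1)+2l$, where I use that any component of $\M_L(v)^{\mu s}$ has dimension $\langle v,v\rangle+1$ and that $\M(v(N\otimes I_\xi))$ is corepresented by $S^{[l]}$, hence has dimension $2l-1$ as a stack (the $-1$ from the automorphisms of a rank-$1$ torsionfree sheaf, i.e.\ the $\mathbb{G}_m$ of scalars). The fibre of $p$ over $(M,N\otimes I_\xi)$ is the quotient stack $[\Ext^1(N\otimes I_\xi,M)/\Hom(N\otimes I_\xi,M)]$ with trivial action, whose dimension is $\dim\Ext^1(N\otimes I_\xi,M)-\dim\Hom(N\otimes I_\xi,M)$. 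Since $\Hom(N\otimes I_\xi,M)=0$ and also $\Ext^2(N\otimes I_\xi,M)=\Hom(M,N\otimes I_\xi)^\vee$ is dual to the space we already handled, we get $\dim[\text{fibre of }p]=-\chi(N\otimes I_\xi,M)+\dim\Ext^2(N\otimes I_\xi,M)=\langle v(N\otimes I_\xi),v\rangle+\langle v,v(N\otimes I_\xi)\rangle$; but $\langle\,,\,\rangle$ is symmetric, so this equals $2\langle v,v(N\otimes I_\xi)\rangle$. Assembling:
$$
\dim\E_{N,l}=(\langle v,v\rangle+1)+(2l-1)+2\langle v,v(N\otimes I_\xi)\rangle,
$$
and subtracting the $q$-fibre dimension $\langle v,v(N\otimes I_\xi)\rangle$ yields
$$
\dim P_{N,l}=2l+\langle v,v\rangle+\langle v(N\otimes I_\xi),v\rangle,
$$
which is the claimed formula (note $P_{N,l}$ is open in $\tilde P_{N,l}$, so has the same dimension wherever nonempty). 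Finally one must check consistency: $v$ is the Mukai vector of $M$ as defined by $c_1,c_2,\chi$ in the text, and $v(E)=v+v(N\otimes I_\xi)$, which is forced by the exact sequence (\ref{imp}) and the additivity of Mukai vectors; this is where the bookkeeping with $c_1,c_2,\chi$ pays off.

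\emph{Main obstacle.} The arithmetic of Mukai vectors is routine once $\Hom$ and $\Ext^2$ vanish, so the real point is justifying those vanishings and the stacky dimension count on the fibres of $p$. The subtle input is that the action of $\Hom(N\otimes I_\xi,M)$ on $\Ext^1(N\otimes I_\xi,M)$ is trivial (cited from \cite{bridgeland}), so the automorphisms of the extension are not absorbed — one genuinely has a quotient stack of negative relative dimension in some degenerate subloci, and the general-fibre dimension is what controls $\dim\tilde P_{N,l}$. I would be careful to phrase everything in terms of the generic point of each component so that the semicontinuity in (\ref{quot}) and the smoothness assertion ($\Ext^1(K,N\otimes I_\xi)=0$ at the generic point) apply cleanly; away from the generic locus the fibres can only jump down in the relevant sense, so the dimension of $P_{N,l}$ is computed correctly at the generic point.
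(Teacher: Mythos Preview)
Your overall strategy matches the paper's: compute $\dim\E_{N,l}$ via $p$, then pass to $\tilde P_{N,l}$ via $q$. However, you have reversed the direction of the key vanishing. Proposition~\ref{prop:morfismi}(1) forbids maps \emph{from} higher slope \emph{to} lower slope; since $\mu_L(M)\ge\mu_L(N\otimes I_\xi)$ (and the ranks differ, handling the equality case via part (2)), what vanishes is $\Hom(M,N\otimes I_\xi)=0$, \emph{not} $\Hom(N\otimes I_\xi,M)$. The latter has no reason to vanish in general.

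This reversal contaminates both fibre computations. The correct consequences of $\Hom(M,N\otimes I_\xi)=0$ are: (i) by Serre duality $\Ext^2(N\otimes I_\xi,M)=\Hom(M,N\otimes I_\xi)^\vee=0$, so the fibre of $p$ has constant dimension $\dim\Ext^1(N\otimes I_\xi,M)-\dim\Hom(N\otimes I_\xi,M)=-\chi(N\otimes I_\xi,M)=\langle v(N\otimes I_\xi),v\rangle$, \emph{not} twice that; and (ii) the Quot-scheme fibres of $q$ satisfy $\dim\mathrm{Quot}_S(E,P)\le\dim\Hom(M,N\otimes I_\xi)=0$, so they are $0$-dimensional, \emph{not} $\langle v,v(N\otimes I_\xi)\rangle$-dimensional. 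Your claimed identity $\dim\Hom(M,N\otimes I_\xi)=-\chi(M,N\otimes I_\xi)$ is also unjustified: even granting your (false) $\Ext^2$-vanishing, it would require $\Ext^1(M,N\otimes I_\xi)=0$, which you never address, and in any case the sign is wrong.

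Your two errors---overcounting the $p$-fibre by $\langle v,v(N\otimes I_\xi)\rangle$ and then subtracting this same spurious quantity as the $q$-fibre dimension---happen to cancel, so you land on the right formula; but the argument as written is not a proof. The fix is easy once you flip the arrow: with $\Hom(M,N\otimes I_\xi)=0$ you get $\dim\E_{N,l}=(\langle v,v\rangle+1)+(2l-1)+\langle v(N\otimes I_\xi),v\rangle$ and $\dim P_{N,l}=\dim\E_{N,l}$ directly, which is exactly the paper's computation.
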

\begin{proof}
We claim that the dimension of the fibers of $p$ is constant. Indeed, Serre duality and Proposition \ref{prop:morfismi} imply that $\dim \Ext^2(N\otimes I_\xi,M)=\dim\Hom(M,N\otimes I_{\xi})=0$ for any $[M]\in \M_L(v)^{\mu s}(\mathbb{C})$ and $\xi\in S^{[l]}$.
This shows that $\E_{N,l}$, if nonempty, has dimension equal to
$$
\dim (M_L(v)^{\mu s}\times \M(v(N\otimes I_\xi)))-\chi(N\otimes I_\xi,M)=2l-1+1+\langle v,v\rangle+\langle v(N\otimes I_\xi),v\rangle;
$$
note that this coincides with the dimension computed by Yoshioka (cf. Lemma 5.2 in \cite{yoshioka}). The statement follows by remarking that, if $P_{N,l}$ is nonempty, then $\dim P_{N,l}=\dim\tilde{P}_{N,l}=\dim\E_{N,l}$ because the Quot-schemes corresponding to the fibers of $q$ are $0$-dimensional (use inequalities analogous to (\ref{quot})).

\end{proof}
We consider the Grassmann bundle $\G_{N,l}\to P_{N,l}$, whose fiber over $[E]\in P_{N,l}(\mathbb{C})$ is $G(3,H^0(S,E))$, and the rational map $h_{N,l}:\G_{N,l}\dashrightarrow \W^2_d(\vert L\vert )$. The fiber of $h_{N,l}$ over a pair $(C,A)$ is the quotient stack
$$
[\mathbb{P}(\Hom(E_{C,A},\omega_C\otimes A^\vee)^\circ)/\mathrm{Aut}(E_{C,A})],
$$
where $\Hom(E_{C,A},\omega_C\otimes A^\vee)^\circ\subset \Hom(E_{C,A},\omega_C\otimes A^\vee)$ consists, by definition, of morphisms with kernel isomorphic to $\oo_S^{\oplus 3}$. This quotient stack has dimension equal to $-1$, as in Section \ref{bistrot}. Our goal is to estimate the dimension of the closure of the image of $h_{N,l}$, which is denoted by $\W_{N,l}$. We first prove the following:
\begin{lem}\label{lem:occhio}
If $\G_{N,l}$ is nonempty, then
$$
\dim\G_{N,l}= g+\rho(g,2,d)+\chi(M,N\otimes I_\xi).
$$
Moreover, $\chi(M,N\otimes I_\xi)\leq\frac{4}{3}g+\frac{8}{3}-d-\frac{3}{2}c_1(N)\cdot c_1$.
\end{lem}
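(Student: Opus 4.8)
The plan is to compute $\dim \G_{N,l}$ by assembling the contributions from each stage of the construction, and then to bound the Euler characteristic term using the numerical inequalities already established. First I would recall that $\G_{N,l} \to P_{N,l}$ is a Grassmann bundle with fibers $G(3, H^0(S,E))$, so that $\dim \G_{N,l} = \dim P_{N,l} + 3 \cdot (h^0(S,E) - 3)$. Since $P_{N,l}$ parametrizes bundles with $h^1(S,E) = h^2(S,E) = 0$, Riemann--Roch on the $K3$ surface gives $h^0(S,E) = \chi(S,E) = 2 + \frac{c_1(L)^2}{2} - (d-5) = g - d + 5$, exactly as in Proposition \ref{prop:basic}; hence the Grassmann fibers have dimension $3(g-d+2) = 3\rho(g,2,d)/\ldots$ — more precisely $3(g-d+2)$. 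Combining this with Lemma \ref{lem:dim}, which gives $\dim P_{N,l} = 2l + \langle v, v\rangle + \langle v(N\otimes I_\xi), v\rangle$, I would expand both Mukai pairings in terms of $c_1$, $c_1(N)$, $\chi$, $\chi(N)$ and $l$ using $\langle x, y\rangle = -\chi(x,y)$ and the definition of $v$. The bookkeeping should collapse, after substituting $c_1 = c_1(L) - c_1(N)$, $c_2 = d - c_1\cdot c_1(N) - l$ and $\chi = g - d + 5 - \chi(N) + l$, to the clean expression $g + \rho(g,2,d) + \chi(M, N\otimes I_\xi)$; the key identity making this work is that $\langle v(M), v(N\otimes I_\xi)\rangle = -\chi(M, N\otimes I_\xi)$ together with the fact that $\dim \M_L(v)^{\mu s} = \langle v, v\rangle + 1$ and $\dim \M(v(N\otimes I_\xi)) = 2l - 1$ contribute the surplus that cancels the $-1$'s.

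For the second assertion, I would write $\chi(M, N\otimes I_\xi) = -\langle v(M), v(N\otimes I_\xi)\rangle$ and expand it explicitly: it equals $-c_1\cdot c_1(N) + \chi(M) + \chi(N\otimes I_\xi) - 2\cdot 1 \cdot 1$ (the rank term), i.e. a linear expression in $c_1\cdot c_1(N) = c_1(N)\cdot(c_1(L) - c_1(N))$, $c_1(M)^2$, $c_2(M)$, $c_1(N)^2$, and $l$. Using the $\mu_L$-stability of $M$, the inequality $\langle v(M), v(M)\rangle \geq -2$ from Proposition \ref{prop:basic}-type considerations (equivalently $4c_2(M) - c_1(M)^2 - 8 \geq -2$, exactly as in the proof of Lemma \ref{lem:det}) bounds $c_2(M)$ from below, hence bounds $\chi(M)$; and the relation $d = c_1\cdot c_1(N) + c_2(M) + l$ lets me eliminate $l$. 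After substituting $c_1(M) = c_1(L) - c_1(N)$ everywhere and using $c_1(L)^2 = 2g-2$, the inequality should reduce to the claimed bound $\chi(M, N\otimes I_\xi) \leq \frac{4}{3}g + \frac{8}{3} - d - \frac{3}{2}c_1(N)\cdot c_1$, where the coefficient $\frac{3}{2}$ and the shape of the bound come from the $L$-slope constraint $\mu_L(M) \geq \frac{2g-2}{3}$ in \eqref{bank} being exploited together with the bound on $\langle v(M), v(M)\rangle$.

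The main obstacle I anticipate is purely the combinatorial expansion: keeping track of the several quadratic intersection numbers ($c_1(M)^2$, $c_1(N)^2$, $c_1(M)\cdot c_1(N)$) and their interrelations via $c_1(M) + c_1(N) = c_1(L)$, while simultaneously juggling the $\chi$'s and the length $l$, so that everything telescopes to the stated closed forms without sign errors. A secondary subtlety is justifying the vanishing $\dim \Ext^2(N\otimes I_\xi, M) = 0$ needed for the dimension count of $\E_{N,l}$ to be exact rather than merely a bound — but this is handled by Serre duality and Proposition \ref{prop:morfismi}\eqref{utile} together with the slope inequality $\mu_L(M) \geq \mu_L(N\otimes I_\xi)$, exactly as invoked in Lemma \ref{lem:dim}, and by noting that when the slopes are equal $M$ is stable of rank $2$ while $N$ has rank $1$ so no nonzero map $M \to N\otimes I_\xi$ exists either. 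Once these vanishings are in place, the equality (as opposed to inequality) for $\dim \G_{N,l}$ follows, and the second inequality is then a finite numerical deduction from Lemma \ref{lem:det} and \eqref{bank}.
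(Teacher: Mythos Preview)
Your strategy for the first equality is essentially the paper's: combine $\dim P_{N,l}$ from Lemma~\ref{lem:dim} with the Grassmann fibre dimension $3(g-d+2)$ and simplify. The paper streamlines the bookkeeping by expanding
\[
2(\rho(g,2,d)-1)=\langle v(E),v(E)\rangle=\langle v(N\otimes I_\xi),v(N\otimes I_\xi)\rangle+\langle v,v\rangle+2\langle v(N\otimes I_\xi),v\rangle
\]
and using $\langle v(N\otimes I_\xi),v(N\otimes I_\xi)\rangle=2l-2$, which avoids writing out all the Chern numbers. Either route works; note however that your written expansion $-c_1\cdot c_1(N)+\chi(M)+\chi(N\otimes I_\xi)-2$ is off: since $\rk M=2$ and $\rk(N\otimes I_\xi)=1$, the correct value is $\chi(M,N\otimes I_\xi)=2\chi(N\otimes I_\xi)+\chi(M)-4-c_1(N)\cdot c_1$.

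For the inequality there is a genuine misstep. You propose to bound $c_2(M)$ from below via $\langle v(M),v(M)\rangle\geq -2$ and then eliminate $l$ through $d=c_1(N)\cdot c_1+c_2(M)+l$. But after that substitution one finds
\[
\chi(M,N\otimes I_\xi)=c_1(N)\cdot c_1+c_1(N)^2-2d+c_2(M)+4+\tfrac{1}{2}c_1(M)^2,
\]
so $c_2(M)$ enters with a \emph{positive} coefficient: a lower bound on it yields a lower bound on $\chi(M,N\otimes I_\xi)$, which is the wrong direction. The stability inequality for $M$ is in fact not used here at all (that ingredient belongs to Lemma~\ref{lem:det}, where it bounds $d$). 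The paper's argument uses only $l\geq 0$ together with the slope constraint $\mu_L(N)\leq(2g-2)/3$ from \eqref{ale}, rewritten as $\tfrac{1}{2}c_1(N)^2\leq \tfrac{g-1}{3}-\tfrac{1}{2}c_1(N)\cdot c_1$; substituting this into $\chi(M,N\otimes I_\xi)=g-d+3+\tfrac{1}{2}c_1(N)^2-l-c_1(N)\cdot c_1$ gives the stated bound directly.
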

\begin{proof}
We use that
\begin{eqnarray*}
2(\rho(g,2,d)-1)=\langle v(E),v(E)\rangle&=&\langle v(N\otimes I_\xi),v(N\otimes I_\xi)\rangle+\langle v,v\rangle+2\langle v(N\otimes I_\xi),v\rangle\\
&=&2l-2+\langle v,v\rangle+2\langle v(N\otimes I_\xi),v\rangle;
\end{eqnarray*}
this implies that
\begin{eqnarray*}
\dim \G_{N,l}=\dim P_{N,l} +3(h^0(S,E)-3)&=&2\rho(g,2,d)-\langle v(N\otimes I_\xi),v\rangle+3(g-d+2)\\&=&g+\rho(g,2,d)+\chi(M,N\otimes I_\xi),
\end{eqnarray*}
as soon as $\G_{N,l}$ is nonempty. 

Since $\chi(M,N\otimes I_\xi)=-\langle v(N\otimes I_\xi),v\rangle=2\chi(N\otimes I_\xi)+\chi-4-c_1(N)\cdot c_1$, the last part of the statement follows by remembering that $\chi(E)=\chi+\chi(N\otimes I_\xi)=g-d+5$ and that
$$
\frac{c_1(N)^2}{2}\leq\frac{g-1}{3}-\frac{c_1(N)\cdot c_1}{2}
$$
because $\mu_L(E)\geq\mu_L(N\otimes I_\xi)$.
\end{proof}
In conclusion, we prove the following:
\begin{prop}\label{prop:cambridge}
Assume that a general curve in $\vert L\vert_s$ has Clifford dimension $1$ and maximal gonality $k=\left\lfloor \frac{g+3}{2}\right\rfloor$. Let $\W\subset \W_{N,l}$ be an irreducible component of $\W^2_d(\vert L\vert)$; then, $\rho(g,2,d)>0$ and $\W$ does not dominate the linear system $\vert L\vert$. 
\end{prop}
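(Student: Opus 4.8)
The plan is to combine the dimension formula from Lemma~\ref{lem:occhio} with the lower bound on $c_1(N)\cdot c_1$ coming from Lemma~\ref{lem:det} and the fact that a dominating component of $\W^2_d(\vert L\vert)$ must have dimension at least $g+\rho(g,2,d)$. First I would recall that, by Lemma~\ref{lem:occhio}, whenever $\G_{N,l}$ is nonempty one has
$$
\dim\G_{N,l}=g+\rho(g,2,d)+\chi(M,N\otimes I_\xi),\qquad \chi(M,N\otimes I_\xi)\leq \tfrac{4}{3}g+\tfrac{8}{3}-d-\tfrac{3}{2}c_1(N)\cdot c_1.
$$
Since the fibers of $h_{N,l}$ are quotient stacks of dimension $-1$, the closure $\W_{N,l}$ of the image satisfies $\dim\W_{N,l}\leq \dim\G_{N,l}+1$, hence
$$
\dim\W_{N,l}\leq g+\rho(g,2,d)+\tfrac{4}{3}g+\tfrac{11}{3}-d-\tfrac{3}{2}c_1(N)\cdot c_1.
$$

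Next I would feed in the numerical input from Lemma~\ref{lem:det}, which applies precisely to the situation (\ref{rango2}) that defines the stacks $\mathcal E_{N,l}$: there $c_1(N)\cdot c_1(M)=c_1(N)\cdot c_1\geq k$, and moreover the sharper inequality $d\geq \tfrac34 k+\tfrac76+\tfrac g3$ holds. Substituting $c_1(N)\cdot c_1\geq k$ into the bound above gives
$$
\dim\W_{N,l}\leq g+\rho(g,2,d)+\tfrac43 g+\tfrac{11}{3}-d-\tfrac32 k.
$$
Now I would use $k=\left\lfloor\frac{g+3}{2}\right\rfloor\geq \frac{g+2}{2}$, so $\tfrac32 k\geq \tfrac34 g+\tfrac32$, and also invoke $d\geq \tfrac34 k+\tfrac76+\tfrac g3\geq \tfrac{17}{24}g+\tfrac{\text{const}}{}$ — more simply, the two estimates $c_1(N)\cdot c_1\geq k$ and $d\geq \tfrac34 k+\tfrac76+\tfrac g3$ together should force the ``correction term'' $\tfrac43 g+\tfrac{11}{3}-d-\tfrac32 c_1(N)\cdot c_1$ to be strictly negative. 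Indeed, bounding $d$ below by $\tfrac34 k+\tfrac76+\tfrac g3$ and $c_1(N)\cdot c_1$ below by $k$ yields a correction term at most $\tfrac43 g + \tfrac{11}{3} - \tfrac g3 - \tfrac76 - \tfrac{9}{4}k = g + \tfrac{5}{2} - \tfrac94 k$, and since $k\geq\frac{g+2}{2}$ this is at most $g+\tfrac52-\tfrac98(g+2) = -\tfrac18 g + \tfrac14 < 0$ for $g\geq 2$. Hence $\dim\W_{N,l} < g+\rho(g,2,d)$, so no component of $\W^2_d(\vert L\vert)$ contained in $\W_{N,l}$ can dominate $\vert L\vert$; and for such a component to be nonempty at all one needs $\dim\W_{N,l}\geq 0$, which combined with the displayed inequality forces $\rho(g,2,d)>0$ (using that the remaining terms $g+\text{correction}$ are bounded above by something like $g$, smaller than what would be needed if $\rho\leq 0$). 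I would phrase this last point carefully: from $\dim\W\geq 0$ and $\dim\W\leq g+\rho(g,2,d)+(\text{negative})$ one extracts $\rho(g,2,d)> -g-(\text{correction})$, and the correction estimate above is strong enough to conclude $\rho(g,2,d)>0$.

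The main obstacle I anticipate is bookkeeping with the constants and with the boundary/exceptional cases: the inequalities $k=\lfloor(g+3)/2\rfloor\geq (g+2)/2$ and $d\geq \tfrac34 k+\tfrac76+\tfrac g3$ are not far from tight, so one must be careful that the final strict inequality $\dim\W_{N,l}<g+\rho(g,2,d)$ genuinely holds for all relevant $g$ (small genus being the delicate range), and that the degenerate subcases of Lemma~\ref{lem:det} — where $h^0(S,\det M)<2$ and one instead has $c_1(N)\cdot c_1(M)\geq (4g+2)/3>k$, giving an even larger correction — are covered a fortiori. I would also need to note that when $P_{N,l}$ or $\G_{N,l}$ is empty there is nothing to prove, and that the stated bound on the fiber dimension of $h_{N,l}$ was already established in Section~\ref{bistrot} verbatim for rank $3$. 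Apart from these routine checks, the argument is a direct dimension count and should go through cleanly.
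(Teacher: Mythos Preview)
Your approach is essentially the paper's: combine the dimension formula of Lemma~\ref{lem:occhio}, the bound $c_1(N)\cdot c_1\geq k$ and $d\geq \tfrac34 k+\tfrac76+\tfrac g3$ from Lemma~\ref{lem:det}, the inequality $k\geq(g+2)/2$, and the $(-1)$-dimensional fibers of $h_{N,l}$, to conclude $\dim\W<g+\rho(g,2,d)$. The paper does exactly this, only it keeps $d$ as a free variable (bounding just $c_1(N)\cdot c_1$ and $k$) and compares with the lower bound on $d$ at the very end, whereas you substitute the lower bound on $d$ immediately. Either way works.

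There is, however, a genuine flaw in your argument for $\rho(g,2,d)>0$. You try to extract it from $\dim\W\geq 0$ together with $\dim\W\leq g+\rho+c$ where $c\leq -\tfrac g8+\tfrac14$; but this only yields $\rho\geq -g-c\geq -\tfrac{7g}{8}-\tfrac14$, which is nowhere near $\rho>0$. The paper instead deduces $\rho>0$ \emph{directly} from the numerics: Lemma~\ref{lem:det} gives $d\geq \tfrac34 k+\tfrac76+\tfrac g3$, and with $k\geq(g+2)/2$ this forces $d\geq\tfrac{17}{24}g+\tfrac{23}{12}$; since $\rho(g,2,d)=3d-2g-6$, one gets $\rho>0$ immediately (using integrality of $d$ for the borderline $g=2$). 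You already cite this inequality from Lemma~\ref{lem:det}, so the fix is just to use it for what it's worth rather than routing through $\dim\W\geq 0$.

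A minor related point: your correction estimate $-\tfrac g8+\tfrac14$ is not strictly negative at $g=2$, so the claim ``$<0$ for $g\geq 2$'' is off by the boundary case. The paper's version, which keeps $d$ explicit and checks $d>\tfrac{7g+26}{12}$ at the end, handles this cleanly (again via integrality of $d$ when $g=2$).
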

\begin{proof}
Lemma \ref{lem:det} gives $c_1(N)\cdot c_1\geq k\geq (g+2)/2$ and $d\geq \frac{3}{4}k+\frac{7}{6}+\frac{g}{3}\geq\frac{17}{24}g+\frac{23}{12}$; in particular, $\rho(g,2,d)> 0$. By Lemma \ref{lem:occhio}, we have
\begin{eqnarray*}
\dim\G_{N,l}&\leq& g+\rho(g,2,d)+\frac{4}{3}g+\frac{8}{3}-d-\frac{3}{2}k\\
&\leq& g+\rho(g,2,d)+\frac{4}{3}g+\frac{8}{3}-d-\frac{3}{4}g-\frac{3}{2}\\&=&g+\rho(g,2,d)+\frac{7}{12}g+\frac{7}{6}-d.
\end{eqnarray*}
Since any fiber of $h_{N,l}$ is an algebraic stack of dimension $-1$, then
$$
\dim\W\leq g+\rho(g,2,d)+\frac{7}{12}g+\frac{13}{6}-d.
$$
The right hand side is strictly smaller than $g+\rho(g,2,d)$ because $d> \frac{7g+26}{12}$. It follows that $\W$ cannot dominate $\vert L\vert$. 
\end{proof}

\section{Cases with a $\mu_L$-stable quotient sheaf of rank $2$ and $L$-slope $\leq\mu_L(E)$}\label{section:cola}
In this section we count the number of moduli of rank-$3$ LM bundles $E$, which are either properly $\mu_L$-semistable with JH filtration $0\subset JH_1(E)\subset E$ where $JH_1(E)$ is a line bundle, or $\mu_L$-unstable with maximal destabilizing sheaf $E_1$ such that $E/E_1$ is a $\mu_L$-stable, torsion free sheaf of rank $2$. 

Fix an integer $d\geq 4$. Choose $N\in\Pic(S)$ such that 
\begin{equation}\label{zia}
\mu_L(N)\geq\frac{2g-2}{3}\geq\frac{(c_1(L)-c_1(N))\cdot c_1(L)}{2},
\end{equation}
with equality holding either everywhere or nowhere. 

As before, we set $c_1':=c_1(L)-c_1(N)$, $c_2':=d-c_1'\cdot c_1(N)$, $\chi':=g-d+5-\chi(N)$, $v':=2+c_1'+(\chi'-2)\omega\in H^*(S,\Z)$. 

We denote by $\F_{N}$ the algebraic stack of extensions
\begin{equation}\label{sole}
0\to N\to E\to E/N\to 0,
\end{equation}
where $E/N$ defines a point of $\M_L^{\mu s}(v')$. Equivalently, $\F_{N}$ is the moduli stack of filtrations $0\subset N\subset E$ such that $[E/N]\in\M_L^{\mu s}(v')(\mathbb{C})$. Consider the two projections $p:\F_N\to \M_L^{\mu s}(v')\times \M(v(N))$ and $q:\F_N\to \M(v(E))$ and define $\tilde{R}_{N}$ to be the closure of the image of $q$. The open substack $R_{N}\subset\tilde{R}_{N}$ consists, by definition, of points corresponding to bundles $E$ such that $h^1(S,E)=h^2(S,E)=0$. We look at the Grassmann bundle $\G_N\to R_N$ with fiber over $[E]\in R_N(\mathbb{C})$ equal to $G(3,H^0(S,E))$. The closure of the image of $\G_N$ under the rational map $h_N:\G_N\dashrightarrow \W^2_d(\vert L\vert)$  is denoted by $\W_N$. As before, the fibers of $h_N$ are quotient stacks of dimension $-1$.
\begin{lem}\label{lem:fontana}
The stack $\G_N$, if nonempty, has dimension
$$
\dim\G_N= g+\rho(g,2,d)+\chi(E/N,N).
$$
\end{lem}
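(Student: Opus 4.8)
The plan is to mimic closely the dimension computations carried out in Lemma \ref{lem:dim} and Lemma \ref{lem:occhio} of Section \ref{mare}, adapting them to the present filtration $0\subset N\subset E$ in which $N$ is a line bundle and the quotient $E/N$ is a $\mu_L$-stable torsion free sheaf of rank $2$ with Mukai vector $v'$. First I would analyze the morphism $p:\F_N\to \M_L^{\mu s}(v')\times \M(v(N))$. Its fiber over a point $(E/N, N)$ is the quotient stack $[\Ext^1(E/N,N)/\Hom(E/N,N)]$, so I need to know that the fiber dimension is constant. For this I invoke Serre duality, $\Ext^2(E/N,N)\simeq \Hom(N,E/N)^\vee$, and Proposition \ref{prop:morfismi}(\ref{utile}): since $\mu_L(N)\geq \mu_L(E)\geq \mu_L(E/N)$ with the inequalities consistent with \eqref{zia}, and $E/N$ is $\mu_L$-stable while $N$ is $\mu_L$-stable (being a line bundle), any nonzero map $N\to E/N$ would have to be an isomorphism in codimension $\leq 1$, which is impossible for rank reasons; hence $\Hom(N,E/N)=0$ and $\Ext^2(E/N,N)=0$. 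Therefore the fiber of $p$ has dimension exactly $-\chi(E/N,N)=-\langle v(E/N),v(N)\rangle = \langle v',v(N)\rangle$ independently of the point (there is no exceptional locus here, unlike the $M\simeq N$ case in Section \ref{bistrot}, since the ranks differ).

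Next I would assemble the dimension of $\F_N$. Using $\dim \M_L^{\mu s}(v')=\langle v',v'\rangle+1$ and $\dim \M(v(N)) = \langle v(N),v(N)\rangle + 1 = -2+1 = -1$ (a line bundle on a $K3$ has a $1$-dimensional automorphism group and no moduli), I get
$$
\dim\F_N = \langle v',v'\rangle + 1 - 1 + \langle v',v(N)\rangle = \langle v',v'\rangle + \langle v',v(N)\rangle.
$$
Then, as in Lemma \ref{lem:dim}, the projection $q:\F_N\to\M(v(E))$ is representable with fibers given by Quot-schemes $\mathrm{Quot}_S(E,P)$, and these are $0$-dimensional because $\Hom(N, E/N)=0$ (the relevant kernel is $N$, and the estimate analogous to \eqref{quot} forces dimension $0$); hence $\dim R_N=\dim\tilde R_N=\dim\F_N$ on the open locus where $h^1=h^2=0$. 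Adding the fibers $G(3,H^0(S,E))$ of the Grassmann bundle, whose dimension is $3(h^0(S,E)-3)=3(g-d+2)$ by Proposition \ref{prop:basic}, gives
$$
\dim\G_N = \langle v',v'\rangle + \langle v',v(N)\rangle + 3(g-d+2).
$$

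The final step is the bookkeeping identity converting this into the stated form. I would use the additivity of the Mukai pairing along the exact sequence \eqref{sole}, namely
$$
2(\rho(g,2,d)-1)=\langle v(E),v(E)\rangle = \langle v(N),v(N)\rangle + \langle v',v'\rangle + 2\langle v(N),v'\rangle = -2 + \langle v',v'\rangle + 2\langle v',v(N)\rangle,
$$
so that $\langle v',v'\rangle = 2\rho(g,2,d) - 2\langle v',v(N)\rangle$. Substituting,
$$
\dim\G_N = 2\rho(g,2,d) - \langle v',v(N)\rangle + 3(g-d+2) = g + \rho(g,2,d) + \bigl(\rho(g,2,d) - \langle v',v(N)\rangle + 2g - 3d + 6 - g\bigr),
$$
and since $\rho(g,2,d)=g-3(g-d+2)=3d-2g-6$ one checks $\rho(g,2,d)+2g-3d+6-g=0$, leaving $\dim\G_N = g+\rho(g,2,d) - \langle v',v(N)\rangle = g+\rho(g,2,d)+\chi(E/N,N)$, as claimed. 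The main obstacle I anticipate is not any single hard argument but making sure the vanishings $\Hom(N,E/N)=\Ext^2(E/N,N)=0$ hold in full generality under hypothesis \eqref{zia} — in particular handling the boundary case of equality in \eqref{zia}, where $\mu_L(N)=\mu_L(E/N)$ and one must use the rank discrepancy together with stability (rather than a strict slope inequality) to kill the Hom — and verifying that the Quot-scheme fibers are genuinely $0$-dimensional so that no correction term appears; once these are in place the computation is a direct transcription of Lemmas \ref{lem:dim} and \ref{lem:occhio}.
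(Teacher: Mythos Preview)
Your proposal is correct and follows essentially the same approach as the paper: you identify the fibers of $p$ as quotient stacks of constant dimension $-\chi(E/N,N)$ via Serre duality and Proposition~\ref{prop:morfismi}, observe that the Quot-scheme fibers of $q$ are $0$-dimensional, and then run the same Mukai-pairing bookkeeping as in Lemma~\ref{lem:occhio}. The paper's proof is briefer---it simply states $\Ext^2(E/N,N)=0$ without spelling out the rank-discrepancy argument for the boundary case $\mu_L(N)=\mu_L(E/N)$, and defers the final arithmetic to ``proceeding as in the proof of Lemma~\ref{lem:occhio}''---but the logic is identical.
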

\begin{proof}
The fiber of $p$ over a point of $\M_L^{\mu s}(v')\times \M(v(N))$ corresponding to $(E/N,N)$ is the quotient stack $[\Ext^1(E/N,N)/\Hom(E/N,N)]$. Since $\mu_L(N)\geq\mu_L(E/N)$ and $E/N$ is $\mu_L$-stable, Serre duality and Proposition \ref{prop:morfismi} imply that $\Ext^2(E/N,N)=0$; hence, the dimension of the fibers of $p$ is constantly equal to $-\chi(E/N,N)=\langle v(N),v'\rangle$. The morphism $q$ is representable and, as in the previous sections, one shows that its fibers are Quot-schemes of dimension $0$. Therefore, if $R_N$ is nonempty, one has:
$$
\dim R_N=\dim\tilde{R}_N=\dim\F_N=\langle v',v'\rangle+\langle v(N),v'\rangle.
$$
The statement follows by proceeding as in the proof of Lemma \ref{lem:occhio}.
\end{proof}
The next Lemma gives an upper bound for $\chi(E/N,N)$.
\begin{lem}\label{lem:burraco}
Assume that a general curve $C\in\vert L\vert_s$ has Clifford dimension $1$ and maximal gonality $k=\left\lfloor \frac{g+3}{2}\right\rfloor$. If $R_N$ is nonempty, then $\chi(E/N,N)\leq \frac{3}{2}g-2d+3$ for any $E/N$ corresponding to a point of $\M_L^{\mu s}(v')$.
\end{lem}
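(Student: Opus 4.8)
The plan is to bound $\chi(E/N, N)$ from above by exploiting the two pieces of geometric data we have: first, that $N$ is a quotient of the globally generated bundle $E$ off a finite set (more precisely, that $N$, $N_1$, $N_2$ contribute to $\Cliff(C)$ as organized in Lemma~\ref{lem:caso3}), and second, the $\mu_L$-stability of $E/N$, which gives a lower bound on $c_2'$ via $\langle v', v'\rangle \geq -2$. Write $\chi(E/N, N) = -\langle v(N), v'\rangle$. Expanding the Mukai pairing,
\begin{equation*}
\chi(E/N,N) = 2\chi(N) + \chi' - 4 - c_1(N)\cdot c_1',
\end{equation*}
and since $\chi(E) = \chi' + \chi(N) = g - d + 5$, this rearranges to
\begin{equation*}
\chi(E/N,N) = \chi(N) + g - d + 1 - c_1(N)\cdot c_1' = 3 + \frac{c_1(N)^2}{2} + g - d - c_1(N)\cdot c_1'.
\end{equation*}
So the task reduces to showing $\tfrac{c_1(N)^2}{2} - c_1(N)\cdot c_1' \leq \tfrac{g}{2} - d$, i.e. to controlling $c_1(N)^2$ from above and $c_1(N)\cdot c_1'$ from below.

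The second inequality in (\ref{zia}), $\mu_L(E) \geq \mu_L(E/N)$, says exactly $c_1'\cdot c_1(L) \leq \tfrac{4g-4}{3}$, which combined with $c_1(L) = c_1(N) + c_1'$ gives $c_1(N)\cdot c_1' \geq c_1'\cdot c_1(L) - (c_1')^2$; but more usefully it bounds $\tfrac{(c_1')^2}{2} + \tfrac{c_1(N)\cdot c_1'}{2} \leq \tfrac{2g-2}{3}$, hence controls $c_1(N)^2 = c_1(L)^2 - 2c_1(N)\cdot c_1' - (c_1')^2 = (2g-2) - 2c_1(N)\cdot c_1' - (c_1')^2$. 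Substituting, $\tfrac{c_1(N)^2}{2} - c_1(N)\cdot c_1' = (g-1) - 2c_1(N)\cdot c_1' - \tfrac{(c_1')^2}{2}$, so I need $(g-1) - 2c_1(N)\cdot c_1' - \tfrac{(c_1')^2}{2} \leq \tfrac{g}{2} - d$, i.e.
\begin{equation*}
2\,c_1(N)\cdot c_1' + \frac{(c_1')^2}{2} \geq \frac{g}{2} - 1 + d.
\end{equation*}
Here I would bring in $d = c_1'\cdot c_1(N) + c_2'$ and the stability bound $c_2' \geq \tfrac{(c_1')^2}{4} + \tfrac{3}{2}$ (from $\langle v', v'\rangle \geq -2$, exactly as in the proof of Lemma~\ref{lem:det}), so that the required inequality becomes $c_1(N)\cdot c_1' + \tfrac{(c_1')^2}{4} \geq \tfrac{g}{2} - \tfrac{5}{2}$, and then invoke Lemma~\ref{lem:caso3} — specifically inequality (\ref{sese}) together with the individual contributions $c_1(N_1)\cdot(c_1(N)+c_1(N_2)) \geq k$ etc. — to see that $c_1(N)\cdot c_1' = c_1(N)\cdot c_1(N_1) + c_1(N)\cdot c_1(N_2)$ is at least something on the order of $k \geq \tfrac{g+2}{2}$, which comfortably dominates $\tfrac{g}{2} - \tfrac{5}{2}$.

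The main obstacle I anticipate is not any single estimate but the bookkeeping: the sheaf $E/N$ can be of the "rank-$2$ $\mu_L$-stable with small slope" type treated in Lemma~\ref{lem:endo1}, or it can itself be (properly $\mu_L$-semistable) split into line bundles $N_1 \otimes I_{\xi_1}$ and $N_2 \otimes I_{\xi_2}$ as in cases (\ref{two}) and (\ref{four}) of Section~\ref{tempo}, and these cases give slightly different lower bounds on $c_1(N)\cdot c_1'$ — one must check that the weakest of them still suffices. I would therefore split the argument according to whether $E/N$ is $\mu_L$-stable (use Lemma~\ref{lem:endo1}, so $c_1(N)\cdot c_1' \geq k$ directly) or properly $\mu_L$-semistable (use Lemma~\ref{lem:caso3}, distinguishing its sub-cases (\ref{uno})--(\ref{quattro})), in each case feeding the resulting numerical inequality into the displayed reduction above. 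A secondary point of care is that $c_2'$ may need to absorb the lengths $l(\xi_i)$ of the ideal sheaves when $E/N$ is not locally free, but these only help (they increase $d$ relative to $c_1'\cdot c_1(N)$), so the clean bound $c_2' \geq \tfrac{(c_1')^2}{4} + \tfrac{3}{2}$ remains valid in that case as well via $(E/N)^{\vee\vee}$.
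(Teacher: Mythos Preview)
There is a genuine gap: the stability bound $c_2' \geq \tfrac{(c_1')^2}{4} + \tfrac{3}{2}$ points in the wrong direction for your reduction. After you substitute $d = c_1(N)\cdot c_1' + c_2'$ into
\[
2\,c_1(N)\cdot c_1' + \frac{(c_1')^2}{2} \;\geq\; \frac{g}{2} - 1 + d,
\]
the term $c_2'$ sits on the right-hand side, the side you want to be \emph{small}. A lower bound on $c_2'$ only makes the right-hand side larger; replacing $c_2'$ by $\tfrac{(c_1')^2}{4} + \tfrac{3}{2}$ yields the \emph{stronger} sufficient condition $c_1(N)\cdot c_1' + \tfrac{(c_1')^2}{4} \geq \tfrac{g}{2} + \tfrac{1}{2}$, not the weaker one $\geq \tfrac{g}{2} - \tfrac{5}{2}$ that you write. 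Proving the latter would not suffice; proving the former would require separate control on $(c_1')^2$ that you have not supplied.

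What is actually needed is an \emph{upper} bound on $c_2'$, equivalently a lower bound on $\chi' = \chi(E/N)$. The paper obtains this directly: since $[E]\in R_N$ one has $h^1(S,E)=h^2(S,E)=0$, and $\mu_L(N)>0$ gives $h^2(S,N)=0$, so the long exact sequence of (\ref{sole}) forces $h^1(S,E/N)=0$; together with $h^2(S,E/N)=0$ (from $\mu_L(E/N)>0$) and $h^0(S,E/N)\geq 2$ (Lemma~\ref{lem:marghe}) this gives $\chi' = \chi(E/N)\geq 2$, i.e.\ $\chi(N)\leq g-d+3$. Feeding that into your expression $\chi(E/N,N) = \chi(N) + g - d + 1 - c_1(N)\cdot c_1'$ and then using Lemma~\ref{lem:endo1} for $c_1(N)\cdot c_1' \geq k \geq \tfrac{g+2}{2}$ finishes the proof in two lines.

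A secondary point: your proposed case-split is unnecessary and misapplied. By the very definition of $R_N$ the quotient $E/N$ lies in $\M_L^{\mu s}(v')$, so it is always $\mu_L$-stable; the properly $\mu_L$-semistable situations you describe are handled in Section~\ref{spiaggia}, not here. Hence Lemma~\ref{lem:endo1} applies directly and Lemma~\ref{lem:caso3} is irrelevant in this context.
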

\begin{proof}
Consider the extension (\ref{sole}), where $[E]\in R_N(\mathbb{C})$. Since $\mu_L(N)>0$, one has $h^1(S,E/N)=h^2(S,N)=0$. As in Lemma \ref{lem:marghe} one obtains $\chi(E/N)=h^0(S,E/N)\geq 2$, hence $\chi(N)=\chi(E)-\chi(E/N)\leq g-d+3$. As a consequence:
\begin{eqnarray*}
\chi(E/N,N)&=&2\chi(N)+\chi'-4-c_1(N)\cdot c_1'\\
&=&g-d+1+\chi(N)-c_1(N)\cdot c_1'\\
&\leq &2g-2d+4-c_1(N)\cdot c_1'\\
&\leq&\frac{3}{2}g-2d+3,
\end{eqnarray*}
where the last inequality follows from Lemma \ref{lem:endo1}.
\end{proof}
Finally, we prove the following:
\begin{prop}\label{prop:fiducia}
We assume that a general curve in $\vert L\vert$ has Clifford dimension $1$ and maximal gonality $k=\left\lfloor \frac{g+3}{2}\right\rfloor$. If $d>\frac{3}{4}g+2$, no irreducible component $\W$ of $\W^2_d(\vert L\vert)$ which is contained in $\W_N$ dominates the linear system $\vert L\vert$. 
\end{prop}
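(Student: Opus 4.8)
The plan is to carry out the same kind of dimension estimate as in Proposition~\ref{prop:cambridge}, combining Lemma~\ref{lem:fontana} with the upper bound on $\chi(E/N,N)$ given by Lemma~\ref{lem:burraco}, and then checking that the resulting dimension falls short of $g+\rho(g,2,d)$, which is a lower bound for the dimension of any dominating component of $\W^2_d(\vert L\vert)$.

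First I would invoke Lemma~\ref{lem:fontana} to write $\dim\G_N=g+\rho(g,2,d)+\chi(E/N,N)$ whenever $\G_N$ (equivalently $R_N$) is nonempty, and then apply Lemma~\ref{lem:burraco} to get
\begin{equation*}
\dim\G_N\leq g+\rho(g,2,d)+\tfrac{3}{2}g-2d+3.
\end{equation*}
Since every fiber of the rational map $h_N:\G_N\dashrightarrow\W^2_d(\vert L\vert)$ is an algebraic stack of dimension $-1$ (as explained before Lemma~\ref{lem:fontana}, exactly as in Section~\ref{bistrot}), the closure $\W_N$ of the image satisfies $\dim\W_N\leq\dim\G_N+1$, hence any irreducible component $\W$ of $\W^2_d(\vert L\vert)$ contained in $\W_N$ has
\begin{equation*}
\dim\W\leq g+\rho(g,2,d)+\tfrac{3}{2}g-2d+4.
\end{equation*}

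Next I would observe that a dominating component of $\W^2_d(\vert L\vert)$ has dimension at least $g+\rho(g,2,d)$ (standard Brill--Noether theory, since $\dim\vert L\vert_s=g$ and the generic fiber has dimension at least $\rho(g,2,d)$). So it suffices to check that $\tfrac{3}{2}g-2d+4<0$, i.e. $d>\tfrac{3}{4}g+2$, which is precisely the hypothesis of the proposition. Therefore under the assumption $d>\tfrac{3}{4}g+2$ the component $\W$ cannot dominate $\vert L\vert$, which is the assertion. (One should also note, as in Corollary~\ref{cor:lasagna} and the surrounding discussion, that the case being treated here — extensions of type (\ref{rango1}) with $N$ a line bundle and $E/N$ a $\mu_L$-stable torsion free sheaf of rank $2$ with $\mu_L(N)\geq(2g-2)/3\geq\mu_L(E/N)$ — is exactly the one parametrized by $\W_N$, so no other configuration needs to be excluded in this section.)

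The only genuine subtlety is making sure the chain of inequalities on the fiber dimension of $h_N$ is an honest upper bound, i.e. that the generic fiber of $h_N$ really does have dimension $-1$ rather than something smaller in absolute value; but this is identical to the rank-$2$ computation in Section~\ref{bistrot}, where the transitivity of the $\mathrm{Aut}(E_{C,A})$-action on $\mathbb{P}(\Hom(E_{C,A},\omega_C\otimes A^\vee)^\circ)$ forces every fiber to be the classifying stack of the group generated by $\mathrm{Id}$, of dimension $-1$. I expect the main obstacle, such as it is, to be purely bookkeeping: keeping track of which of the two slope regimes (strict inequalities versus equalities in (\ref{zia})) one is in, and verifying that Lemma~\ref{lem:burraco}'s bound $\chi(E/N,N)\leq\tfrac{3}{2}g-2d+3$, which relies on $\chi(N)\leq g-d+3$ and on Lemma~\ref{lem:endo1}, applies uniformly. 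No new estimate is needed — the proposition is a direct numerical consequence of the two preceding lemmas.
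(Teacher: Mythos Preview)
Your proposal is correct and follows essentially the same route as the paper's own proof: combine Lemma~\ref{lem:fontana} and Lemma~\ref{lem:burraco} to bound $\dim\G_N$, add $1$ for the $(-1)$-dimensional fibers of $h_N$, and compare with the lower bound $g+\rho(g,2,d)$ for a dominating component. The paper's argument is just a terser version of yours; your additional remarks on bookkeeping and on the fiber computation are accurate but not strictly needed.
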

\begin{proof}
Let $\W\subset\W_N$ be an irreducible component of $\W^2_d(\vert L\vert)$. Since any fiber of $h_N$ is an Artin stack of dimension equal to $-1$, Lemma \ref{lem:fontana} and Lemma \ref{lem:burraco} imply that
$$
\dim\W\leq g+\rho(g,2,d)+\frac{3}{2}g-2d+4.$$
 
If $\rho(g,2,d)\geq 0$, the condition $d>\frac{3}{4}g+2$ prevents the map $\W\to \vert L\vert$ from being dominant.
\end{proof}
Now we show that, if $d$ is small enough and $C\in\vert L\vert_s$, any complete base point free $g^2_d$ on $C$, whose LM bundle is given by an extension of type (\ref{sole}), is contained in a linear series which is induced from a line bundle on $S$.
\begin{prop}\label{prop:vai}
Let $S$ and $L$ be as in the hypotheses of Proposition \ref{prop:fiducia} and $A$ be a complete, base point free $g^2_d$ on a curve $C\in\vert L\vert_s$. If $d<(5g+13)/6$ and the LM bundle $[E_{C,A}]\in R_N(\mathbb{C})$ for some $N\in\Pic(S)$, the linear system $\vert A\vert$ is contained in the restriction to $C$ of the linear system $\vert L\otimes N^\vee\vert$ on $S$. Moreover, $L\otimes N^\vee$ is adapted to $\vert L\vert$ and $\Cliff( L\otimes N^\vee\otimes\oo_C)\leq \Cliff(A)=d-4$.\end{prop}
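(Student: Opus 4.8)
The plan is to read off the line bundle promised in the statement directly from the destabilizing extension. Set $M:=L\otimes N^\vee$, so that $c_1(M)=c_1(E/N)$ and, since on a $K3$ surface a line bundle is determined by its first Chern class, $M=\det\big((E/N)^{\vee\vee}\big)$. I will use that $E=E_{C,A}$ sits simultaneously in the extension (\ref{sole}) and in the defining sequence (\ref{equation:seconda}) of the Lazarsfeld--Mukai bundle.

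\textbf{Step 1: the containment $|A|\subseteq|M\otimes\oo_C|$.} I would compose the inclusion $N\hookrightarrow E$ of (\ref{sole}) with the surjection $E\twoheadrightarrow\omega_C\otimes A^\vee$ of (\ref{equation:seconda}). This composite $N\to\omega_C\otimes A^\vee$ must be non-zero: otherwise $N$ would map non-trivially into the kernel $H^0(C,A)^\vee\otimes\oo_S\simeq\oo_S^{\oplus3}$, which is impossible because $\mu_L(N)\geq(2g-2)/3>0$ forces $h^0(S,N^\vee)=0$. Since $\omega_C\otimes A^\vee$ is an $\oo_C$-module, the non-zero map factors through $N\otimes\oo_C$, and using the adjunction identity $\omega_C=L\otimes\oo_C$ one gets a non-zero $\oo_C$-linear map $N\otimes\oo_C\to\omega_C\otimes A^\vee$, i.e.\ a non-zero section of $(L\otimes N^\vee\otimes A^\vee)\otimes\oo_C$. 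Hence $h^0(C,A^\vee\otimes M\otimes\oo_C)>0$, which by the reformulation recalled right after Theorem \ref{thm:magari} says exactly that $|A|\subseteq|M\otimes\oo_C|$.

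\textbf{Step 2: $M$ is adapted to $|L|$.} For condition (a), note that $(E/N)^{\vee\vee}$ is a rank-$2$ vector bundle, globally generated off a finite set (as $E$ is, $E/N$ being a quotient of $E$) and $\mu_L$-stable of positive slope, whence $h^2(S,(E/N)^{\vee\vee})=0$, so Lemma \ref{lem:evvai} yields both $h^0(S,M)\geq2$ and that $M$ is base point free. The other half, $h^0(S,L\otimes M^\vee)=h^0(S,N)\geq2$, is immediate when $c_1(N)^2\geq0$ (then $h^0(S,N)\geq\chi(S,N)=2+c_1(N)^2/2$, using $h^2(S,N)=0$); when $c_1(N)^2<0$ one has $c_1(N)\cdot c_1(E/N)\geq(2g+4)/3$ by Lemma \ref{lem:endo1}, and combined with $c_2(E/N)\geq2$ (Step 3) this forces $d$ large, contradicting $d<(5g+13)/6$ outside finitely many small $(g,d)$ which are checked by hand. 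The delicate part is condition (b), the independence of $h^0(C,M\otimes\oo_C)$ of $C\in|L|_s$: from $0\to M\otimes L^\vee\to M\to M\otimes\oo_C\to0$ with $M\otimes L^\vee=N^\vee$ and $h^0(S,N^\vee)=0$, one gets $h^0(C,M\otimes\oo_C)=h^0(S,M)+\dim\ker\big(H^1(S,N^\vee)\to H^1(S,M)\big)$, the map being multiplication by the section cutting out $C$, and this is manifestly constant as soon as $H^1(S,N^\vee)=0$ or $H^1(S,M)=0$. Since $M$ is nef (it is base point free) and $N$ can be arranged to be nef as well --- a negative $(-2)$-part would meet $L$ in degree $\geq(2g-2)/3$ and again push $d$ above $(5g+13)/6$ --- Kawamata--Viehweg vanishing delivers the required vanishing unless both $M$ and $N$ are non-trivial multiples of base point free elliptic pencils, in which case $c_1(M)\cdot c_1(N)=g-1$ and $d\geq g+1$, contradicting $d<(5g+13)/6$ except for a short list of small genera treated directly. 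I expect this verification of (b) (and the bookkeeping of the degenerate elliptic-pencil cases) to be the main obstacle; everything else is formal.

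\textbf{Step 3: the Clifford inequality.} First I would show $c_2(E/N)\geq2$: writing $G=(E/N)^{\vee\vee}$, simplicity of the $\mu_L$-stable sheaf $G$ gives $-2\leq\langle v(G),v(G)\rangle=4c_2(G)-c_1(M)^2-8$, and $c_1(M)^2\geq0$ since $M$ is base point free, so $c_2(G)\geq2$ and hence $c_2(E/N)\geq2$; therefore $c_1(M)\cdot c_1(N)=d-c_2(E/N)\leq d-2$. Next, from $0\to N^\vee\to M\to M\otimes\oo_C\to0$ and $0\to M^\vee\to N\to N\otimes\oo_C\to0$ (with $h^0(S,N^\vee)=h^0(S,M^\vee)=0$, as $M$ and $N$ are non-trivial and effective) one gets $h^0(C,M\otimes\oo_C)\geq h^0(S,M)\geq2+c_1(M)^2/2$ and $h^0(C,N\otimes\oo_C)\geq h^0(S,N)\geq2+c_1(N)^2/2$; in particular $M\otimes\oo_C$ genuinely contributes to the Clifford index. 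Using $c_1(M)^2+c_1(N)^2=c_1(L)^2-2c_1(M)\cdot c_1(N)=2g-2-2c_1(M)\cdot c_1(N)$,
\[
h^0(C,M\otimes\oo_C)+h^0(C,N\otimes\oo_C)\ \geq\ g+3-c_1(M)\cdot c_1(N)\ \geq\ g-d+5.
\]
Finally, Riemann--Roch on $C$ together with $h^1(C,M\otimes\oo_C)=h^0(C,\omega_C\otimes M^\vee\otimes\oo_C)=h^0(C,N\otimes\oo_C)$ gives $\Cliff(M\otimes\oo_C)=g+1-h^0(C,M\otimes\oo_C)-h^0(C,N\otimes\oo_C)\leq d-4=\Cliff(A)$, which concludes the argument.
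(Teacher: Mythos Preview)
Your overall architecture is the paper's, and Steps~1 and~3 are essentially correct and coincide with the paper's arguments. The gaps are both in Step~2.

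First, for $h^0(S,N)\geq 2$ when $c_1(N)^2<0$: the bound $c_2(E/N)\geq 2$ is too weak. It only gives $d\geq c_1(N)\cdot c_1(M)+2\geq(2g+10)/3$, which does \emph{not} contradict $d<(5g+13)/6$ once $g$ is moderately large---already $(g,d)=(10,10)$ survives, and the range of uncovered $(g,d)$ widens with $g$, so this is not a matter of finitely many small cases. You must retain the full stability inequality $c_2(E/N)\geq \tfrac{3}{2}+\tfrac{1}{4}c_1(M)^2$ (which you wrote in Step~3 and then discarded): combined with $c_1(M)^2\geq 2g-2\,c_1(N)\cdot c_1(M)$ (from $c_1(N)^2\leq -2$) and $c_1(N)\cdot c_1(M)\geq(2g+4)/3$, this yields $d\geq(5g+13)/6$ on the nose, which is exactly what the paper does.

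Second, and more seriously, your argument for condition~(b) does not work. The line bundle $N$ is determined by the filtration and cannot be ``arranged'' to be nef; and I do not see how a $(-2)$-component in $|N|$ would force $d$ large. The paper's route is different and cleaner: one proves $h^1(S,M)=0$ by showing directly that $c_1(M)^2>0$. Since $(E/N)^{\vee\vee}$ is globally generated off a finite set with $h^1=h^2=0$ and $\det(E/N)^{\vee\vee}=M$, if $c_1(M)^2=0$ then Proposition~(1.1) of Green--Lazarsfeld forces $(E/N)^{\vee\vee}\simeq\oo_S(\Sigma)\oplus\oo_S(\Sigma)$ for a smooth elliptic curve $\Sigma$, contradicting the $\mu_L$-stability of $E/N$. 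Hence $c_1(M)^2\geq 2$, $M$ is nef and big, so $h^1(S,M)=0$, and the constancy of $h^0(C,M\otimes\oo_C)$ follows (Donagi--Morrison, Lemma~(5.2)). The key idea you are missing is that the $\mu_L$-stability of the rank-$2$ quotient by itself rules out the elliptic-pencil degeneration; no separate nef/Kawamata--Viehweg analysis of $N$ is needed.
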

\begin{proof}
By hypothesis, $E=E_{C,A}$ sits in a short exact sequence like (\ref{sole}), where $E/N$ is $\mu_L$-stable and $\mu_L(N)\geq (2g-2)/3\geq \mu_L(E/N)$. Since $\mu_L(N)>0$, then $h^2(S,N)=0$. 

The $\mu_L$-stability of $E/N$ implies 
$$-2\leq\langle v',v'\rangle=4c_2'-(c_1')^2-8,$$
thus $c_2'\geq 3/2+(c_1')^2/4$. 

If $h^0(S,N)<2$, then $c_1(N)^2\leq -2$, which implies $(c_1')^2+2c_1(N)\cdot c_1'\geq 2g$ and $c_1'\cdot c_1(N)\geq (2g+4)/3$. In particular, $$d=c_1'\cdot c_1(N)+c_2'\geq c_1'\cdot c_1(N)+\frac{3}{2}+\frac{(c_1')^2}{4}\geq \frac{g}{2}+\frac{3}{2}+\frac{g+2}{3}=\frac{5g+13}{6},$$ thus a contradiction. Therefore, one has both $h^0(S,N)\geq 2$ and $h^0(S,\det E/N)\geq 2$.

Remark that $(E/N)^{\vee\vee}$ is globally generated off a finite set and $$h^i(S,(E/N)^{\vee\vee})=h^i(S,E/N)=0\textrm{ for }i=1,2.$$ Since $\det E/N=\det (E/N)^{\vee\vee}$ is base point free and non trivial, if $h^1(S,\det E/N)\neq0$, then $(c_1')^2=0$ and  Proposition (1.1) in \cite{green} implies the existence of a smooth elliptic curve $\Sigma\subset S$ such that $$(E/N)^{\vee\vee}=\oo_S(\Sigma)\oplus\oo_S(\Sigma).$$ Such equality would contradict the stability of $E/N$, thus we conclude that $(c_1')^2\geq 2$ (and $c_2'\geq 2$) and \begin{equation}\label{treno}
h^1(S,\det E/N)=0.
\end{equation}
This ensures that $h^0(C,\det E/N\otimes \oo_C)$ does not depend on the curve $C\in \vert L\vert_s$ (cf. \cite{donagi} Lemma (5.2)). Hence, the line bundle $\det E/N=L\otimes N^\vee$ is adapted to $\vert L\vert$.

We obtain:
\begin{eqnarray*}
\Cliff(\det E/N\otimes\oo_C)&=&c_1(E/N)^2+c_1(N)\cdot c_1(E/N)-2h^0(C,\det E/N\otimes\oo_C)+2\\
&\leq&c_1(E/N)^2+c_1(N)\cdot c_1(E/N)-2h^0(S,\det E/N)+2\\
&=&c_1(N)\cdot c_1(E/N)-2-2h^1(S,\det E/N)\\
&=&d-c_2(E/N)-2\\
&\leq&d-4.
\end{eqnarray*}  

It remains only to prove that $h^0(C,\det E/N\otimes \oo_C\otimes A^\vee)>0$. Consider the following diagram:
$$
\xymatrix{
0\ar[r]&H^0(C,A)^\vee\otimes\oo_S\ar[r]&E\ar[r]^{\alpha}&\omega_C\otimes A^\vee\ar[r]&0.\\
&&N\ar@{^{(}->}[u]_{\gamma}&&
}
$$
Since $h^2(S,N)=0$, the composition $\alpha\circ\gamma\neq 0$. This implies $\Hom(N,\omega_C\otimes A^\vee)\neq 0$ and we have finished because $N^\vee\otimes\omega_C\otimes A^\vee\simeq\det E/N\otimes\oo_C\otimes A^\vee$.

\end{proof}

\section{Remaining cases}\label{spiaggia}
In this section we consider rank-$3$ LM bundles $E$ of type (\ref{one}), (\ref{two}), (\ref{three}), (\ref{four}) on page 12, such that $\det E=L$ and $c_2(E)=d$ is fixed. 

Choose $l_2\in\mathbb{N}$ and two line bundles $N,N_2\in\Pic(S)$ such that $N_1:=L\otimes(N\otimes N_2)^\vee$ is globally generated and non-trivial, and the following holds:
\begin{eqnarray}
\label{mami}\mu_L(N)\geq&\mu_L(N_2)\geq&\mu_L(N_1),\\
\label{pa}\mu_L(N)\geq&\frac{2g-2}{3}\geq&\mu_L(N_1),
\end{eqnarray}
where in (\ref{pa}) either both the inequalities are strict, or none is.

 Set $v:=v(N)$, $v_1:=v(N_1\otimes I_{\xi_1})$ and $v_2:=v(N_2\otimes I_{\xi_2})$, with $l(\xi_2)=l_2$ and $$l(\xi_1)=l_1:=d-l_2-c_1(N)\cdot c_1(N_1)-c_1(N)\cdot c_1(N_2)-c_1(N_1)\cdot c_1(N_2).$$

Define $\F_{N,N_2,l_2}$ to be the moduli stack of extensions 
$$0\to N_2\otimes I_{\xi_2}\to E/N\to N_1\otimes I_{\xi_1}\to 0,$$
where $\xi_i\subset S$ is a $0$-dimensional subscheme of length $l_i$  for $i=1,2$. We consider the projections $p_2:\F_{N,N_2,l_2}\to \M(v_2)\times \M(v_1)$ and $q_2:\F_{N,N_2,l_2}\to \M(v(E/N))$, and we denote by $Q_{N,N_2,l_2}$ the closure of the image of $q_2$. 

If $\E_{N,N_2,l_2}$ is the moduli stack of extensions
$$
0\to N\to E\to E/N\to 0,
$$
where $[E/N]\in Q_{N,N_2,l_2}(\mathbb{C})$, consider the morphisms $p_1:\E_{N,N_2,l_2}\to \M(v) \times Q_{N,N_2,l_2}$ and $q_1:\E_{N,N_2,l_2}\to \M(v(E))$. The closure of the image of $q_1$ is denoted by $\tilde{P}_{N,N_2,l_2}$ and its open substack, consisting of points which correspond to vector bundles $E$ such that $h^1(S,E)=h^2(S,E)=0$, by $P_{N,N_2,l_2}$.

Remark that, if $E$ is a LM bundle of type (\ref{one}), (\ref{two}), (\ref{three}) or (\ref{four}), there exist $N,N_2$ and $l_2$ such that $[E]$ defines a point of $P_{N,N_2,l_2}$. In order to count the number of moduli of such bundles, we start by proving the following:

\begin{lem}\label{lem:leo}
The stack $Q_{N,N_2,l_2}$, if nonempty, has dimension
$$
\dim Q_{N,N_2,l_2}=2l_1+2l_2-2+\langle v_1,v_2\rangle,
$$
unless $N_1\simeq N_2$, $l_2\neq 0$ and $l_1=0$. In this case, for any component $Q\subset Q_{N,N_2,l_2}$, the following inequality holds:
$$
\dim Q\leq 2l_1+2l_2-1+\langle v_1,v_2\rangle.
$$
\end{lem}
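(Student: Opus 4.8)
The plan is to compute $\dim Q_{N,N_2,l_2}$ via the two natural morphisms $p_2\colon\F_{N,N_2,l_2}\to \M(v_2)\times\M(v_1)$ and $q_2\colon\F_{N,N_2,l_2}\to\M(v(E/N))$, exactly as in the analogous computations of Lemma \ref{lem:dim}, Lemma \ref{lem:occhio} and Lemma \ref{lem:fontana}. First I would analyze the fibers of $p_2$. The fiber over a $\mathbb{C}$-point $(N_2\otimes I_{\xi_2},N_1\otimes I_{\xi_1})$ is the quotient stack $[\Ext^1(N_1\otimes I_{\xi_1},N_2\otimes I_{\xi_2})/\Hom(N_1\otimes I_{\xi_1},N_2\otimes I_{\xi_2})]$, where the action is trivial, so its dimension is $\dim\Ext^1-\dim\Hom$. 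Since $\M(v_i)$ is corepresented by a Hilbert scheme $S^{[l_i]}$ (the rank being $1$), with automorphism group of dimension $1$ at each point, we have $\dim(\M(v_2)\times\M(v_1))=2l_1+2l_2-2$. Adding $-\chi(N_1\otimes I_{\xi_1},N_2\otimes I_{\xi_2})+\dim\Hom(N_1\otimes I_{\xi_1},N_2\otimes I_{\xi_2})$ and using $\chi=\dim\Hom-\dim\Ext^1+\dim\Ext^2$, the term $\dim\Hom$ cancels and, provided $\Ext^2$ vanishes, we obtain $\dim\F_{N,N_2,l_2}=2l_1+2l_2-2+\langle v_1,v_2\rangle$. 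The vanishing $\Ext^2(N_1\otimes I_{\xi_1},N_2\otimes I_{\xi_2})\simeq\Hom(N_2\otimes I_{\xi_2},N_1\otimes I_{\xi_1})^\vee=0$ follows from $\mu_L(N_2)\geq\mu_L(N_1)$ together with Proposition \ref{prop:morfismi}(\ref{utile}) when the inequality is strict; when $\mu_L(N_1)=\mu_L(N_2)$ one still gets the vanishing because a nonzero map $N_2\otimes I_{\xi_2}\to N_1\otimes I_{\xi_1}$ between rank-$1$ torsion-free sheaves of equal slope and determinant would force $N_1\simeq N_2$ and $l_1=l_2$, and moreover would have to be injective, hence an isomorphism in codimension $\leq 1$; this is precisely the exceptional locus I would set aside.

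Next I would pass from $\F_{N,N_2,l_2}$ to $Q_{N,N_2,l_2}$ via $q_2$. As in the earlier sections, $q_2$ is representable and its fiber over $[E/N]$ is a Quot-scheme $\mathrm{Quot}_S(E/N,P_1)$ with $P_1$ the Hilbert polynomial of $N_1\otimes I_{\xi_1}$; using the estimates analogous to \eqref{quot} with kernel $K\simeq N_2\otimes I_{\xi_2}$, and the vanishing $\Ext^1(N_2\otimes I_{\xi_2},N_1\otimes I_{\xi_1})$ established above, this Quot-scheme is $0$-dimensional, so $\dim Q_{N,N_2,l_2}=\dim\F_{N,N_2,l_2}$, giving the main formula.

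The remaining work is the exceptional case $N_1\simeq N_2$, $l_2\neq 0$, $l_1=0$. Here $\Hom(N_1\otimes I_{\xi_1},N_2\otimes I_{\xi_2})=\Hom(N_1,N_1\otimes I_{\xi_2})=H^0(S,I_{\xi_2})=0$ since $l_2>0$, so $p_2$ still has constant fiber dimension and the computation of $\dim\F_{N,N_2,l_2}$ is unaffected. The issue is $q_2$: now $K\simeq N_1\otimes I_{\xi_2}=N_2\otimes I_{\xi_2}$ and $\Ext^1(N_2\otimes I_{\xi_2},N_1)$ need not vanish, so the Quot-scheme fiber of $q_2$ can have positive dimension, and $q_2$ need not be injective on $\mathbb{C}$-points. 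The bound I would prove is that the fibers of $q_2$ have dimension at most $1$: indeed $\dim\Hom(N_1\otimes I_{\xi_2},N_1)=\dim\Hom(N_1,N_1)=h^0(\oo_S)=1$ (the inclusion $N_1\otimes I_{\xi_2}\hookrightarrow N_1$ being an isomorphism in codimension $\leq 1$, any map it dominates is a scalar), and by the right-hand inequality in \eqref{quot} $\dim_{[\varphi]}\mathrm{Quot}\leq \dim\Hom(K,N_1\otimes I_{\xi_1})=1$. Hence $\dim Q\leq \dim\F_{N,N_2,l_2}+1 = 2l_1+2l_2-1+\langle v_1,v_2\rangle$ for every component $Q$, which is the claimed inequality.

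The main obstacle I expect is precisely the bookkeeping in this exceptional case: making sure that the $+1$ from the Quot-scheme fiber dimension is the only source of extra dimension — in particular that passing to the closure of the image of $q_2$ does not lower the dimension below the generic fiber estimate — and handling cleanly the borderline in $p_2$ where $\mu_L(N_1)=\mu_L(N_2)$ but $N_1\not\simeq N_2$, where one must still argue $\Ext^2$-vanishing from stability/torsion-freeness rather than from a strict slope inequality. Everything else is a direct transcription of the dimension count already carried out in Sections \ref{bistrot}, \ref{mare} and \ref{section:cola}.
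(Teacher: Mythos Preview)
Your overall strategy matches the paper's, and the generic case is essentially right (modulo a slip: the vanishing you need for the $q_2$-fibers is $\Hom(N_2\otimes I_{\xi_2},N_1\otimes I_{\xi_1})=0$, not $\Ext^1$). But the exceptional case is mishandled in a way that gives the right bound for the wrong reason. You claim that ``the computation of $\dim\F_{N,N_2,l_2}$ is unaffected'' when $N_1\simeq N_2$, $l_2>0$, $l_1=0$, checking only $\Hom(N_1,N_1\otimes I_{\xi_2})=0$. But the fiber dimension of $p_2$ equals $\langle v_1,v_2\rangle+\dim\Ext^2(N_1\otimes I_{\xi_1},N_2\otimes I_{\xi_2})$, and here $\Ext^2\simeq\Hom(N_1\otimes I_{\xi_2},N_1)^\vee$ has dimension $1$ (since $\xi_1=\emptyset\subseteq\xi_2$). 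So in fact $\dim\F_{N,N_2,l_2}=2l_1+2l_2-1+\langle v_1,v_2\rangle$, already $+1$ larger than you claim. You then deduce $\dim Q\leq\dim\F+1$ from ``$q_2$-fibers have dimension $\leq 1$'', which is the wrong direction: larger fibers make the image smaller, and one always has $\dim Q\leq\dim\F$. The two errors cancel numerically because the \emph{same} group $\Hom(N_2\otimes I_{\xi_2},N_1\otimes I_{\xi_1})$ controls both the $\Ext^2$-term (via Serre duality) and the upper bound on the Quot-scheme in \eqref{quot}; the correct chain is that $\dim\F$ jumps by $+1$ and then $\dim Q\leq\dim\F$.

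There is a related gap in the non-exceptional analysis: when $N_1\simeq N_2$ with both $l_i>0$, the group $\Hom(N_2\otimes I_{\xi_2},N_1\otimes I_{\xi_1})$ is not constantly $0$, and its nonvanishing does not require $l_1=l_2$ as you assert --- one has $\mathcal{H}om(I_{\xi_2},I_{\xi_1})\simeq I_{\xi_1\setminus(\xi_1\cap\xi_2)}$, so the $\Hom$ is $1$-dimensional exactly on the closed locus $\{\xi_1\subseteq\xi_2\}$. The paper handles this by stratifying the base of $p_2$ into $\N^0=\{\xi_1\not\subseteq\xi_2\}$ and $\N^1=\{\xi_1\subseteq\xi_2\}$: when $l_1>0$, $\N^1$ is a proper closed substack whose codimension absorbs the extra $+1$ in the $p_2$-fiber, so the equality still holds; when $l_1=0$, $\N^1$ is the whole base and one only obtains the weaker inequality.
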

\begin{proof}
The fiber of $p_2$ over the point of $\M_L(v_2)\times\M_L(v_1)$ given by $(N_2\otimes I_{\xi_2},N_1\otimes I_{\xi_1})$ is the quotient stack 
$$
[\Ext^1(N_1\otimes I_{\xi_1},N_2\otimes I_{\xi_2})/\Hom(N_1\otimes I_{\xi_1},N_2\otimes I_{\xi_2})].
$$
Since $\mu_L(N_2)\geq\mu_L(N_1)$, if either $N_1\not\simeq N_2$ or $N_1\simeq N_2$, $l_1\neq0$ and $l_2= 0$, one finds that $$\Hom(N_2\otimes I_{\xi_2}, N_1\otimes I_{\xi_1})=0.$$ In these cases, the fibers of $p_2$ have constant dimension equal to $-\chi(N_1\otimes I_{\xi_1}, N_2\otimes I_{\xi_2})$ while the fibers of $q_2$ are $0$-dimensional Quot-schemes, hence the statement follows.

If $N_1\simeq N_2$ and $l_1=l_2= 0$, the conclusion is the same because the fibers of $p_2$ have constant dimension equal to $-\chi(N_1\otimes I_{\xi_1}, N_2\otimes I_{\xi_2})+1$ and the fibers of $q_2$ are smooth Quot-schemes of dimension $1$. Indeed, $\Hom (N,N)=1$ and $\Ext^1(N,N)=0$.

On the other hand, if $N_1\simeq N_2$ and $l_2\neq 0$, the fibers of $p_2$ do not necessarily have constant dimension; indeed, $\dim\Hom(N_1\otimes I_{\xi_2}, N_1\otimes I_{\xi_1})$ depends on the reciprocal position of $\xi_1$ and $\xi_2$. Since $\mathcal{H}om(I_{\xi_2},\oo_S)\simeq\mathcal{H}om(I_{\xi_2},I_{\xi_2})\simeq \oo_S$ (cf. \cite{okonek}), one shows that
$$
\mathcal{H}om(I_{\xi_2},I_{\xi_1})\simeq\{f\in\oo_S\,\vert\, f\cdot I_{\xi_2}\subseteq I_{\xi_1}\}=:(I_{\xi_1}:I_{\xi_2})=I_{\xi_1\setminus (\xi_1\cap \xi_2)};
$$
hence, one finds that
\begin{equation}
\dim\Hom(N_1\otimes I_{\xi_2}, N_1\otimes I_{\xi_1})=H^0(S,\mathcal{H}om(I_{\xi_2}, I_{\xi_1}))=\left\{\begin{array}{ll}1&\textrm{ if }\xi_1\subseteq \xi_2\\0&\textrm{ otherwise }\end{array}\right..
\end{equation}

As in \cite{yoshioka}, let $\N^0_{N,N_2,l_2}$ (resp. $\N^1_{N,N_2,l_2}$) be the substack of $\M(v_2)\times\M(v_1)$ whose points correspond to pairs $(N_1\otimes I_{\xi_2},N_1\otimes I_{\xi_1})$ such that $\xi_1\not\subseteq\xi_2$ (resp. $\xi_1\subseteq\xi_2$), that is,  $\dim\Hom(N_1\otimes I_{\xi_2}, N_1\otimes I_{\xi_1})=0$ (resp. $\dim\Hom(N_1\otimes I_{\xi_2}, N_1\otimes I_{\xi_1})=1$). Remark that $\N^0_{N,N_2,l_2}$ and $\N^1_{N,N_2,l_2}$ are complementary and that, being open, $\N^0_{N,N_2,l_2}$ is dense in $\M(v_2)\times\M(v_1)$ provided $l_1\neq0$. 

We define $\F_{N,N_2,l_2}^{0}:=(p_2)^{-1}(\N^0_{N,N_2,l_2})$ and $\F_{N,N_2,l_2}^{1}:=(p_2)^{-1}(\N^1_{N,N_2,l_2})$ and we denote by $Q_{N,N,l_2}^{0}$ and $Q_{N,N,l_2}^{1}$ the closures of the images under $q_2$ of $\F_{N,N_2,l_2}^{0}$ and $\F_{N,N_2,l_2}^{1}$ respectively. Since the fibers of $q_2$ are Quot-schemes, we obtain that:
\begin{eqnarray*}
\dim Q_{N,N_2,l_2}^{0}&=&\dim \F_{N,N_2,l_2}^{0}=\dim\N^0_{N,N_2,l_2}+\langle v_1,v_2\rangle\leq2l_1+2l_2-2+\langle v_1,v_2\rangle,\\
\dim Q_{N,N_2,l_2}^{1}&\leq&\dim \F_{N,N_2,l_2}^{1}=\dim\N^1_{N,N_2,l_2}+\langle v_1,v_2\rangle+1\leq2l_1+2l_2-1+\langle v_1,v_2\rangle,
\end{eqnarray*}
where the last inequality in the second row is strict, unless the stack $\N^1_{N,N_2,l_2}$ is dense in $\M(v_2)\times\M(v_1)$, that is, $l_1=0$.

The statement follows because every component of $Q_{N,N,l_2}$ is contained either in $Q_{N,N,l_2}^{0}$ or in $Q_{N,N,l_2}^{1}$.
\end{proof}
By proceeding as in Lemma \ref{lem:leo}, one proves the following:
\begin{prop}\label{prop:sedici}
Let $Z$ be a nonempty irreducible component of $P_{N,N_2,l_2}$. We have that
\begin{eqnarray}
\label{maggio}\dim Z&=& 2l_1+2l_2+\langle v_2,v\rangle+\langle v_1,v\rangle+\langle v_1,v_2\rangle-\alpha,
\end{eqnarray}
where $\alpha$ satisfies:
\renewcommand{\theenumi}{\alph{enumi}}
\begin{enumerate}
\item\label{unob} If $N$, $N_1$, $N_2$ are all non-isomorphic, then $\alpha=3$.
\item\label{dueb} Assume $N\simeq N_1\simeq N_2$. If $l_2\neq 0$ and $l_1=0$, then $\alpha\in\{1,2,3\}$. If $l_1\neq 0$ and $l_2=0$, one has $\alpha\in\{2,3\}$. In all the other cases, $\alpha=3$. If $N\simeq N_1\not\simeq N_2$, one has $\alpha=3$ unless $l_1=0$, in which case $\alpha\in\{2,3\}$.
\item\label{treb} If $N\simeq N_2\not\simeq N_1$, then $\alpha=3$ unless $l_2=0$, in which case $\alpha\in\{2,3\}$.
\item\label{quattrob} Assume $N_1\simeq N_2\not\simeq N$. Then $\alpha=3$ except when $l_2\neq0$ and $l_1= 0$; in this case $\alpha\in\{2,3\}$.
\end{enumerate}
\end{prop}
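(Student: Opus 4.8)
The plan is to compute $\dim P_{N,N_2,l_2}$ exactly as in Lemma~\ref{lem:leo}, by estimating the fibres of the two projections $p_1\colon\E_{N,N_2,l_2}\to\M(v)\times Q_{N,N_2,l_2}$ and $q_1\colon\E_{N,N_2,l_2}\to\M(v(E))$, the new feature with respect to Sections~\ref{section:cola} being that here $\Hom(N,E/N)$ need no longer vanish, since $E/N$ is not $\mu_L$-stable.

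First I would compute $\dim\E_{N,N_2,l_2}$. The fibre of $p_1$ over a $\mathbb{C}$-point $(N,E/N)$ is the quotient stack $[\Ext^1(E/N,N)/\Hom(E/N,N)]$, of dimension $\dim\Ext^1(E/N,N)-\dim\Hom(E/N,N)=\dim\Ext^2(E/N,N)-\chi(E/N,N)$; by Serre duality $\Ext^2(E/N,N)\simeq\Hom(N,E/N)^\vee$, and since $v(E/N)=v_1+v_2$ one has $-\chi(E/N,N)=\langle v_1,v\rangle+\langle v_2,v\rangle$. Together with $\dim\M(v)=\langle v,v\rangle+1=-1$ and the value of $\dim Q_{N,N_2,l_2}$ from Lemma~\ref{lem:leo}, this gives $\dim\E_{N,N_2,l_2}$ up to the summand $\dim\Hom(N,E/N)$. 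Next, $q_1$ is representable and its fibre over $[E]$ is a Quot-scheme of subsheaves of $E$ isomorphic to $N$; by estimates analogous to~(\ref{quot}) this fibre has dimension between $0$ and $\dim\Hom(N,E/N)$, so that passing from $\dim\E_{N,N_2,l_2}$ to $\dim\tilde P_{N,N_2,l_2}=\dim P_{N,N_2,l_2}$ reabsorbs the $\dim\Hom(N,E/N)$ produced by the $p_1$-fibres; the residual ambiguity is exactly what the integer $\alpha$ records.

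The key local input is the control of $\Hom(N,E/N)$. Applying $\Hom(N,-)$ to~(\ref{rango1b}) and using Proposition~\ref{prop:morfismi} together with the slope inequalities~(\ref{mami}) and the ampleness of $L$, a nonzero map $N\to N_2\otimes I_{\xi_2}$ forces $N\simeq N_2$ and $\xi_2=\emptyset$, and likewise a nonzero map $N\to N_1\otimes I_{\xi_1}$ forces $N\simeq N_1$ and $\xi_1=\emptyset$; hence $\dim\Hom(N,E/N)$ is at most the number of indices $i$ with $N\simeq N_i$ and $l_i=0$, and in particular it vanishes in case~(\ref{unob}), where the $q_1$-fibres are $0$-dimensional and $\dim P_{N,N_2,l_2}=\dim\E_{N,N_2,l_2}=-1+\dim Q_{N,N_2,l_2}+\langle v_1,v\rangle+\langle v_2,v\rangle$ with $\dim Q_{N,N_2,l_2}=2l_1+2l_2-2+\langle v_1,v_2\rangle$, i.e. $\alpha=3$. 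In the remaining cases I would stratify $Q_{N,N_2,l_2}$ according to the value of $\dim\Hom(N,E/N)$ and, on each stratum, combine three phenomena: the possible $+1$ jump of $\dim Q_{N,N_2,l_2}$ (which by Lemma~\ref{lem:leo} occurs only when $N_1\simeq N_2$, $l_2\neq0$, $l_1=0$), the extra summand $\dim\Hom(N,E/N)\in\{1,2\}$ in the $p_1$-fibres, and the fact that the Quot-scheme fibre of $q_1$ may drop strictly below $\dim\Hom(N,E/N)$. This produces $\alpha=3$ when $N$, $N_1$, $N_2$ all behave generically, and the ranges listed in~(\ref{dueb})--(\ref{quattrob}) in the degenerate configurations.

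The \emph{main obstacle} is precisely the bookkeeping of these stacked corrections in case~(\ref{dueb}), where $N$, $N_1$ and $N_2$ may all be isomorphic: one must verify that the corrections do not over-accumulate, so that $\alpha$ stays $\geq1$. The danger is a double count, since the $+1$ jump of $\dim Q_{N,N_2,l_2}$ and the drop of the Quot-scheme fibre of $q_1$ are both governed by the loci $\xi_1\subseteq\xi_2$ and are not independent; the clean way around this is to bound the $q_1$-fibre simultaneously by $\dim\Hom(N,E/N)$ and by the defect of $\dim Q_{N,N_2,l_2}$ from its generic value, rather than adding the two estimates. Once this is organized, the four cases follow from a short separate discussion of which of the isomorphisms $N\simeq N_1$, $N\simeq N_2$, $N_1\simeq N_2$ hold and of the vanishing of the corresponding $l_i$.
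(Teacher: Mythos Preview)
Your approach is correct and is exactly what the paper intends: its proof of this proposition consists solely of the phrase ``by proceeding as in Lemma~\ref{lem:leo}'', and you have spelled out that procedure accurately, including the key control of $\dim\Hom(N,E/N)$ via the sequence~(\ref{rango1b}) and the slope inequalities. One small point: the worry in your final paragraph is unnecessary, because the two possible $+1$ corrections are bounded \emph{independently}---the jump of $\dim Q_{N,N_2,l_2}$ in Lemma~\ref{lem:leo} is governed by the condition $N_1\simeq N_2$, $l_2\neq0$, $l_1=0$, whereas the jump in the $p_1$-fibre is governed by $\dim\Hom(N,E/N)$, bounded by the number of indices $i$ with $N\simeq N_i$ and $l_i=0$; in the extreme sub-case $N\simeq N_1\simeq N_2$, $l_2\neq0$, $l_1=0$ each of these is at most $1$, so the crude estimate already gives $\alpha\geq 3-1-1=1$ with no interaction to untangle.
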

Note that LM bundles of type (\ref{one}) lie in some $P_{N,N_2,l_2}$ with $N,N_1$, $N_2$ as in case (\ref{unob}). Analogously, if $E$ is a LM bundle of type (\ref{three}) (resp. of type (\ref{four})), there exist $N,N_2,N_1=L\otimes(N\otimes N_2)^\vee$ as in (\ref{unob}) or (\ref{treb}) (resp. as in (\ref{unob}) or (\ref{quattrob})) and $l_2\in\mathbb{N}$ such that $[E]\in P_{N,N_2,l_2}(\mathbb{C})$. On the other hand, if a bundle of type (\ref{two}) defines a point of $P_{N,N_2,l}$, then $\mu_L(N)=\mu_L(N_2)=\mu_L(N_1)$ and any case of the previous proposition may occur. 

Now, we consider the Grassmann bundle $\psi:\G_{N,N_2,l_2}\to P_{N,N_2,l_2}$ with fiber over a point of $P_{N,N_2,l_2}$ corresponding to a bundle $E$ equal to $G(3,H^0(S,E))$ and denote by $\W_{N,N_2,l_2}$ the closure of the image of the rational map $h_{N,N_2,l_2}:\G_{N,N_2,l_2}\dashrightarrow\W^2_d(\vert L\vert)$. 
\begin{lem}\label{lem:marta}
Assume that general curves in $\vert L\vert$ have Clifford dimension $1$ and maximal gonality $k=\left\lfloor \frac{g+3}{2}\right\rfloor$. Then, for any irreducible component $\W$ of $\W_{N,N_2,l_2}$, one has
\begin{eqnarray*}
\dim\W&\leq& \frac{1}{4}g+d+\frac{3}{2}-\alpha,
\end{eqnarray*}
where $\alpha$ is as in Proposition \ref{prop:sedici}.
\end{lem}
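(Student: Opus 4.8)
The plan is to mirror the parameter counts carried out in Sections \ref{mare} and \ref{section:cola}, but now for the longer filtrations, and to feed in the numerical inequality \eqref{sese} from Lemma \ref{lem:caso3}. First I would compute the dimension of the Grassmann bundle $\G_{N,N_2,l_2}$ using Proposition \ref{prop:sedici}: since the fiber over $[E]\in P_{N,N_2,l_2}(\mathbb{C})$ is $G(3,H^0(S,E))$ and $h^0(S,E)=g-d+5$ for a bundle with Mukai vector $v(E)=3+c_1(L)+(2+g-d)\omega$ satisfying $h^1=h^2=0$, we get $\dim\G_{N,N_2,l_2}=\dim P_{N,N_2,l_2}+3(g-d+2)$. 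Substituting \eqref{maggio} and expanding the Mukai pairings $\langle v_i,v\rangle$, $\langle v_1,v_2\rangle$ in terms of the intersection numbers $c_1(N)\cdot c_1(N_i)$, the lengths $l_1,l_2$, and the self-intersections $c_1(N)^2$, $c_1(N_i)^2$, one should obtain an expression for $\dim\G_{N,N_2,l_2}$ in which all the $l_i$-dependence cancels against the $2l_1+2l_2$ appearing in \eqref{maggio} together with the $c_2$ contributions hidden in $d=l_1+l_2+c_1(N)\cdot c_1(N_1)+c_1(N)\cdot c_1(N_2)+c_1(N_1)\cdot c_1(N_2)$. This is exactly the bookkeeping already done in the proofs of Lemma \ref{lem:occhio} and Lemma \ref{lem:fontana}, only with three line bundles instead of a line bundle and a rank-$2$ sheaf.

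Next I would simplify the resulting formula using the relation $c_1(N)+c_1(N_1)+c_1(N_2)=c_1(L)$ and the identity $2(\rho(g,2,d)-1)=\langle v(E),v(E)\rangle$, which decomposes as the sum of the three self-pairings plus twice the three cross-pairings; this lets me trade the combination $\langle v_2,v\rangle+\langle v_1,v\rangle+\langle v_1,v_2\rangle$ for $\rho(g,2,d)$ minus half the sum of squares of the individual Mukai vectors. After this substitution, $\dim\G_{N,N_2,l_2}$ should read $g+\rho(g,2,d)+(\text{something})-\alpha$, where the "something'' is a linear expression in $g$, $d$ and the pairwise products $c_1(N)\cdot c_1(N_i)$, $c_1(N_1)\cdot c_1(N_2)$. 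At this point I would invoke the lower bound \eqref{sese}, namely $c_1(N)\cdot c_1(N_1)+c_1(N)\cdot c_1(N_2)+c_1(N_1)\cdot c_1(N_2)\geq \tfrac{3}{2}k\geq\tfrac{3}{4}(g+2)$, together with $d=l_1+l_2+(\text{that sum of products})\geq(\text{that sum})$ to eliminate the products in favor of $g$ and $d$. The bound on $\dim\W$ then follows because, as established at the end of Section \ref{bistrot} and reused throughout, the fibers of $h_{N,N_2,l_2}$ are quotient stacks of dimension $-1$, so $\dim\W\leq\dim\G_{N,N_2,l_2}+1$.

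The main obstacle I anticipate is the algebra of correctly expanding the three Mukai pairings and verifying that the coefficient of $d$ comes out to exactly $+1$ and the coefficient of $g$ to exactly $+\tfrac14$ after applying \eqref{sese}; a sign error or a miscounted factor of $2$ in any of $\langle v_i,v\rangle=2\chi(N)+\chi(N_i\otimes I_{\xi_i})-4-c_1(N)\cdot c_1(N_i)$ (and analogously for $\langle v_1,v_2\rangle$) would propagate into a wrong final inequality. A secondary subtlety is that the numerical input from Lemma \ref{lem:caso3} only gives $d\geq\tfrac32 k$ after one has shown that the relevant restrictions contribute to the Clifford index, so I must be careful that the three line bundles $N,N_1,N_2$ here are genuinely in the configuration \eqref{mami}--\eqref{pa} covered by that lemma; the setup of this section (with $N_1=L\otimes(N\otimes N_2)^\vee$ globally generated and non-trivial, and the slope inequalities imposed) is precisely arranged so that Lemma \ref{lem:caso3} applies verbatim. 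Once the constant $\tfrac32-\alpha$ is pinned down, the statement is immediate; combining it in the next step with $\alpha\geq 2$ (or $\alpha\geq 1$ in the single borderline case) and with the fact that a dominating component of $\W^2_d(\vert L\vert)$ has dimension at least $g+\rho(g,2,d)$ will yield the exclusion results needed for Theorems \ref{thm:magari} and \ref{thm:principale}.
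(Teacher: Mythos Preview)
Your plan is correct and follows essentially the same route as the paper's proof: compute $\dim\G=\dim Z+3(g-d+2)$ from Proposition~\ref{prop:sedici}, expand the Mukai pairings, use the relation $d=l_1+l_2+c_1(N)\cdot c_1(N_1)+c_1(N)\cdot c_1(N_2)+c_1(N_1)\cdot c_1(N_2)$ to cancel the $l_i$, apply the bound \eqref{sese} from Lemma~\ref{lem:caso3}, and finish with the $-1$--dimensional fibers of $h_{N,N_2,l_2}$. The only cosmetic difference is that the paper does the algebra directly, arriving at $\dim\G=g+d+2-\alpha-\sum c_1(N_i)\cdot c_1(N_j)$ without ever passing through $\rho(g,2,d)$, whereas you propose the detour via the decomposition of $\langle v(E),v(E)\rangle$ as in Lemmas~\ref{lem:occhio} and~\ref{lem:fontana}; either works, though the direct expansion is shorter. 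One caution: the formula you quote for $\langle v_i,v\rangle$ is the rank-$2$/rank-$1$ version from Section~\ref{mare}; here all three pieces have rank $1$, so the correct pairing is $\langle v_i,v\rangle=c_1(N)\cdot c_1(N_i)-\chi(N)-\chi(N_i\otimes I_{\xi_i})+2$, and getting this right is exactly the bookkeeping hazard you anticipated.
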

\begin{proof}
Let $\G$ be an irreducible component of $\G_{N,N_2,l_2}$ such that $\W=\overline{h_{N,N_2,l_2}(\G)}$. Since $\G=\psi^{-1}(Z)$ for some irreducible component $Z$ of $P_{N,N_2,l_2}$, 
Proposition \ref{prop:sedici} implies that:
\begin{eqnarray*}
\dim\G&=&3(g-d+2)+\dim Z\\
&=&3(g-d)+12-\alpha-2\chi(E)+2l_1+2l_2+\\
& &c_1(N)\cdot c_1(N_1)+c_1(N)\cdot c_1(N_2)+c_1(N_1)\cdot c_1(N_2)\\
&=&g-d+2-\alpha+2(l_1+l_2)+c_1(N)\cdot (c_1(N_1)+c_1(N_2))+c_1(N_1)\cdot c_1(N_2)\\
&=&g+d+2-\alpha-c_1(N)\cdot c_1(N_1)-c_1(N)\cdot c_1(N_2)-c_1(N_1)\cdot c_1(N_2)\\
&\leq&g+d+2-\alpha-\frac{3}{2}k\\
&\leq&\frac{1}{4}g+d+\frac{1}{2}-\alpha,
\end{eqnarray*}
where we have used Lemma \ref{lem:caso3} and the fact that $k\geq (g+2)/2$. 
The statement follows since the fibers of $h_{N,N_2,l_2}$ are quotient stacks of dimension $-1$.
\end{proof}
Finally, we prove the following:
\begin{prop}\label{prop:nutella}
Assume that general curves in $\vert L\vert$ have Clifford dimension $1$ and maximal gonality $k=\left\lfloor \frac{g+3}{2}\right\rfloor$. Fix a positive integer $d$ such that $(g,d)\not\in\{(2,4),(4,5),(6,6),(10,9)\}$. Let $\W\subset \W_{N,N_2,l_2}$ be an irreducible component of $\W^2_d(\vert L\vert)$. Then $\rho(g,2,d)\geq 0$ and $\W$ does not dominate $\vert L\vert$.
\end{prop}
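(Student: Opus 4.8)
The plan is to compare the dimension estimate of Lemma~\ref{lem:marta} with the standard lower bound $\dim\W\ge g+\rho(g,2,d)$ enjoyed by any dominating component of $\W^2_d(\vert L\vert)$.

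First I would establish $\rho(g,2,d)\ge 0$. Since $\W$ is a nonempty component of $\W^2_d(\vert L\vert)$ contained in $\W_{N,N_2,l_2}$, the Grassmann bundle $\G_{N,N_2,l_2}$ is nonempty, hence a rank-$3$ LM bundle $E$ of one of the types (\ref{one})--(\ref{four}) exists; applying Lemma~\ref{lem:caso3} to $E$ gives $d\ge\frac32 k$. As $k=\left\lfloor\frac{g+3}{2}\right\rfloor\ge\frac{g+2}{2}$, this yields $\rho(g,2,d)=3d-2g-6\ge\frac{g-6}{4}$, which is nonnegative as soon as $g\ge 6$. For $g\le 5$ only finitely many values of $d$ satisfy $d\ge\frac32 k$, and one checks them by hand: for $g\in\{3,4,5\}$ one has $\rho(g,2,d)\ge 0$ automatically, while the only remaining pair $(g,d)=(2,3)$ is vacuous since $W^2_3(C)=\emptyset$ by Riemann--Roch.

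Next I would show that $\W$ does not dominate $\vert L\vert$. By Lemma~\ref{lem:marta} and the lower bound recalled above, it is enough to verify
\[
\tfrac14 g+d+\tfrac32-\alpha<3d-g-6,\qquad\text{equivalently}\qquad 2d>\tfrac54 g+\tfrac{15}{2}-\alpha,
\]
where $\alpha\in\{1,2,3\}$ is the quantity of Proposition~\ref{prop:sedici}. Inserting $2d\ge 3k\ge\frac{3(g+2)}{2}$ (and $\frac{3(g+3)}{2}$ when $g$ is odd), this inequality holds in all cases apart from a short explicit list of small pairs $(g,d)$. For the generic value $\alpha=3$ it can fail only for small even $g$ with $d$ close to $\frac32 k$; for $\alpha=2$ the extra slack leaves only a handful of exceptions; and $\alpha=1$ occurs only when $N\simeq N_1\simeq N_2$, which forces $c_1(N)^2=\frac{2g-2}{9}$, hence $9\mid 2(g-1)$, hence $g\equiv 1\pmod 9$, so the small-genus check reduces to $g=10$. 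Going through these finitely many possibilities, the pairs for which the displayed inequality degenerates to an equality turn out to be exactly $(2,4)$, $(4,5)$, $(6,6)$, $(10,9)$, all excluded by hypothesis.

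The main obstacle is precisely this boundary bookkeeping: the estimate of Lemma~\ref{lem:marta} is tight only up to lower-order terms, so near $d\approx\frac32 k$ with $g$ small it yields $\dim\W\le g+\rho(g,2,d)$ rather than a strict inequality. One must identify that the degenerate cases are exactly the four excluded pairs --- using, in the $\alpha=1$ case, the divisibility constraint $g\equiv 1\pmod 9$ coming from $N\simeq N_1\simeq N_2$ --- while everything else is routine arithmetic built on Lemmas~\ref{lem:caso3} and~\ref{lem:marta}.
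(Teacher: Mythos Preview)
Your argument is correct and follows essentially the same route as the paper: both use Lemma~\ref{lem:caso3} to get $d\ge\frac32 k$ (hence $\rho\ge 0$), feed Lemma~\ref{lem:marta} into the inequality $\dim\W<g+\rho(g,2,d)$, and then eliminate the resulting finite list of borderline pairs using constraints on~$\alpha$. The only noteworthy variation is in how you exclude $\alpha=1$: you exploit the lattice identity $L^2=9\,c_1(N)^2$ when $N\simeq N_1\simeq N_2$ to force $g\equiv 1\pmod 9$, whereas the paper observes that $\alpha=1$ forces $E$ to be of type~(\ref{two}) and then uses $\gcd(3,2g-2)=1$ to rule out proper $\mu_L$-semistability---your criterion is strictly sharper but leads to the same four exceptional pairs.
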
 
\begin{proof}
Lemma \ref{lem:caso3} implies $d\geq\frac{3}{2}k$, hence $\rho(g,2,d)\geq0$. Lemma \ref{lem:marta} gives:
$$
\dim\W\leq \frac{1}{4}g+d+\frac{3}{2}-\alpha.
$$
Therefore, $\W$ cannot dominate $\vert L\vert$ if $$\frac{1}{4}g+d+\frac{3}{2}-\alpha< g+\rho(g,2,d)=-g+3d-6,$$ that is, $d>\frac{5}{8}g+\frac{15}{4}-\frac{\alpha}{2}$. In particular, as $\alpha\geq1$, it is enough to require $d>\frac{5}{8}g+\frac{13}{4}=:h$. Such inequality is satisfied always except for
$$
(g,d)\in\{(2,4),(3,5),(4,5),(5,6),(6,6),(6,7),(8,8),(10,9),(14,12)\}.
$$
If $(g,d)=(6,6)$, the linear system $\vert L\vert$ can be dominated by $\W$.
In all the other cases $d=\lfloor h\rfloor$ and we check whether $\alpha>2h-2\left\lfloor h\right\rfloor+1$, which would prevent $\W$ from being dominant. This holds true if $(g,d)\not\in\{(2,4),(4,5),(10,9)\}$ (use that the case $\alpha=1$ may occur only when parametrizing LM bundles of type (\ref{two}) and that, if $\gcd(2g-2,3)=1$, there do not exist properly $\mu_L$-semistable bundles of Mukai vector $v(E)$). 
\end{proof}
\begin{rem}
The four cases which are not covered by Proposition \ref{prop:nutella} might be treated by  "ad hoc" arguments but this is not the purpose of the paper.
\end{rem}
Proofs of Theorem \ref{thm:magari} and Theorem \ref{thm:principale} are now straightforward.
\begin{proof}[Proof of Theorem \ref{thm:magari}]
Being non-simple, the LM bundle $E_{C,A}$ is not $\mu_L$-stable. Since $d<\frac{2}{3}g+2$, Corollary \ref{cor:lasagna} implies the existence of a line bundle $N\in\Pic(S)$ such that $E_{C,A}\in R_N(\mathbb{C})$. The statement thus follows directly from Proposition \ref{prop:vai}.
\end{proof}

\begin{proof}[Proof of Theorem \ref{thm:principale}]\
Case (\ref{aa}) trivially follows from Proposition \ref{prop:cambridge}, Proposition \ref{prop:fiducia} and Proposition \ref{prop:nutella}. 

Now, let $\frac{2}{3}g+2\leq d\leq \frac{3}{4}g+2$. Given $\W$ an irreducible component of $\W^2_d(\vert L\vert)$ which dominates $\vert L\vert$ and whose general point corresponds to a LM bundle that is not $\mu_L$-stable, Proposition \ref{prop:cambridge} and Proposition \ref{prop:nutella} imply the existence of a line bundle $N\in\Pic(S)$ such that $\W\subset \W_N$. The statement follows from Proposition \ref{prop:vai}.
\end{proof}

\section{Transversality of some Brill-Noether loci}\label{bus}
We apply our results in order to prove Theorem \ref{thm:tra} in the introduction. 
\begin{thm}\label{thm:gon}
Let $r\geq 3$, $g\geq 0$, $d\leq g-1$ be positive integers such that $\rho(g,r,d)<0$ and $d-2r+2\geq \lfloor(g+3)/2\rfloor$. If $r\geq 4$, assume $d^2>4(r-1)(g+r-2)$. For $r=3$, let $d^2>8g+1$. If $-1$ is not represented by the quadratic form
$$
Q(m,n)=(r-1)m^2+mnd+(g-1)n^2\,\,\,m,n\in\mathbb{Z},
$$
there exists a smooth curve $C\subset \mathbb{P}^r$ of genus $g$, degree $d$ and maximal gonality $\left\lfloor \frac{g+3}{2}\right\rfloor$. Moreover, one can choose $C$ such that for any complete, base point free $g^1_{e}$ on $C$ with $\rho(g,1,e)\geq 0$ the Petri map is injective.
\end{thm}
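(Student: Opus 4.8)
The plan is to realize $C$ as a curve on a $K3$ surface $S$ of Picard number $2$, so that the results of Sections \ref{background1}--\ref{bistrot} apply to its linear system, and then to deduce the gonality and Gieseker--Petri statements from the arithmetic of $\Pic(S)$. Concretely, let $\Lambda$ be the rank-$2$ lattice with basis $h,c$ and $h^2=2r-2$, $h\cdot c=d$, $c^2=2g-2$, so that $Q(m,n)=\tfrac12(mh+nc)^2$; its discriminant $4(r-1)(g-1)-d^2$ is negative (a consequence of both numerical hypotheses), hence $\Lambda$ has signature $(1,1)$, and by surjectivity of the period map there is a projective $K3$ surface $S$ with $\Pic(S)\cong\Lambda$. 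Since $Q$ does not represent $-1$, the lattice $\Lambda$ contains no $(-2)$-class, so $S$ has no $(-2)$-curves; consequently every class of positive self-intersection in the positive cone is ample, and the ampleness, base-point-freeness and $h^1$-vanishing assertions I shall use reduce to numerical checks. After acting by $\pm O(\Lambda)$ one may take $H:=h$ and $L:=c$ to be ample, with $h^0(S,H)=2+H^2/2=r+1$, $h^1(S,H)=0$, and $|H|$ embedding $S$ in $\mathbb{P}^r$ as a surface of degree $2r-2$; this last point is where $r\ge 3$ and the sharper numerical hypotheses enter, in order to exclude the elliptic and hyperelliptic exceptions in Saint-Donat's very ampleness criterion.

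Next I would take $C\in|L|_s$ general; by Bertini it is smooth, irreducible, of genus $g$. From $0\to\oo_S(H-C)\to\oo_S(H)\to H\otimes\oo_C\to 0$, Serre duality, the inequality $(H-C)\cdot H=2r-2-d<0$ (so $H-C$ is not effective, using $d\ge 2r$, which follows from $d-2r+2\ge\lfloor(g+3)/2\rfloor$) and the vanishing $h^1(S,C-H)=0$, one obtains $h^0(C,H\otimes\oo_C)=r+1$; hence $H\otimes\oo_C$ embeds $C$ in $\mathbb{P}^r$ with degree $d$ and genus $g$, and $\rho(g,r,d)<0$ by hypothesis. By \cite{ciliberto} Proposition 3.3 the ampleness of $L$ forces $C$ to have Clifford dimension $1$, the genus-$10$ exception being excluded by the hypotheses. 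It then remains to show that a general $C\in|L|_s$ has maximal gonality $k=\lfloor(g+3)/2\rfloor$; since gonality and Clifford index are constant in $|L|_s$ (\cite{green}), it is enough to bound them below. Any $g^1_e$ on $C$ with $e<k$ has $\rho(g,1,e)<0$, so by the Donagi--Morrison theorem for pencils (\cite{donagi}, Theorem (5.1')) it is contained in $|M\otimes\oo_C|$ for some $M\in\Pic(S)=\Lambda$ adapted to $|L|$, with $\Cliff(M\otimes\oo_C)\le e-2$; writing $M=aH+bL$ and feeding the adaptedness conditions $h^0(S,M)\ge 2$, $h^0(S,L-M)\ge 2$, the Clifford bound, and the Hodge index inequality into the intersection form of $\Lambda$, one confines $(a,b)$ to a short explicit list and checks that in each case one of $M$, $L-M$, $2M-L$ must have self-intersection $-2$, contradicting the hypothesis on $Q$. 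This lattice bookkeeping, which is exactly where the inequalities $d^2>8g+1$ (for $r=3$) and $d^2>4(r-1)(g+r-2)$ (for $r\ge 4$) are genuinely used, is what I expect to be the main obstacle.

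Finally, for the Gieseker--Petri property: the pair $(S,L)$ now satisfies the hypotheses of Theorem \ref{thm:stable}, so for a general $C\in|L|_s$ and every $e$ with $\rho(g,1,e)>0$, any complete base-point-free $g^1_e$ on $C$ has $\mu_L$-stable, hence simple, Lazarsfeld--Mukai bundle, which by the discussion in Section \ref{background1} is equivalent to injectivity of $\mu_{0,A}$. The only values of $e$ with $\rho(g,1,e)\ge 0$ not covered by this are $e=k$ with $g$ even, so that $\rho(g,1,k)=0$. Here $E:=E_{C,A}$ is rigid, with $\langle v(E),v(E)\rangle=-2$, and if $E$ is not $\mu_L$-stable then Lemma \ref{lem:uova} together with $d=c_1(M)\cdot c_1(N)+l(\xi)=k$ forces $l(\xi)=0$ and a non-split extension $0\to M\to E\to N\to 0$ of line bundles with $c_1(M)\cdot c_1(L)+c_1(N)\cdot c_1(L)=2g-2$. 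If $E$ is properly $\mu_L$-semistable one has $c_1(M)\cdot c_1(L)=c_1(N)\cdot c_1(L)=g-1$, so $c_1(M-N)$ is orthogonal to the ample class $c_1(L)$; as $M\not\simeq N$ (because $g$ is even), this gives $h^0(S,M-N)=h^0(S,N-M)=0$, whence a short diagram chase yields $\Hom(E,E)=\mathbb{C}$, so $E$ is simple and $\mu_{0,A}$ is injective. The remaining possibility --- that $E$ be $\mu_L$-unstable --- is the second delicate point: one computes $c_1(M-N)^2=-6$ and must exclude it using the arithmetic of $\Lambda$ forced by the hypotheses, if necessary together with a refinement of the parameter count of Proposition \ref{prop:no} showing that the corresponding locus cannot dominate $|L|$. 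Choosing $C$ general in $|L|_s$ then produces the required curve.
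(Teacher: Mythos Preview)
Your overall strategy---constructing a $K3$ surface $S$ with $\Pic(S)=\Z H\oplus\Z C$, using the hypothesis on $Q$ to exclude $(-2)$-curves, deducing ampleness of $L$, and then invoking Donagi--Morrison for the gonality---coincides with the paper's. The lattice arithmetic for the gonality bound, which you flag as ``the main obstacle'', is carried out in the paper in full detail (following \cite{gabi1}): one minimizes $f(m,n)=D\cdot(C-D)$ subject to the effectivity and degree constraints, and the numerical hypotheses on $d$ are what make the case analysis close. Your proposed shortcut via forcing some class to have square $-2$ is not how the paper proceeds and would need justification.

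The genuine gap is in your Gieseker--Petri argument. First, invoking Theorem~\ref{thm:stable} for $\rho(g,1,e)>0$ does not give what you claim: that theorem only asserts that \emph{dominating components} of $\W^1_e(|L|)$ correspond generically to $\mu_L$-stable bundles, hence that $W^1_e(C)$ is reduced of the expected dimension; it does not say that \emph{every} complete base-point-free $g^1_e$ on a general $C$ has stable (or even simple) Lazarsfeld--Mukai bundle. Second, in your $\rho=0$ analysis the assertion that the extension $0\to M\to E\to N\to 0$ is non-split is unjustified, and you leave the $\mu_L$-unstable case unresolved.

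The paper handles all $e$ with $\rho(g,1,e)\geq 0$ by a single uniform argument that bypasses these issues. Suppose $\ker\mu_{0,A}\neq 0$ for some complete base-point-free $g^1_e$ on a general $C$; then $E=E_{C,A}$ is non-simple, hence not $\mu_L$-stable, and sits in $0\to M\to E\to N\otimes I_\xi\to 0$ with $\mu_L(M)\geq\mu_L(N)$. If this sequence does not split, then $h^0(S,E\otimes E^\vee)\geq 2$ forces $\Hom(N\otimes I_\xi,M)\neq 0$ (since $\Hom(M,N\otimes I_\xi)=0$ by slope), so $M\otimes N^\vee$ is a non-trivial \emph{effective} line bundle. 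Because $S$ has no $(-2)$-curves, every effective divisor on $S$ has non-negative self-intersection; thus $(c_1(M)-c_1(N))^2=2g-2-4\,c_1(M)\cdot c_1(N)\geq 0$, contradicting Lemma~\ref{lem:uova}, which gives $c_1(M)\cdot c_1(N)\geq k\geq(g+2)/2$. Hence the sequence splits, $\xi=\emptyset$, and $E=M\oplus N$. Finally, for each of the finitely many such pairs $(M,N)$ in $\Pic(S)$, the map $G(2,H^0(S,E))\dashrightarrow|L|$ factors through $\W^1_e(|L|)$ with positive-dimensional fibres, while $\dim G(2,H^0(S,E))=2(g-e+1)\leq g$; so these split bundles do not dominate $|L|$, and for general $C$ no such $A$ exists. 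The step you are missing is precisely the effectivity of $M-N$ and its interaction with the absence of $(-2)$-curves, which replaces both your case analysis at $\rho=0$ and your over-reading of Theorem~\ref{thm:stable} at $\rho>0$.
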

\begin{proof}
Notice that the inequalities $d\leq g-1$ and $d^2>4(r-1)(g-1)$ trivially imply $d>4(r-1)$.

In order to prove the first part of the statement, we proceed as in \cite{gabi1} Theorem 3 paying special attention to our slightly different hypotheses. Rathmann's Theorem implies the existence of a $2r-2$-degree $K3$ surface $S\subset \mathbb{P}^r$ and a smooth curve $C\subset S$ of degree $d$ and genus $g$ such that $\Pic(S)=\mathbb{Z}H\oplus\mathbb{Z}C$, where $H$ is the hyperplane section of $S$. Our assumption on $Q$ implies that $S$ does not contain $(-2)$-curves. As in \cite{gabi1}, one shows that the line bundle $L:=\oo_S(C)$ is ample by Nakai-Moishezon criterion (if $D\subset S$ is an effective divisor, use that $D^2\geq 0$ and $D\cdot H>2$, in order to show that $C\cdot D>0$). Hence, $C$ has Clifford dimension $1$ (cf. \cite{ciliberto} Proposition 3.3).

Assume that $C$ has gonality $k<\left\lfloor \frac{g+3}{2}\right\rfloor$. We reach a contradiction by showing that $k\geq d-2r+2$. If $A$ is a complete, base point free pencil $g^1_k$ on $C$, by \cite{donagi} Theorem (4.2) there exists an effective divisor $D\equiv mH+nC$ on $S$, such that $\vert A\vert$ is contained in the linear system $\vert \oo_C(D)\vert$ and the following conditions are satisfied:
$$
h^0(S,\oo_S(D))\geq 2,\,\,h^0(S,\oo_S(C-D))\geq 2,\,\,C\cdot D\leq g-1,\,\, \Cliff(D\vert_C)= \Cliff(A).$$
In particular, as remarked in \cite{donagi} page 60, the last equality implies that $$h^1(S,\oo_S(D))=h^1(S,\oo_S(C-D))=0,$$ thus $c_1(D)^2>0$ and $c_1(C-D)^2>0$. Moreover, one has $$k=2+\Cliff(D\vert_C)=D\cdot (C-D).$$ We show that
$$ 
f(m,n)=D\cdot C-D^2=-(2r-2)m^2+d(1-2n)m+(n-n^2)(2g-2)\geq d-2r+2,
$$
for values of $m$ and $n$ satisfying the following inequalities:
\begin{enumerate}
\item\label{nuovo1} $(r-1)m^2+mnd+n^2(g-1)>0$,
\item\label{nein} $(r-1)m^2+(mn-m)d+(1-n)^2(g-1)>0$,
\item\label{nuovo2} $2<(2r-2)m+nd<d-2$,
\item\label{nuovo3} $md+(2n-1)(g-1)\leq 0$.
\end{enumerate}

Assume first that $n=1$, and set $a = -m$. Then (\ref{nuovo2}) implies $0 < a < (d-2)/(2r-2)$. Inequality (\ref{nuovo1}) is equivalent to $(r-1)a^2-ad+g-2\geq 0$, whence $$a\leq\frac{d-\sqrt{d^2-4(r-1)(g-2)}}{2r-2}.$$ We have $f(-a,1)\geq d-2r+2$ whenever $1\leq a\leq d/(2r-2)-1$. For either $r\geq 4$ or $r=3$ and $d^2-8g\geq8$, this holds true because $d^2-4(r-1)(g-2)\geq4r(r-1)>4(r-1)^2$. If $r=3$ and $d^2-8g<8$, then $d^2-8g=4$ and $d\equiv2\mod4$. Hence, (\ref{nuovo2}) implies that $1\leq a< (d-4)/4$. Remark that $f(-a,1)=d-2r+2$ whenever $a=1$, that is, $C\equiv C-H$.
The case $n=0$ can be treated similarly by using (\ref{nein}) instead of (\ref{nuovo1}), and one obtains that $f(m,0)\geq d-2r+2$ with equality holding only for $m=1$, that is, $D\equiv H$.

If $n<0$, inequalities (\ref{nuovo1}), (\ref{nuovo2}) and (\ref{nuovo3}) imply that $-\alpha n<m\leq(g-1)(1-2n)/d$, where $$\alpha=\frac{d+\sqrt{d^2-4(r-1)(g-1)}}{2r-2}.$$ Therefore, one has
$$
f(m,n)\geq \min\left\{f(-\alpha n,n),f\left(\frac{(g-1)(1-2n)}{d},n\right)\right\}.
$$
Analogously, if $n\geq 2$, then $\max\{-\beta n, (2-nd)/(2r-2)\}<m\leq(g-1)(1-2n)/d$, where
$$
\beta=\frac{d-\sqrt{d^2-4(r-1)(g-1)}}{2r-2};
$$ 
this gives
$$
f(m,n)\geq \min\left\{f\left(\frac{(g-1)(1-2n)}{d},n\right), \max\left\{f(-\beta n,n),f\left(\frac{2-nd}{2r-2},n\right)\right\}\right\}.
$$
Computations in \cite{gabi1} give $\max\left\{f(-\beta n,n),f\left((2-nd)/(2r-2),n\right)\right\}>d-2r+2$ if $n\geq 2$, and $f(-\alpha n,n)>d-2r+2$ when $n<0$, unless $r=3$, $n=-1$ and $d^2-8g=4$. In this case, $d\equiv 2\mod 4$ and $m\geq (d+4)/4$ by (\ref{nuovo2}); one uses that $f((d+4)/4,-1)>d-4$.
In order to conclude the proof that $C$ has maximal gonality, it is enough to remark that the function $$h(n):=f\left(\frac{(g-1)(1-2n)}{d},n\right)=\frac{g-1}{2}\left[\frac{(2n-1)^2(d^2-4(r-1)(g-1))}{d^2}+1\right]$$ reaches its minimum for $n=1/2$ and $h(0)\geq d-2r+2$ by direct computation.

Concerning the last part of the statement, assume $C$ is general in its linear system and let $A$ be a complete, base point free pencil $g^1_{e}$ on $C$ such that $\rho(g,1,e)\geq 0$ and $\ker\mu_{0,A}\neq 0$. The bundle $E=E_{C,A}$ is non-simple, hence it cannot be $\mu_L$-stable. As a consequence, there exists a short exact sequence
\begin{equation}\label{fu}
0\to M\to E\to N\otimes I_{\xi}\to 0,
\end{equation}
where $M,N$ are line bundles, $I_{\xi}$ is the ideal sheaf of a $0$-dimensional subscheme $\xi\subset S$ and $c_1(M)\cdot C\geq \mu_L(E)=g-1\geq c_1(N)\cdot C$. If sequence (\ref{fu}) does not split, then
$$
h^0(S,E\otimes E^\vee)\leq 1+\dim\Hom(M,N\otimes I_\xi)+\dim\Hom(N\otimes I_\xi,M).
$$
Since $\mu_L(M)\geq \mu_L(N)$, if $\Hom(M,N\otimes I_\xi)\neq 0$ then $M\simeq N$ and $C=2c_1(M)$, which is absurd. It follows that $N^\vee\otimes M$ is non-trivial and effective. Since $S$ does not contain $(-2)$-curves, one has 
$$
c_1(N^\vee\otimes M)^2=C^2-4c_1(N)\cdot c_1(M)=2g-2-4c_1(N)\cdot c_1(M)\geq 0;$$
this contradicts Lemma \ref{lem:uova}, which states that $c_1(N)\cdot c_1(M)\geq k\geq(g+2)/2$. Thus, $\xi=\emptyset$ and sequence (\ref{fu}) splits. We have to show that, if $E=N\oplus M$ is a splitting LM bundle, the rational map $\chi:G(2,H^0(S,E))\dashrightarrow \vert L\vert$ cannot be dominant. Remark that $\chi$ factors through the rational map $h_E:G(2,H^0(S,E))\dashrightarrow \W^1_e(\vert L\vert)$, whose fiber over a point $(C,A)\in\im\, h_E$ is at least $1$-dimensional since it is isomorphic to $\mathbb{P}(\Hom(E_{C,A},\omega_C\otimes A^\vee)^\circ)$, where $\Hom(E_{C,A},\omega_C\otimes A^\vee)^\circ$ is an open subgroup of $\Hom(E_{C,A},\omega_C\otimes A^\vee)\simeq H^0(S,E_{C,A}\otimes E_{C,A}^\vee)$. This is enough to conclude because $\rho(g,1,e)\geq 0$, hence $\dim G(2,H^0(S,E))=2(g-e+1)\leq g$.
\end{proof}

\begin{thm}\label{thm:pappa}
Let $g,r,d$ satisfy the hypotheses of Theorem \ref{thm:gon}. The curve $C$ can be chosen such that, if $$e< \min\left\{d-2r+5,\frac{17}{24}g+\frac{23}{12}\right\},$$ then $C$ does not have any complete, base point free net $g^2_{e}$ for which the Petri map is non-injective.
\end{thm}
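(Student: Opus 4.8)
The plan is to argue by contradiction, using the structure theorem for non-$\mu_L$-stable rank-$3$ Lazarsfeld--Mukai bundles proved in Sections \ref{tempo}--\ref{spiaggia}. Suppose the curve $C$ of Theorem \ref{thm:gon} (so $C\subset S$, $\Pic(S)=\Z H\oplus\Z C$, $S$ has no $(-2)$-curve, $H^2=2r-2$, $H\cdot C=d$, $C^2=2g-2$), chosen general in its linear system $\vert L\vert$, carries a complete, base point free net $A\in W^2_e(C)\setminus W^3_e(C)$ with $\ker\mu_{0,A}\neq 0$. By the Sard's Lemma argument of Section \ref{background1}, applied to $\pi$ on the component of $\W^2_e(\vert L\vert)$ through $(C,A)$ (which dominates $\vert L\vert$, since a general curve carries such a net), the sequence (\ref{nuvole}) is exact on global sections; hence $h^0(S,E_{C,A}\otimes F_{C,A})=1+\dim\ker\mu_{0,A}\geq 2$, and since $F_{C,A}=E_{C,A}^\vee$ the bundle $E:=E_{C,A}$ is non-simple, so not $\mu_L$-stable. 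The hypotheses of Theorem \ref{thm:gon} force $2r-2+\lfloor(g+3)/2\rfloor\leq d\leq g-1$, whence $g\geq 4r\geq 12$; in particular $(g,e)\neq(6,6)$, and one checks directly that $e<\tfrac{17}{24}g+\tfrac{23}{12}\leq\tfrac{3}{4}k+\tfrac{7}{6}+\tfrac{g}{3}$ and $e<\tfrac{5g+13}{6}$. Thus Corollary \ref{cor:lasagna} applies and, together with Proposition \ref{prop:vai}, yields $N\in\Pic(S)$ with $E$ fitting in an extension (\ref{rango1}) such that, writing $M:=L\otimes N^\vee=\oo_S(D)$: $M$ is adapted to $\vert L\vert$, $\vert A\vert\subseteq\vert M\otimes\oo_C\vert$, $\Cliff(M\otimes\oo_C)\leq e-4$, $c_1(M)\cdot C=D\cdot C\leq(4g-4)/3$, and (from the proof of Proposition \ref{prop:vai}) $D^2\geq 2$, $h^1(S,M)=0$, $c_2(E/N)\geq D^2/4+3/2$, and $\Cliff(M\otimes\oo_C)\leq e-c_2(E/N)-2$.

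Next I would turn the Clifford index of $M\otimes\oo_C$ into an intersection number on $S$. Since $h^1(S,M)=0$ and $\oo_S(D-C)=N^\vee$ with $N$ effective and non-trivial, restriction to $C$ gives $h^0(C,M\otimes\oo_C)=h^0(S,M)+h^1(S,\oo_S(C-D))=2+D^2/2+h^1(S,\oo_S(C-D))$, so $\Cliff(M\otimes\oo_C)=D\cdot(C-D)-2-2h^1(S,\oo_S(C-D))$. I claim $h^1(S,\oo_S(C-D))=0$. The divisor $C-D$ is effective, hence nef because $S$ has no $(-2)$-curve; if $(C-D)^2>0$ it is moreover big and $h^1$ vanishes by Mumford vanishing. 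If $(C-D)^2=0$, then $N\equiv jF$ for the class $F$ of an elliptic pencil and some $j\geq 1$, and $D\cdot F=(C-jF)\cdot F=C\cdot F\geq k$, since restricting $\vert F\vert$ to $C$ yields a $g^1_{C\cdot F}$ and $C$ has gonality $k$. If $j\geq 2$, then $h^1(N)=j-1$ and $\Cliff(M\otimes\oo_C)=j(C\cdot F)-2-2(j-1)=j(C\cdot F-2)\geq 2(k-2)\geq g-2$, contradicting $\Cliff(M\otimes\oo_C)\leq e-4<d-2r+1\leq g-6$. If $j=1$, then already $h^1(N)=0$; moreover $c_1(E/N)=C-F$ and $e=c_2(E)=F\cdot(C-F)+c_2(E/N)=C\cdot F+c_2(E/N)$, while $\mu_L$-stability of $E/N$ (via $\langle v(E/N),v(E/N)\rangle\geq-2$) gives $c_2(E/N)\geq(g+2-C\cdot F)/2$, hence $C\cdot F\leq 2e-g-2$; combined with $C\cdot F=\mu_L(N)\geq(2g-2)/3$ this forces $e\geq(5g+4)/6>\tfrac{17}{24}g+\tfrac{23}{12}$ (here $g\geq 12$), a contradiction. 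Therefore $\Cliff(M\otimes\oo_C)=D\cdot(C-D)-2$.

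The arithmetic core is then the following. Writing $D\equiv mH+nC$, one has $D^2=2Q(m,n)$, $(C-D)^2=2Q(m,n-1)$ and $D\cdot(C-D)=f(m,n):=-(2r-2)m^2+d(1-2n)m+2(g-1)n(1-n)$, with $Q$ and $f$ exactly the forms occurring in the proof of Theorem \ref{thm:gon}. The constraints collected above read $Q(m,n)\geq 1$, $Q(m,n-1)\geq 1$, $D\cdot C=md+2n(g-1)\leq(4g-4)/3$, and $3\leq H\cdot D\leq d-3$ (as $D$ and $C-D$ have no component of degree $\leq 2$, there being no line or conic on $S$). I would rerun the case analysis of Theorem \ref{thm:gon} (according to $n=0,1$, $n<0$, $n\geq 2$) with the weaker bound $D\cdot C\leq(4g-4)/3$ in place of $D\cdot C\leq g-1$; the feasible interval for $m$ is only mildly enlarged, and evaluating the downward parabola $m\mapsto f(m,n)$ at the new endpoints still gives $f(m,n)\geq d-2r+2$, with equality only for $D\equiv H$ or $D\equiv C-H$. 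I expect this verification --- confirming $f(m,n)\geq d-2r+2$ over the enlarged parameter region and isolating the equality locus --- to be the main obstacle.

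Finally I would eliminate the two equality cases. If $D\cdot(C-D)=d-2r+2$, then $\Cliff(M\otimes\oo_C)=d-2r$, so $\Cliff(M\otimes\oo_C)\leq e-c_2(E/N)-2$ gives $c_2(E/N)\leq e-d+2r-2\leq 2$ since $e<d-2r+5$. If $D\equiv H$, then $c_1(E/N)=H$ and the bound $c_2(E/N)\geq c_1(E/N)^2/4+3/2=(r+2)/2>2$ coming from $\mu_L$-stability contradicts $c_2(E/N)\leq 2$ for $r\geq 3$. If $D\equiv C-H$, then $c_1(E/N)=C-H$ and $c_2(E/N)\geq(C-H)^2/4+3/2=(g-d+r+1)/2$, which together with $c_2(E/N)\leq 2$ forces $g\leq d-r+3\leq g+2-r$, again impossible for $r\geq 3$. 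Hence $D\cdot(C-D)\geq d-2r+3$, so $\Cliff(M\otimes\oo_C)=D\cdot(C-D)-2\geq d-2r+1$, contradicting $\Cliff(M\otimes\oo_C)\leq e-4$ because $e<d-2r+5$. This contradiction completes the proof.
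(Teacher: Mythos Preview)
Your proposal is correct and follows essentially the same route as the paper's proof: reduce to the extension (\ref{rango1}) via Corollary \ref{cor:lasagna} and Proposition \ref{prop:vai}, eliminate the case $(C-D)^2=0$, convert $\Cliff(M\otimes\oo_C)$ into $D\cdot(C-D)-2$, and then rerun the quadratic analysis of $f(m,n)$ from Theorem \ref{thm:gon} with the weaker constraint $D\cdot C\leq(4g-4)/3$.

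A few minor remarks on the differences. Your treatment of $(C-D)^2=0$ via the elliptic pencil $F$ and the split into $j=1$ and $j\geq 2$ is more elaborate than necessary: the paper disposes of this case in one line by computing (exactly as in your $j=1$ branch) that it would force $e\geq(5g+4)/6$, which already exceeds $\tfrac{17}{24}g+\tfrac{23}{12}$ once $g\geq 12$; the $j\geq 2$ detour is not needed. For the equality cases $D\equiv H$ and $D\equiv C-H$, the paper argues slightly differently: it observes that equality in $e-4\geq D\cdot(C-D)-2$ forces both $D^2=2$ and $c_2(E/N)=2$, so $D\equiv H$ is immediately ruled out since $H^2=2r-2\geq 4$, and $D\equiv C-H$ with $(C-H)^2=2$ gives $d=g+r-3\geq g$. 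Your version, bounding $c_2(E/N)\leq 2$ and then contradicting the stability bound $c_2(E/N)\geq c_1(E/N)^2/4+3/2$, is an equally valid variant. Finally, the arithmetic verification you flag as ``the main obstacle'' is indeed where the paper does the real work: it writes $g(n):=f\bigl((g-1)(\tfrac{4}{3}-2n)/d,\,n\bigr)-(d-2r+2)$ as a quadratic in $n$ with positive leading coefficient and vertex in $(0,1)$, and checks $g(0)>0$ directly from the numerical hypotheses on $d,g,r$.
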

\begin{proof}
Let $S\subset \mathbb{P}^r$ be as in the proof of Theorem \ref{thm:gon} and $C$ be general in its linear system. Let $A$ be a complete, base point free net on $C$ of degree $d_A< \frac{17}{24}g+\frac{23}{12}$; if $\rho(g,2,d_A)\geq 0$, assume moreover that $\ker\mu_{0,A}\neq 0$. Corollary \ref{cor:lasagna} and Proposition \ref{prop:vai} imply that $\vert A\vert$ is contained in the linear system $\vert\oo_C(D)\vert$ for some effective divisor $D\equiv mH+nC$ on $S$ such that:
$$
h^0(S,\oo_S(D))\geq 2,\,\,h^0(S,\oo_S(C-D))\geq 2,\,\,C\cdot D\leq \frac{4g-4}{3},\,\, \Cliff(D\vert_C)\leq \Cliff(A).$$
In fact, the Lazarsfeld-Mukai bundle $E:=E_{C,A}$ is given by an extension:
$$
0\to N\to E\to E/N\to 0,
$$
where $N:=\oo_S(C-D)$ and $E/N$ is a $\mu_L$-stable torsion free sheaf of rank $2$ on $S$. As in the proof of Proposition \ref{prop:vai}, one shows that $D^2>0$, hence $h^1(S,\oo_S(D))=0$. Moreover, one obtains that $h^1(S,N)=0$ because the equality $(C-D)^2=0$ would imply $d\geq (5g+4)/6$, which is absurd. 
As a consequence, one has
\begin{equation}\label{cara}
d_A-4 =\Cliff(A)\geq \Cliff(D\vert_C)=D\cdot C-2h^0(S,\oo_C(D\vert_C))+2=D\cdot (C-D)-2,
\end{equation}
and equality holds only if $D^2=2$ and $c_2(E/N)=2$ (cf. proof of Proposition \ref{prop:vai}); in particular, for $D\equiv H$, the inequality is strict. We show that 
\begin{equation}\label{semi}
f(m,n):=D\cdot (C-D)\geq d-2r+2,
\end{equation}
and, if equality holds, then either $D\equiv H$ or $D\equiv C-H$. Computations are similar to those in Theorem \ref{thm:gon}, but now, instead of having $D\cdot C\leq g-1$, we only know that $D\cdot C\leq(4g-4)/3$. Therefore, inequality (\ref{nuovo3}) must be replaced with
\begin{itemize} 
\item[(iv')]\label{bla} $md+(2n-\frac{4}{3})(g-1)\leq 0$.
\end{itemize}
The cases $n\in\{0,1\}$ can be treated exactly as before. For $n<0$, we have 
$$
f(m,n)\geq \min\left\{f(-\alpha n,n),f\left(\frac{(g-1)(\frac{4}{3}-2n)}{d},n\right)\right\}.
$$
If $n\geq 2$, then
$$
f(m,n)\geq \min\left\{f\left(\frac{(g-1)(\frac{4}{3}-2n)}{d},n\right), \max\left\{f(-\beta n,n),f\left(\frac{2-nd}{2r-2},n\right)\right\}\right\}.
$$
 Therefore, it is enough to show that 
$$
g(n):=f\left(\frac{(g-1)(\frac{4}{3}-2n)}{d},n\right)-d+2r-2> 0\textrm{ for }n< 0 \textrm{ or }n\geq 2.
$$
One can write $g(n)=an^2+bn+c$, with
\begin{eqnarray*}
a&=&-4(2r-2)\left(\frac{g-1}{d}\right)^2+2d\left(\frac{g-1}{d}\right),\\
b&=&\frac{16}{3}(2r-2)\left(\frac{g-1}{d}\right)^2-\frac{8}{3}d\left(\frac{g-1}{d}\right),\\
c&=&-\frac{16}{9}(2r-2)\left(\frac{g-1}{d}\right)^2+\frac{4}{3}d\left(\frac{g-1}{d}\right)-d+2r-2.
\end{eqnarray*}
Since $a>0$ and $0<-b/(2a)<1$, our claim follows if $g(0)=c>0$, or equivalently, if 
$$
\frac{3}{4}<\frac{g-1}{d}<\frac{3}{8}\left(\frac{d-2(r-1)}{r-1}\right).
$$
The left inequality is trivial since $d\leq g-1$. The right inequality is equivalent to the condition $8(g-1)(r-1)<3d^2-6d(r-1)$, which is satisfied as well (if $r\geq 4$, use that $8(g-1)(r-1)<2d^2-8(r-1)^2$ and $d> 4(r-1)$; if $r=3$, use that $d^2>8g+1$ and either $(g,d)=(12,11)$ or $d\geq 12$ by manipulation of the hypotheses). 

We conclude that $d_A\geq d-2r+4$ and the inequality is strict unless equalities hold both in (\ref{cara}) and (\ref{semi}), thus $D\equiv C-H$ and $(C-H)^2=2$. This case can be excluded since it would imply $d=g+r-3\geq g$.
\end{proof}
Remark that the condition $e<\frac{17}{24}g+\frac{23}{12}$ is automatically satisfied if $\rho(g,2,e)<0$.

The proof of Theorem \ref{thm:tra} is now trivial: apply Theorem \ref{thm:gon} and Theorem \ref{thm:pappa} and proceed by induction on $f$ and $e$ in order to deal with pencils $g^1_f$ and nets $g^2_e$ which have a nonempty base locus.

\section{Noether-Lefschetz divisor and Gieseker-Petri divisor in genus 11}
The Clifford index $\Cliff(C)$ is one of the most important invariants of an algebraic curve $C$. In \cite{lange} Lange and Newstead defined the analogue of the Clifford index for higher rank vector bundles in the following way. If $\U_C(n,d)$ denotes the moduli space of semistable rank-$n$ vector bundles of degree $d$ on a genus-$g$ curve $C$, given $E\in \U_C(n,d)$, the Clifford index of $E$ is
$$
\gamma(E):=\mu(E)-\frac{2}{n}h^0(C,E)+2\geq 0,
$$
where $\mu(E)$ denotes the slope of $E$. For any positive integer $n$, one defines the higher Clifford index of $C$ 
$$
\Cliff_n(C):=\mathrm{min}\{\gamma(E)\,\vert\, E\in \U_C(n,d),\,h^0(C,E)\geq 2n,\,\mu(E)\leq g-1\}.
$$
A natural question is whether higher Clifford indices are new invariants, different from the ones arising in classical Brill-Noether theory. In \cite{lange} Lange and Newstead reformulated a conjecture of Mercat (cf. \cite{mercat}) in a slightly weaker form predicting:
\begin{equation}\label{rugby}
\Cliff_n(C)=\Cliff(C);
\end{equation}
remark that trivially $\Cliff_n(C)\leq \Cliff(C)$, while the opposite inequality is largely non-trivial.
When $n=2$, the conjecture has been proved for a general curve in $M_g$ if $g\leq 16$ by Farkas and Ortega (cf. \cite{angela}) and the same is expected to hold true in any genus. However, if $g\geq 11$, there are examples of curves with maximal Clifford index $\Cliff(C)=\left\lfloor\frac{g-1}{2}\right\rfloor$ that violate (\ref{rugby}) for $n=2$. These have been constructed in \cite{angela}, \cite{gan}, \cite{lange}, \cite{peter}, \cite{new} as sections of $K3$ surfaces with Picard number at least $2$. We recall that the $K3$ locus 
$$
\K_g:=\{[C]\in M_g\,\vert\, C\subset S,\,S\textrm{ is a }K3\textrm{ surface}\}
$$
is irreducible of dimension $19+g$ if either $g=11$ or $g\geq 13$ (cf. \cite{cil}). In particular, $\K_{11}=M_{11}$ and a general curve $[C]\in M_{11}$ lies on a unique $K3$ surface with Picard number $1$ (cf \cite{muk}). Given two positive integers $r,d$ such that $d^2>4(r-1)g$ and $d$ does not divide $2r-2$, one defines the Noether-Lefschetz divisor inside $\K_g$ as
$$
\NL^r_{g,d} := \left\{ [C] \in \K_g \, \left\vert\begin{array}{l}  C \subset S, \, S\textrm{ is a } K3 \textrm{ surface}, \,\Pic(S) \supset \Z  C \oplus \Z  H, \\
 H \textrm{ nef }, \, H^2 = 2r - 2, \, C^2 = 2g - 2, \, C\cdot H = d  \end{array}\right.\right\}.$$
In \cite{gan} it is proved that a curve $C$ of genus $11$ violates Mercat's conjecture for $n=2$ whenever $[C]\in\NL^4_{11,13}$. 

Since some of the curves exhibited in \cite{lange}, \cite{peter}, \cite{new} do not satisfy the Gieseker-Petri Theorem, Lange and Newstead asked whether $\Cliff_2(C)=\Cliff(C)$ whenever $C$ is a Petri curve (Question 4.2 in \cite{new}). We prove Theorem \ref{thm:chisa}, which gives a negative answer to this question.

Let $S\subset \mathbb{P}^4$ be a $K3$ surface such that $\Pic(S)=\Z C\oplus\Z H$, where $H$ is the hyperplane section, $H^2=6$, $C^2=20$ and $C\cdot H=13$. Denote by $L$ the line bundle $\oo_S(C)$. We show that, if $C\in\vert L\vert$ is general, then $[C]$ does not lie in the Gieseker-Petri locus $GP_{11}$. Recall that $GP_{11}$ has pure codimension $1$ in $M_{11}$ (cf. \cite{marghem}) and decomposes in the following way:
$$
GP_{11}=M^2_{11,9}\cup GP^2_{11,10}\cup\bigcup_{d=7}^{10} GP^1_{11, d},
$$
where $M^2_{11,9}$ is a Brill-Noether divisor. Therefore, proving the transversality of $\NL^4_{11,13}$ and $GP_{11}$ is equivalent to showing that in the above situation, if $C\in\vert L\vert$ is general, then $C$ has no $g^2_9$ and the varieties $G^2_{10}(C)$ and $G^1_d(C)$ for $7\leq d\leq 10$ are smooth of the expected dimension. 

We proceed as in the previous section; since the hypotheses of Theorem \ref{thm:gon} are not satisfied, explicit computations must be performed. Direct calculations imply that $S$ does not contain any $(-2)$-curve. Moreover, $C$ is an ample line bundle on $S$ by Proposition 2.1 in \cite{new}. As a consequence, $C$ has Clifford dimension $1$ (cf. Proposition 3.3 in \cite{ciliberto}) and $\Cliff(C)=5$ (cf. Proposition 3.3 in \cite{gan}). In particular, $C$ has maximal gonality $k=7$ and has no $g^2_d$ for $d\leq 8$. Hence, in order to prove that $G^2_9(C)= \emptyset$, it is enough to exclude the existence of complete, base point free $g^2_{9}$ on $C$. Similarly, the condition $[C]\not\in GP^2_{11,10}$ is equivalent to the requirement for $G^2_{10}(C)$ to be smooth of the expected dimension $\rho(11,2,10)$ in the points corresponding to complete, base point free linear series. Analogously, by induction on $d$, if the Petri map associated with any complete, base point free pencil of degree $7\leq d\leq 10$ is injective, then $[C]\not\in \cup_{d=7}^{10}GP^1_{11,d}$. 

For any $A\in G^2_9(C)$, the Petri map $\mu_{0,A}$ is non-injective for dimension reasons and the bundle $E=E_{C,A}$ is non-simple, hence it cannot be $\mu_L$-stable.  Since $$\gcd(rk\, E, c_1(E)^2)=\gcd(3,20)=1,$$ there are no properly semistable sheaves of Mukai vector $v(E)=(3,C,4)$; hence, $E$ is $\mu_L$-unstable. By Corollary \ref{cor:lasagna}, $E$ sits in the short exact sequence
\begin{equation}\label{panda}
0\to N\to E\to E/N\to 0,
\end{equation}
where $N\in\Pic(S)$ is its maximal destabilizing sheaf and the quotient $E/N$ is a $\mu_L$-stable torsion free sheaf of rank $2$. Having denoted by $D$ the first Chern class of $E/N$, Proposition \ref{prop:vai} implies that the line bundle $\oo_C(D)$ contributes to $\Cliff(C)$. Moreover, as in the proof of the aforementioned proposition, one shows that 
\begin{eqnarray}
\label{dis1}D^2&\geq& 2,\\
\label{dis2}c_2(E/N)&\geq &\frac{3}{2}+\frac{1}{4}D^2.
\end{eqnarray}
Furthermore, Lemma \ref{lem:endo1} gives
\begin{equation}\label{dis3}
c_1(N)\cdot c_1(E/N)=(C-D)\cdot D\geq k=7.
\end{equation}
Since 
$$
9=c_2(E)=c_2(E/N)+(C-D)\cdot D\geq \frac{3}{2}+\frac{1}{4}D^2+(C-D)\cdot D,
$$
the divisor $D\equiv mH+nC$ must satisfy
$$
\left\{\begin{array}{l}C\cdot D=13m+20n=9\\D^2=6m^2+20n^2+26mn=2(m+n)(3m+10n)=2\end{array}\right..
$$
One shows that this system admits no integral solution. As a consequence, a general curve in $\vert L\vert$ has no linear series of type $g^2_9$.\vspace{0.4cm}

Analogously, given a complete, base point free $A\in G^2_{10}(C)$ with $\ker\mu_{0,A}\neq 0$, the LM bundle $E=E_{C,A}$ is $\mu_L$-unstable and its maximal destabilizing sheaf is a line bundle $N$ such that $E/N$ is $\mu_L$-stable by Corollary \ref{cor:lasagna}. With the same notation as above, inequalities (\ref{dis1}), (\ref{dis2}), (\ref{dis3}) still hold true and the following cases must be considered:
$$
\begin{aligned}
(a)\,\,\left\{\begin{array}{l}C\cdot D=10\\D^2=2\\(c_2(E/N)=2)\end{array}\right.\,\,\,\,\,\,\,\,&
(b)\,\,\left\{\begin{array}{l}C\cdot D=9\\D^2=2\\ (c_2(E/N)=3)\end{array}\right.\\
(c)\,\,\left\{\begin{array}{l}C\cdot D=11\\D^2=4\\ (c_2(E/N)=3)\end{array}\right.\,\,\,\,\,\,\,\,&
(d)\,\,\left\{\begin{array}{l}C\cdot D=13\\D^2=6\\ (c_2(E/N)=3)\end{array}\right..\\
\end{aligned}
$$
These systems have no integral solutions except for (d), which is satisfied by $$(m,n)=(1,0).$$ Therefore, $N=\oo_S(C-H)$ and $v(E/N)=(2,H,2)$. Since $\langle v(E/N),v(E/N)\rangle=-2$, the sheaf $E/N$ is uniquely determined. 

By applying first $\Hom(E,-)$ and then $\Hom(-,N)$ and $\Hom(-,E/N)$ to the short exact sequence (\ref{panda}), one shows that
$$
h^0(S,E\otimes E^\vee)\leq 2+\dim\Hom(N, E/N)+\dim\Hom(E/N,N)$$
and the inequality is strict if the sequence does not split. Since $\mu_L(N)>\mu_L(E/N)$, Proposition \ref{prop:morfismi} implies that $\Hom(N,E/N)=0$. Let $0\neq \alpha\in\Hom(E/N,N)$. Since both $\im\,\alpha$ and $\ker\alpha$ are torsion free sheaves of rank $1$, there exists an effective divisor $D_1$ on $S$ and two $0$-dimensional subschemes $\xi_1,\xi_2\subset S$ such that $E/N$ is given by an extension
$$
0\to\oo_S(2H-C+D_1)\otimes I_{\xi_1}\to E/N\to \oo_S(C-H-D_1)\otimes I_{\xi_2}\to 0.$$
The $\mu_L$-stability of $E/N$ implies that $$13/2=\mu_L(E/N)<(C-H-D_1)\cdot C=-D_1\cdot C+7;$$ since $C$ has positive intersection with any non-trivial effective divisor, $D_1=0$. It follows that
$$
3=c_2(E/N)=(2H-C)\cdot (C-H)+l(\xi_1)+l(\xi_2)\geq 7,
$$
which is absurd. Hence, $\Hom(E/N,N)=0$ and (\ref{panda}) splits. As a consequence, the bundle $E=N\oplus E/N$ is uniquely determined. 

We look at the rational map $\chi:G(3,H^0(S,E))\dashrightarrow \vert L\vert$; this cannot be dominant since $\dim G(3,H^0(S,E))=9$. Therefore, a general curve $C\in\vert L\vert$ does not lie in $GP^2_{11,10}$.\vspace{0.4cm}

It remains to show that, if $C\in\vert L\vert$ is general, then $[C]\not\in \cup_{d=7}^{10}GP^1_{11,d}$. It is enough to prove that for any complete, base point free $g^1_d$ on $C$ the Petri map is injective. One can proceed exactly as in the last part of the proof of Theorem \ref{thm:gon} since $S$ does not contain $(-2)$-curves.

\end{document}